\algnewcommand{\Initialize}[1]{
  \State \textbf{Initialize:}
  \Statex \hspace*{\algorithmicindent}\parbox[t]{.8\linewidth}{\raggedright #1}
}
\algnewcommand{\Indent}[2]{
  \State {#1}
  \vspace{-2mm}
  \Statex \hspace*{\algorithmicindent}\parbox[t]{.9\linewidth}{\raggedright #2}
}
\newcommand{\vect}[1]{\boldsymbol{#1}}
\newcommand{\vectt}[1]{\boldsymbol{\mathbf{#1}}}
\newcommand{\dx}{\mathrm{d}x}
\newcommand{\dy}{\mathrm{d}y}
\newcommand{\fdx}{\frac{\mathrm{d}}{\dx}}
\newcommand{\bQ}{{\bf Q}}
\def\Q{Q^{t,(a,b,c)}}
\def\bQ{{\bf Q}^{t,(a,b,c)}}
\def\dbar{\bar\partial}
\def\XXint#1#2#3{{\setbox0=\hbox{$#1{#2#3}{\int}$}
\vcenter{\hbox{$#2#3$}}\kern-.5\wd0}}
\definecolor{deepblue}{rgb}{0,0,0.5}
\definecolor{deepred}{rgb}{0.6,0,0}
\definecolor{deepgreen}{rgb}{0,0.5,0}
\newcommand*\pFq[6][8]{%
  \begingroup 
  \pFqmuskip=#1mu\relax
  \mathchardef\normalcomma=\mathcode`,
  \mathcode`\,=\string"8000
  \begingroup\lccode`\~=`\,
  \lowercase{\endgroup\let~}\pFqcomma
  {}_{#2}F_{#3}{\left(\genfrac..{0pt}{}{#4}{#5};#6\right)}%
  \endgroup
}
\newcommand{\pFqcomma}{{\normalcomma}\mskip\pFqmuskip}
\crefname{hypothesis}{Hypothesis}{Hypotheses}
\title{Building hierarchies of semiclassical Jacobi polynomials for spectral methods in annuli \thanks{Submitted DATE.
\funding{This work was completed with the support of the EPSRC grant EP/T022132/1 ``Spectral element methods for fractional differential equations, with applications in applied analysis and medical imaging" and the Leverhulme Trust Research Project Grant RPG-2019-144 ``Constructive approximation theory on and inside algebraic curves and surfaces". IPAP was also supported by the Deutsche Forschungsgemeinschaft (DFG, German Research Foundation) under Germany's Excellence Strategy -- The Berlin Mathematics Research Center MATH+ (EXC-2046/1, project ID: 390685689).}}}
\author{Ioannis P.~A.~Papadopoulos\thanks{Department of Mathematics, Imperial College London, London, UK \newline  \hspace*{4mm} (\email{ioannis.papadopoulos13@imperial.ac.uk});}
\and Timon S.~Gutleb \thanks{Mathematical Institute, University of Oxford, UK, (\email{timon.gutleb@maths.ox.ac.uk});}
\and Richard M.~Slevinsky \thanks{Department of Mathematics, University of Manitoba,
Canada, \newline  \hspace*{4mm} (\email{Richard.Slevinsky@umanitoba.ca});}
\and Sheehan Olver \thanks{Department of Mathematics, Imperial College London, UK, (\email{s.olver@imperial.ac.uk})}.}
\def\addtab#1={#1\;&=}
\def\meeq#1{\def\ccr{\\\addtab}
 \begin{align*}
 \addtab#1
 \end{align*}
  }  
  \def\leqaddtab#1\leq{#1\;&\leq}
\def\pr(#1){\left({#1}\right)}
\def\br[#1]{\left[{#1}\right]}
\def\fbr[#1]{\!\left[{#1}\right]}
\def\ip<#1>{\left\langle{#1}\right\rangle}
\def\iip<#1>{\left\langle\!\langle{#1}\right\rangle\!\rangle}
\def\fpr(#1){\!\pr({#1})}
\def\Re{{\rm Re}\,}
\def\Im{{\rm Im}\,}
\def\floor#1{\left\lfloor#1\right\rfloor}
\def\ceil#1{\left\lceil#1\right\rceil}
\def\mapengine#1,#2.{\mapfunction{#1}\ifx\void#2\else\mapengine #2.\fi }
\def\map[#1]{\mapengine #1,\void.}
\def\mapenginesep_#1#2,#3.{\mapfunction{#2}\ifx\void#3\else#1\mapengine #3.\fi }
\def\mapsep_#1[#2]{\mapenginesep_{#1}#2,\void.}
\def\vcbr[#1]{\pr(#1)}
\def\bvect[#1,#2]{
{
\def\dots{\cdots}
\def\mapfunction##1{\ | \  ##1}
	\sopmatrix{
		 \,#1\map[#2]\,
	}
}
}
\def\vect[#1]{
{\def\dots{\ldots}
	\vcbr[{#1}]
}}
\def\vectt[#1]{
{\def\dots{\ldots}
	\vect[{#1}]^{\top}
}}
\def\Vectt[#1]{
{
\def\mapfunction##1{##1 \cr} 
\def\dots{\vdots}
	\begin{pmatrix}
		\map[#1]
	\end{pmatrix}
}}
\def\E{{\rm e}}
\def\I{{\rm i}}
\def\D{{\rm d}}
\def\dx{\D x}
\def\dy{\D y}
\def\tF_#1{{\tt F}_{#1}}
\def\tFC_#1{{\tt T}_{#1}}
\def\elllRpz_#1{\ell_{#1{\rm z}}^{(\lambda,R),p}}
\def\sopmatrix#1{\begin{pmatrix}#1\end{pmatrix}}
\def\Problem#1#2\par{\begin{problem}\label{pb:#1} #2\end{problem}}
\def\Theorem#1#2\par{\begin{theorem}\label{th:#1} #2\end{theorem}}
\def\Conjecture#1#2\par{\begin{conjecture}\label{conj:#1} #2\end{conjecture}}
\def\Proposition#1#2\par{\begin{proposition}\label{prop:#1} #2\end{proposition}}
\def\Definition#1#2\par{\begin{definition}\label{def:#1} #2\end{definition}}
\def\Corollary#1#2\par{\begin{corollary}\label{cr:#1} #2\end{corollary}}
\def\Lemma#1#2\par{\begin{lemma}\label{lm:#1} #2\end{lemma}}
\def\Example#1#2\par{\begin{example}\label{ex:#1} #2\end{example}}
\def\Remark #1\par{\begin{remark*}#1\end{remark*}}
\def\Proof{\begin{proof}}
\def\mqed{\end{proof}}
\def\Figuretwow[#1,#2]#3#4\par{
\begin{figure}[tb]
\begin{center}{
\includegraphics[width=#3]{Figures/#1}\includegraphics[width=#3]{Figures/#2}}
\end{center}
\caption{#4}\label{fig:#1} 
\end{figure}
}
\begin{document}

\maketitle

\begin{abstract}
We discuss computing with hierarchies of families of (potentially weighted) semiclassical Jacobi polynomials which arise in the construction of multivariate orthogonal polynomials. In particular, we outline how to build connection and differentiation matrices with optimal complexity and compute analysis and synthesis operations in quasi-optimal complexity. We investigate a particular application of these results to constructing orthogonal polynomials in annuli, called the generalised Zernike annular polynomials, which lead to sparse discretisations of partial differential equations. We compare against a scaled-and-shifted Chebyshev--Fourier series showing that in general the annular polynomials converge faster when approximating smooth functions and have better conditioning.  We also construct a sparse spectral element method by combining disk and annulus cells, which  is highly effective for solving PDEs with radially discontinuous variable coefficients and data.
\end{abstract}

\begin{keywords}
semiclassical orthogonal polynomials, multivariate orthogonal polynomials, spectral methods, disk, annulus
\end{keywords}

\begin{AMS}
33C45, 33C50, 65D05, 65N35
\end{AMS}

\section{Introduction}
\label{sec:introduction}

Semiclassical Jacobi polynomials are univariate polynomials orthogonal with respect to the weight $x^a (1-x)^b (t-x)^c$ on $[0,1]$ where $t > 1$ and $a,b > -1$.  The semiclassical Jacobi polynomials are used to give explicit expressions for the so-called generalised Zernike annular polynomials. These are multivariate orthogonal polynomials (in $x$ and $y$) on the annulus $\Omega_\rho = \{(x,y) \in \mathbb{R}^2 : 0<\rho \leq r \leq 1\}$, where $r^2 = x^2 + y^2$, orthogonal with respect to a Jacobi-like inner product:
\begin{align}
\ip<f,g>_{\rho, (a,b)}=\iint_{\Omega_\rho} f(x,y) g(x,y) (1-r^2)^a(r^2-\rho^2)^b \, \dx \dy,
\label{eq:annulus-weight}
\end{align}
where $a,b > -1$.  We shall utilise the connection between semiclassical Jacobi polynomials and Zernike annular polynomials to introduce quasi-optimal complexity means for discretising and thereby solving partial differential equations in annuli. This requires the construction of operators associated with a hierarchy of semiclassical Jacobi polynomials where $c \in \{0, 1, \dots, C\}$.

Tatian \cite{Tatian1974} and Mahajan \cite{Mahajan1981} were the first to introduce the (non-generalised) Zernike annular polynomials  orthogonal with respect to the unweighted $L^2$-inner product, $a=b=0$, on the annulus. Sometimes referred to as Tatian--Zernike or fringe-Tatian polynomials, these have proven very popular in the optics and other communities, cf.~\cite{deWinter2020, deWinter2022, Maguire2023, Rolland2021}. Zernike annular polynomials also underpin the recently introduced gyroscopic orthogonal polynomials \cite{Ellison2023}, which are used for solving PDEs in cylinders of varying heights. In all the aforementioned works,  the synthesis, analysis, connection and differentiation operators  are constructed via the Christoffel--Darboux formula combined with quadrature, which does not achieve optimal complexity. More specifically, using Christoffel--Darboux, which is equivalent to the Cholesky factorisation technique described in \cref{sec:semiclassicalJacobi}, they calculate $N$ Jacobi matrices in $\mathcal{O}(N^2)$ flops. For our applications this is numerically unstable when $N \gg 0$ and alternative methods via QR factorisations are preferable cf.~\cref{sec:semiclassicalJacobi}. They also calculate the Laplacian and identity operators in $\mathcal{O}(N^3)$ flops as opposed to our methods which require $\mathcal{O}(N^2)$ flops.

By constructing multivariate orthogonal polynomials with respect to the non-uniform weight in \cref{eq:annulus-weight}, one may build sparse spectral methods to solve partial differential equations (PDEs) on such regimes. In \cref{sec:examples}, we focus on using  these multivariate orthogonal polynomials to solve PDEs on disks and annuli via a \emph{sparse} spectral method and compare against a method based on the Chebyshev--Fourier series, though the results extend to the setting of gyroscopic orthogonal polynomials. We also construct a spectral element method for problems with radial direction discontinuities in the variable coefficients and right-hand side, as exemplified in \cref{fig:spectral-element-plots}. The boundary conditions and continuity across cells are enforced via a tau-method \cite{Lanczos1938, Burns2020}.

\begin{figure}[h!]
\centering
\subfloat[Right-hand side $f_1$ \cref{eq:f-element1}.]{\includegraphics[width =0.32 \textwidth]{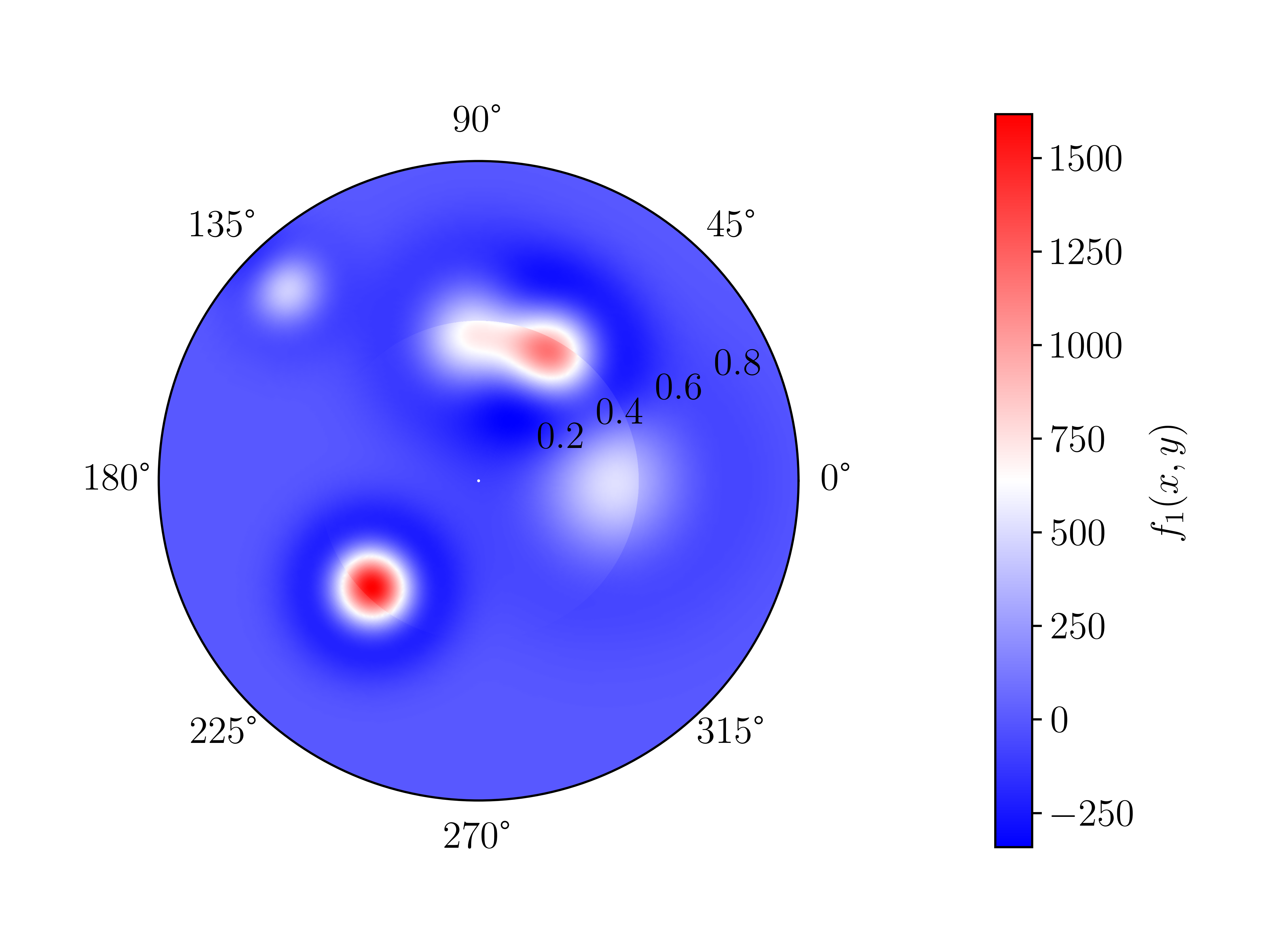}}
\subfloat[Right-hand side $f_2$ \cref{eq:f-element2}.]{\includegraphics[width =0.32 \textwidth]{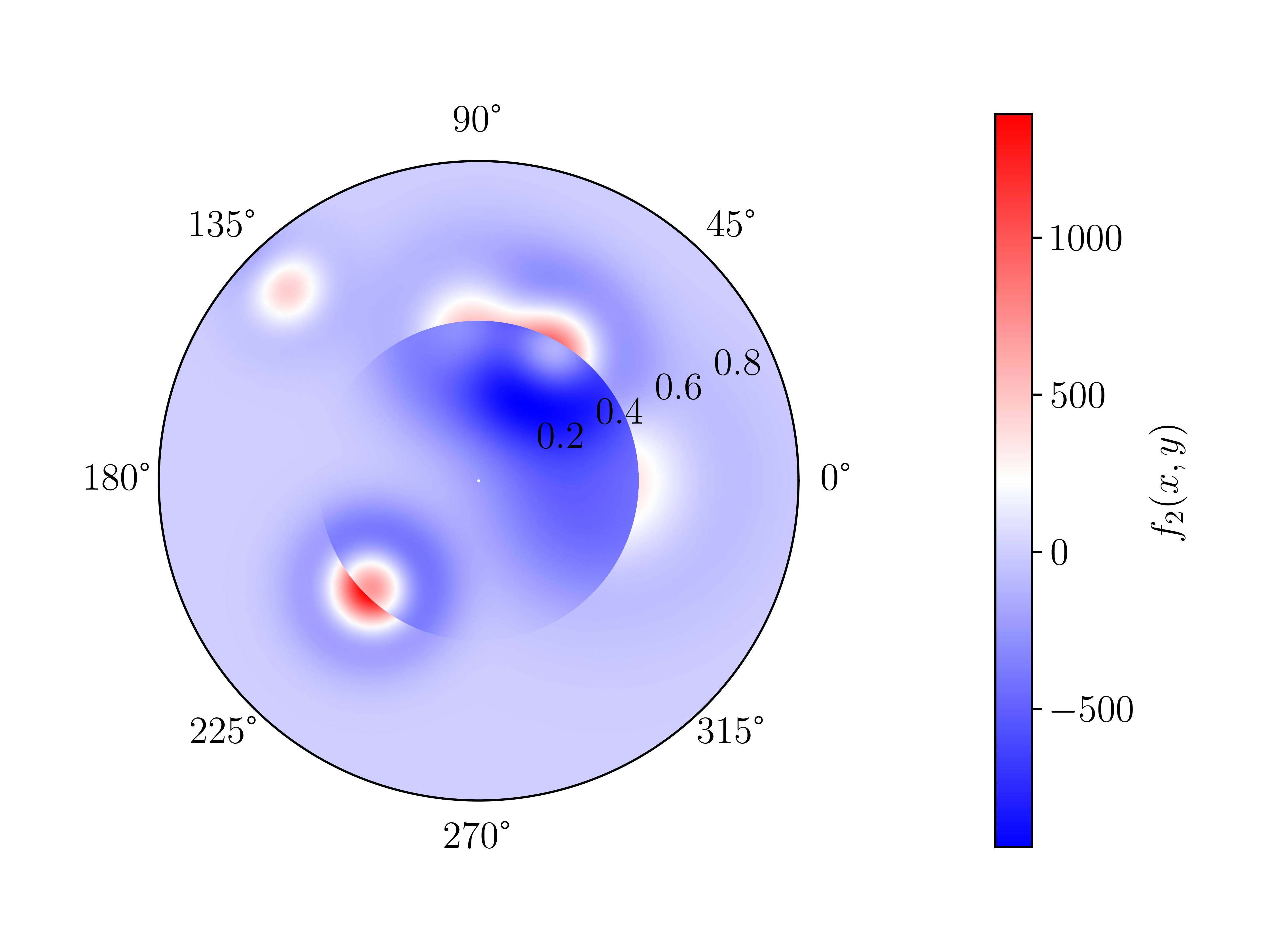}}
\subfloat[Solution $u$ \cref{eq:u-element}.]{\includegraphics[width =0.32 \textwidth]{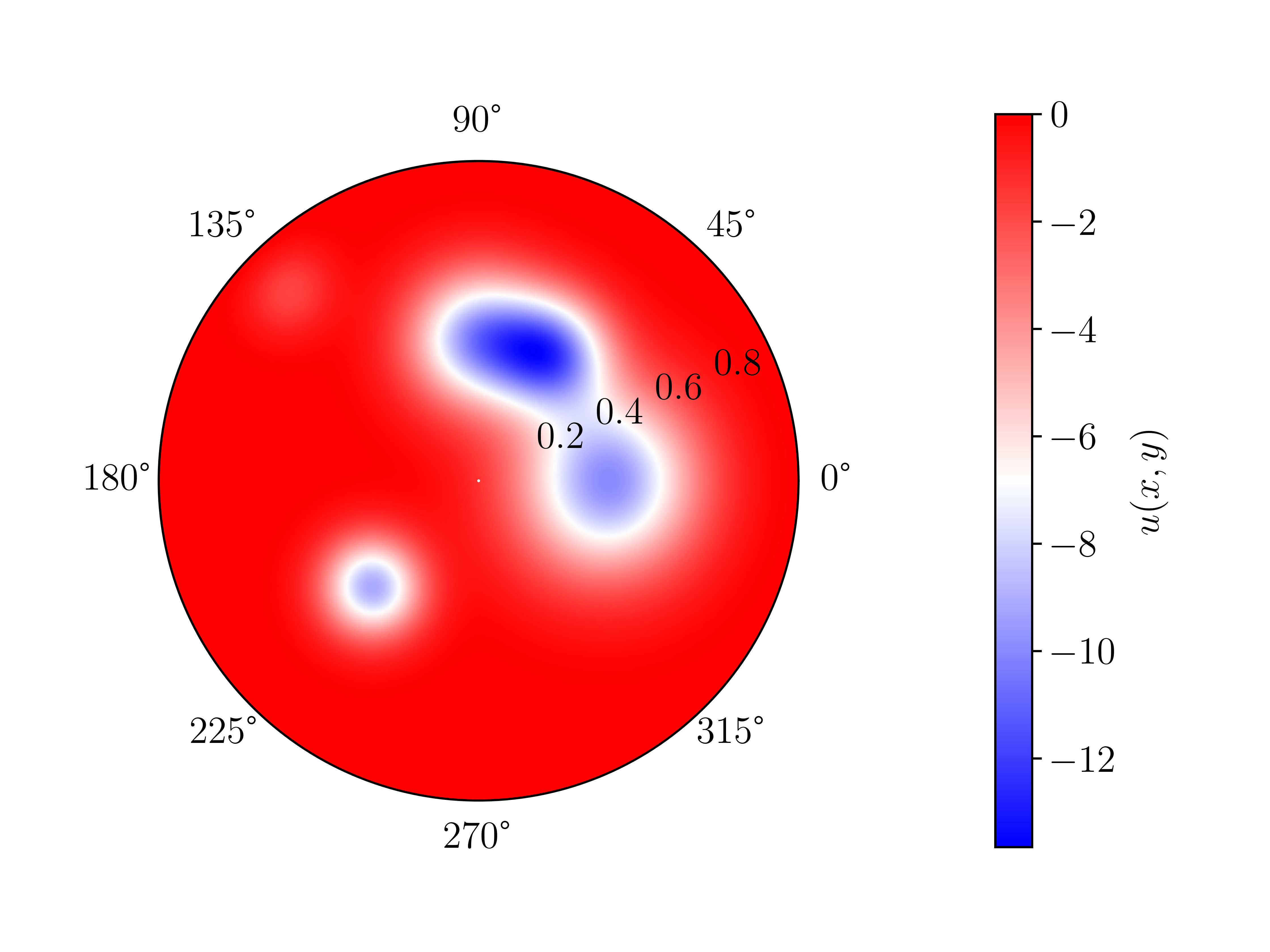}}
\caption{Plots of the right-hand sides with discontinuities in the radial direction and the corresponding solution of \cref{eq:spectral-poisson}--\cref{eq:spectral-helmholtz}. The setup of the problem is given in \cref{sec:disc-data}. These functions are resolved to machine precision utilising the spectral element method described in \cref{sec:spectral-element} based on Zernike and Zernike annular polynomials.} \label{fig:spectral-element-plots}
\end{figure}

Spectral methods for the disk/ball \cite{Boyd2011, VasilDisk, Wilber2017,Boulle2020,Meyer2021,Li2014,Atkinson2019,Ellison2022} and sphere \cite{Vasil2019, Lecoanet2019, Townsend2016} have been thoroughly studied. On the annulus many methods utilise variations of a Chebyshev--Fourier series, e.g.~\cite{Molina2020}. Barakat \cite{barakat1980optimum} constructs a basis for the annulus but it is not polynomial in Cartesian coordinates.  In 2011, Boyd and Yu \cite{Boyd2011} compared seven spectral methods for a disk. Out of the seven, they found that Zernike polynomials and Chebyshev--Fourier series had the best approximation properties where, for non-trivial examples, Zernike polynomials required half the degrees of freedom of the next best method. This motivates the construction of multivariate orthogonal polynomials for domains of interest. Until recently, a common critique of Zernike polynomials was that no fast transform for the radial direction was known \cite[Sec.~6.1]{Boyd2011}. However, this was resolved by Slevinsky \cite{Slevinsky2019, FastTransforms}, see also \cite{Olver2020}. This technique was extended to generalised Zernike annular polynomials in \cite[Sec.~4.4]{Gutleb2023}.

 Spectral element methods for the disk, where the cells are an inner disk and concentric annuli, have been previously studied. For instance, the spectral method software \texttt{Dedalus} \cite{Burns2020} was recently utilized to construct such a discretisation in the context of fully compressible magnetohydrodynamics \cite{anders2023}. Their discretisation uses a Zernike polynomial basis for the disk cell and the equivalent of a Chebyshev--Fourier series for the annuli cells. Continuity is enforced via a tau-method. We compare our spectral element method with an almost equivalent discretisation as the one considered in \cite{anders2023} in \cref{sec:examples}. 




Note that as Zernike (annular) polynomials and the Chebyshev--Fourier series feature a Fourier mode component, any operator that commutes with rotations (such as the Laplacian or identity operator) decouples across the Fourier modes. That is, we can decompose a two-dimensional PDE solve on the annulus into $2M+1$ one-dimensional solves where $M$ is the highest Fourier mode considered. For each one-dimensional solve, the scaled-and-shifted Chebyshev--Fourier series results in an almost-banded matrix. Aside from the two dense rows associated with the boundary conditions, one recovers matrices with a bandwidth of five and nine for the Poisson and Helmholtz equations, respectively.  By contrast the Zernike annular polynomials result in a tridiagonal and a pentadiagonal matrix per Fourier mode for the Poisson and Helmholtz equations, respectively, as depicted in \cref{fig:spy-poisson}: a smaller bandwidth.  Furthermore, we note that the Laplacian matrix for the Zernike annular polynomials is better conditioned than the Laplacian matrix of the Chebyshev--Fourier series, for increasing Fourier mode $m$, as plotted in \cref{fig:poisson-conditioning}.

Due to the decoupling of the considered PDEs across Fourier modes, we require $\mathcal{O}(N^2)$ for the linear system solve. Thus the overall complexity, from approximation of the right-hand side through to evaluating the approximated solution on a grid is $\mathcal{O}(N^2 \log(N))$.  We note that Bremer \cite{bremer2020} designed an algorithm that solves the two-dimensional variable coefficient Helmholtz with radial symmetry in $\mathcal{O}(k \log k)$ complexity, where $k$ is the wavenumber. The scheme is for problems posed on $\mathbb{R}^2$ but can be extended to disk and annuli domains. Although we do not achieve the same complexity for the class of equations considered by Bremer, our method can be extended to general variable coefficients, albeit at a higher computational complexity, and is also amendable to more general classes of equations. 


\begin{figure}[h!]
\centering
\subfloat[All modes.]{\includegraphics[width =0.24 \textwidth]{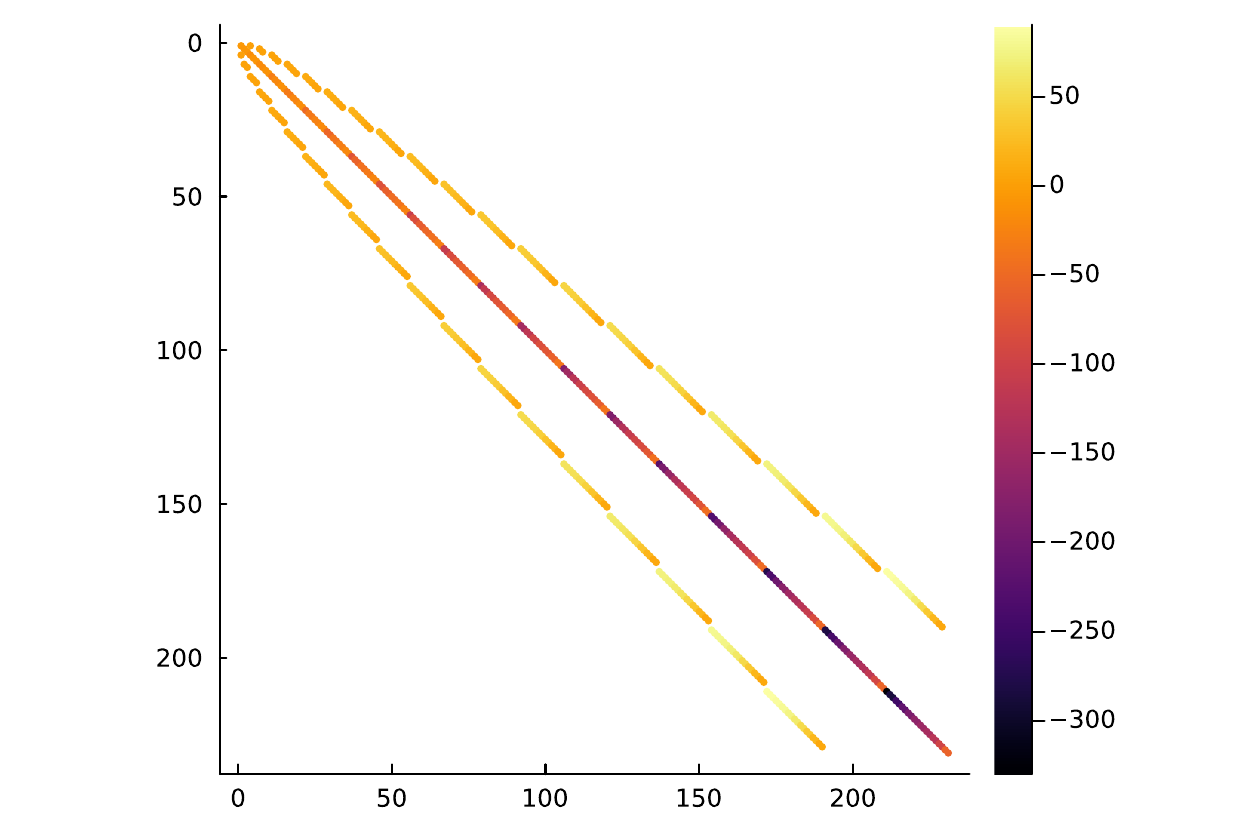}}
\subfloat[$0^{\mathrm{th}}$-Fourier mode.]{\includegraphics[width =0.24 \textwidth]{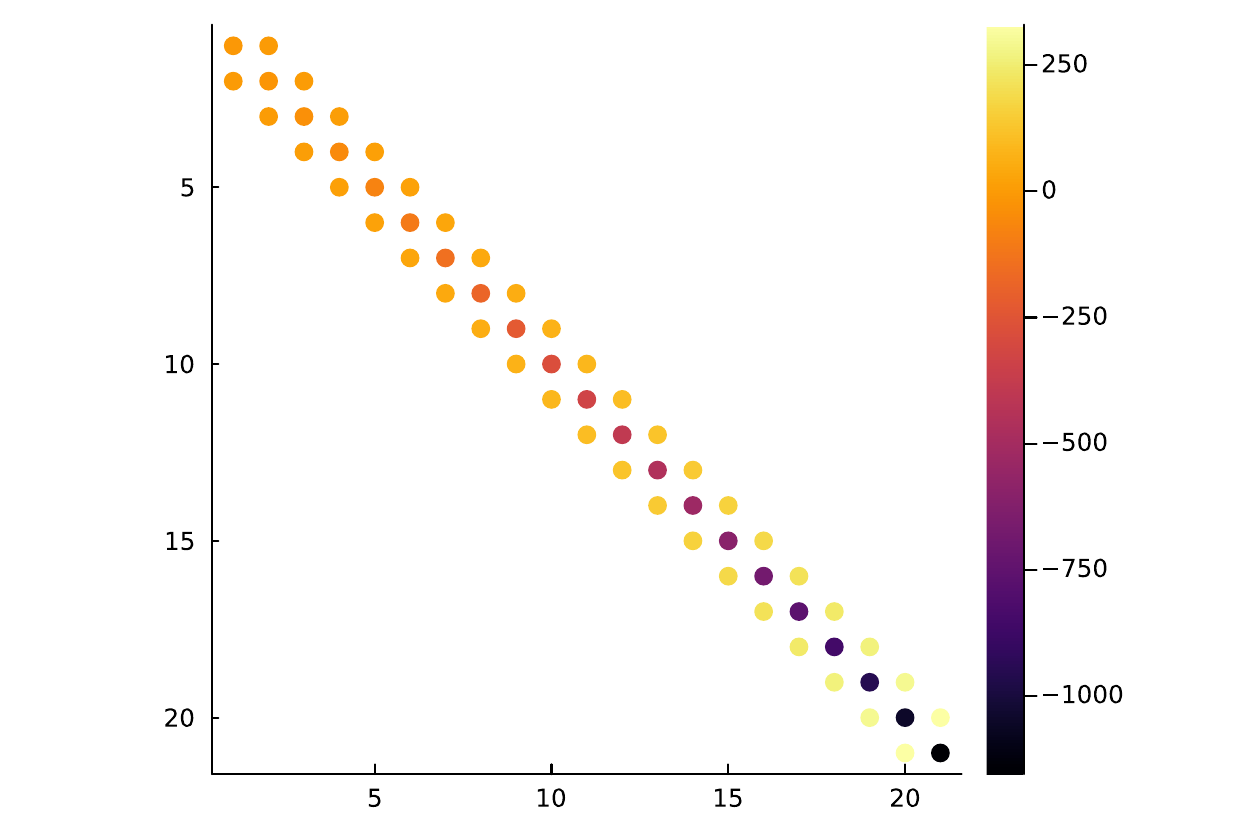}}
\subfloat[All modes.]{\includegraphics[width =0.24 \textwidth]{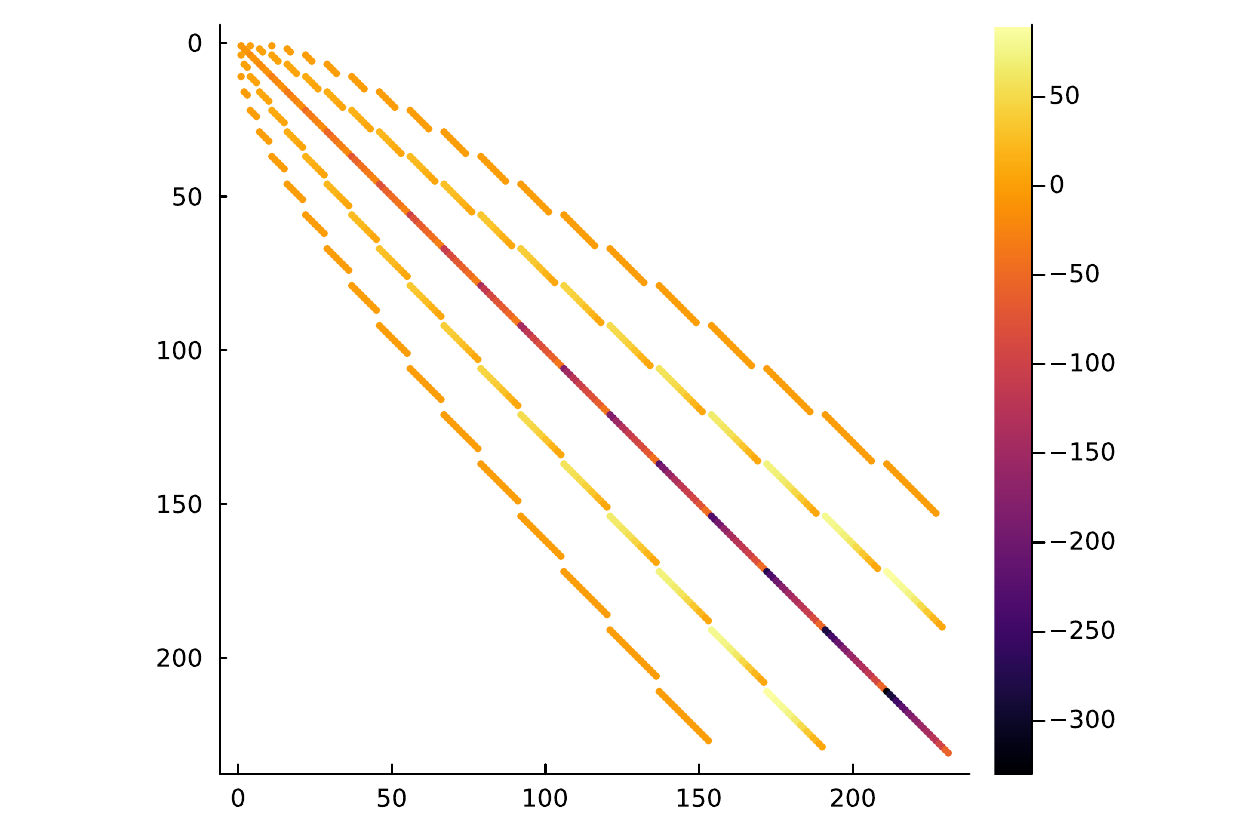}}
\subfloat[$0^{\mathrm{th}}$-Fourier mode.]{\includegraphics[width =0.24 \textwidth]{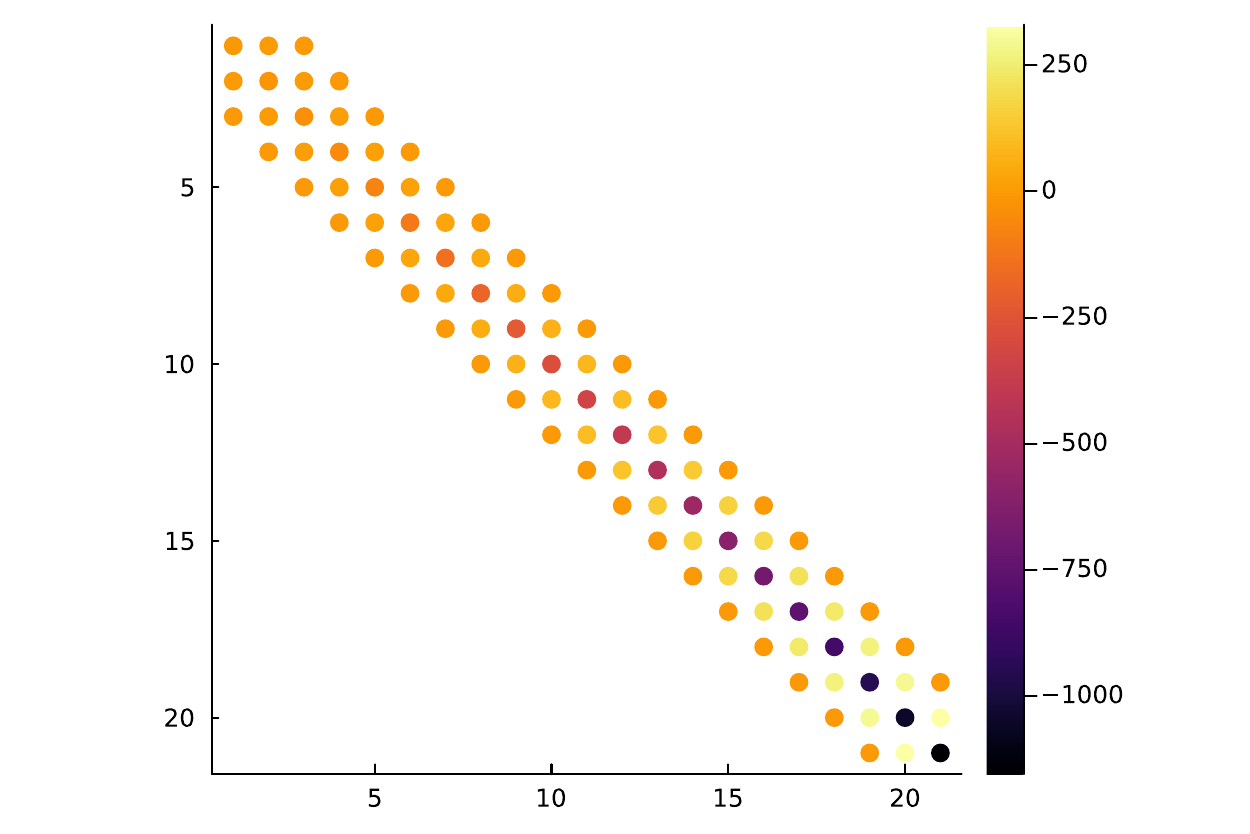}}
\caption{\texttt{Spy} plots of the matrices after discretising the Laplacian, $\Delta$, ((a) and (b)) and the Helmholtz operator $(\Delta + \mathcal{I})$ ((c) and (d)) on the 2D annulus $\Omega_{1/2}$ truncated at polynomial degree 20 with generalised Zernike annular polynomials. (a) and (c) show all the Fourier modes, whereas (b) and (d) show the matrix when one decouples the Fourier modes and focuses on the $m=0$ mode in isolation.}\label{fig:spy-poisson}
\end{figure}
%

\begin{figure}[h!]
\centering
\subfloat[Condition number.]{\includegraphics[width =0.35\textwidth]{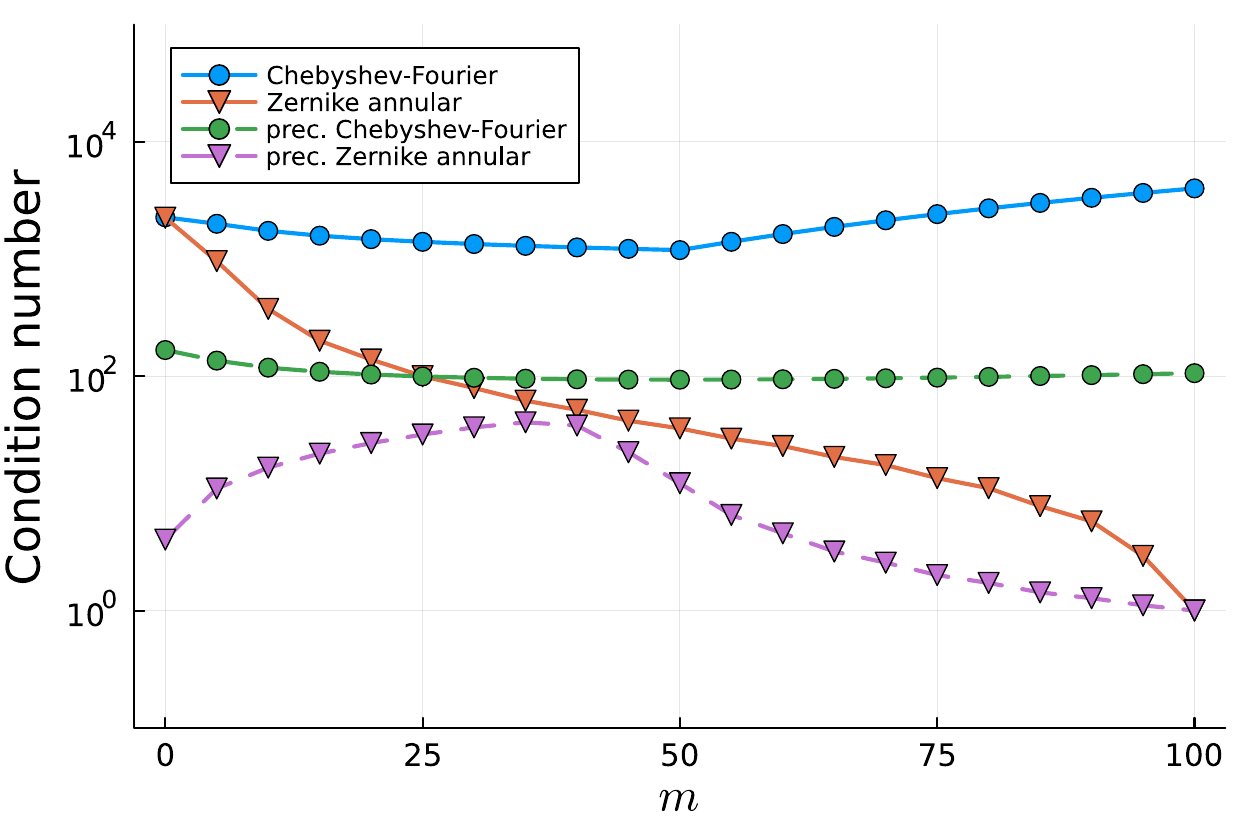}}
\subfloat[Matrix size.]{\includegraphics[width =0.35\textwidth]{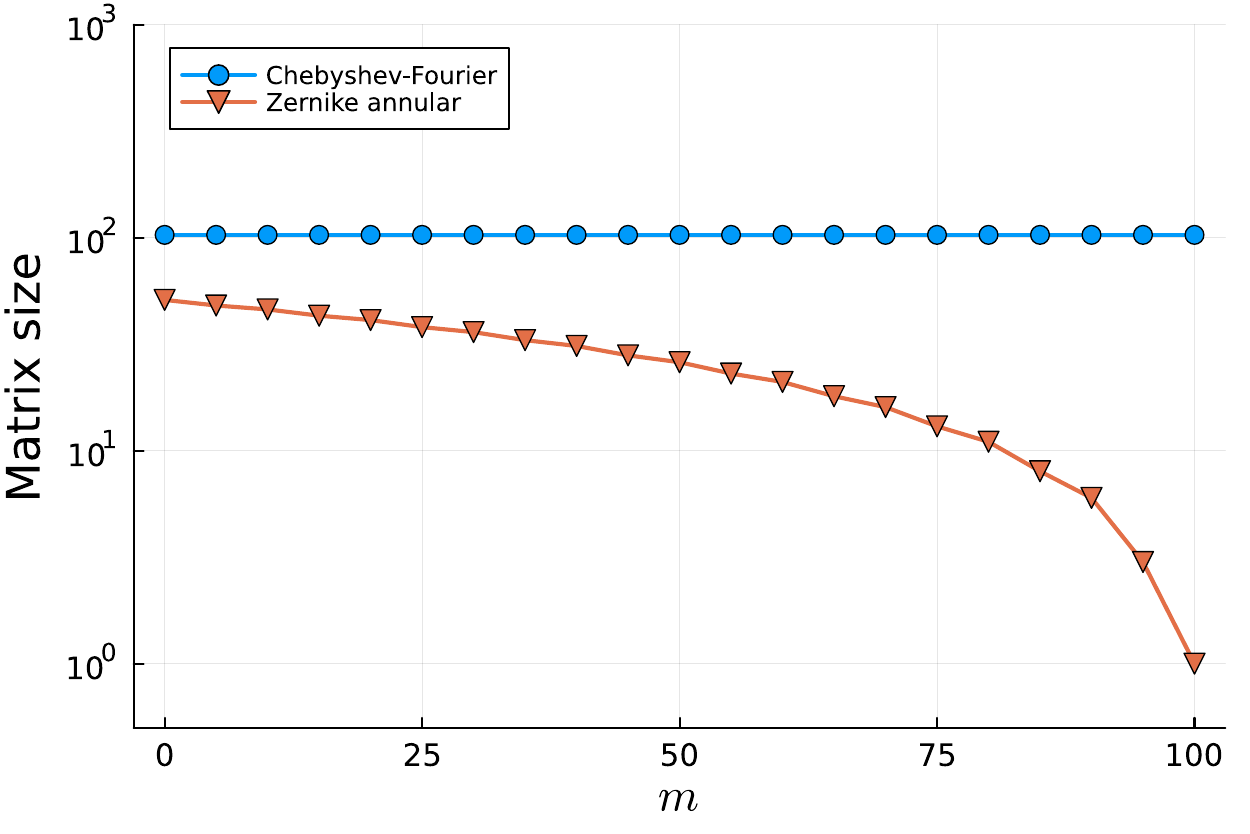}}
\caption{The condition number and matrix size of the Laplacian matrix, for increasing Fourier mode $m$, induced by the Chebyshev--Fourier series and the Zernike annular polynomials. We also consider the Laplacian matrix preconditioned with the inverse of the diagonal of the original matrix. We observe that both the original and preconditioned Laplacian matrices for the Zernike annular polynomials are better conditioned and smaller in size.}\label{fig:poisson-conditioning}
\end{figure}

We describe the scaled-and-shifted Chebyshev--Fourier series in \cref{sec:tensorproduct}. In \cref{sec:semiclassicalJacobi} we introduce the semiclassical Jacobi polynomials and discuss factorisation techniques for deriving the relationships between different family parameters. In \cref{sec:annulusOPs}, we introduce the generalised Zernike annular polynomials and derive various relationships between families of parameters that may be computed with optimal complexity. We outline the sparse spectral methods for solving the Helmholtz equation in \cref{sec:pdes}. In \cref{sec:spectral-element} we construct a spectral element method where the cells are an inner disk and concentric annuli. Finally, in \cref{sec:examples} we consider various PDE examples and compare the spectral methods. 

\begin{remark}[Weak formulation]
\label{rem:weak}
A major benefit of the generalised Zernike (annular) polynomials is that they are easily amendable to discretising PDEs posed in weak form. This allows one to construct sparse spectral element methods for variational problems. A thorough construction of such a method is beyond the scope of this work, however, we make some comments in \cref{sec:annulusOPs}.
\end{remark}

\begin{remark}
Zernike annular polynomials also allow one to construct random functions on the annulus via the methods of Filip et al.~\cite{Filip2019}.
\end{remark}

\begin{remark}[Generalisation to 3D]
Just as generalised Zernike polynomials on the disk can be extended to the ball \cite[Prop.~5.2.1]{dunkl2014orthogonal},  orthogonal polynomials on annuli naturally extend to higher dimensional spherical shells with spherical harmonics in place of Fourier modes. For simplicity we restrict our attention to 2D.
\end{remark}

\begin{remark}[Generalisation to vector-valued PDEs]
For brevity we focus on scalar PDEs. Whilst it is possible to use a scalar basis of orthogonal polynomials on annuli in each component of a vector-valued problem, this will not automatically capture rotational invariance and hence the complexity will be sub-optimal. It is likely the construction in \cite{VasilDisk} can be extended to annuli, {\it mutatis mutandi}, by an appropriate replacement of the Jacobi polynomials with semiclassical Jacobi polynomials.
\end{remark}

\section{Scaled-and-shifted Chebyshev--Fourier series}
\label{sec:tensorproduct}

The Chebyshev--Fourier series is constructed via a tensor product of Chebyshev polynomials of the first kind (denoted $T_n(x)$, $n\geq 0$) \cite[Sec.~18.3]{dlmf} in the radial direction with the Fourier series in the angular direction. 

Following \cite{Olver2020}, we use quasimatrix notation, i.e.
\begin{align}
{\bf T}(x) \coloneqq \bvect[T_0(x), T_1(x), T_2(x), \dots].
\end{align}
This is convenient for expressing recurrence relationships.  For example, the three term recurrence is expressed as
\begin{align}
x {\bf T}(x) = {\bf T}(x) 
\begin{pmatrix}
0 & 1/2 & & & \\
1 & 0 & 1/2 & & \\
& 1/2 & 0 & 1/2 &\\
& & \ddots & \ddots & \ddots
\end{pmatrix}.
\end{align}
Consider the annulus $\Omega_\rho = \{ (x,y) \in \mathbb{R}^2: 0< \rho \leq |(x,y)| \leq 1\}$. Define
\begin{align}
r_\rho = \frac{2}{1-\rho}\left(r-\frac{1+\rho}{2}\right). \label{eq:r-affine}
\end{align} 
Let $F_{m,j}$ denote the Fourier series at mode $m$ where $j \in \{0,1\}$ if $m \in \{1, 2, \dots\}$, else $j=1$ if $m=0$. For clarity, $F_{0,1}(\theta) = 1$ and for $m \geq 1$,
\begin{align}
F_{m,j}(\theta) \coloneqq
\begin{cases}
\sin(m\theta) & \text{if} \; j =0,\\
\cos(m\theta) & \text{if} \; j = 1.
\end{cases}
\end{align} 
Then the scaled-and-shifted Chebyshev--Fourier series on the annulus is given by the tensor product:
\begin{align}
\begin{split}
&{\bf T}(r_\rho) \otimes {\bf F}(\theta)  \\
&  \coloneqq \left(T_0(r_\rho) F_{0,1}(\theta) \, | \, T_1(r_\rho) F_{0,1}(\theta) \, | \, \cdots \, |\, T_0(r_\rho)F_{1,0}(\theta) \,  | \, \cdots \, | \, T_{2n}(r_\rho) F_{n,0}(\theta) \, | \, \cdots \right).
\end{split}
\label{def:tensor-product}
\end{align} 
An expansion in this basis takes the form:
\begin{align*}
u(r,\theta) &= 
\sum_{n=0}^\infty u_{n, 0,  1 } T_n(r_\rho) + \sum_{n=0}^\infty \sum_{m=1}^\infty \left[ u_{n, m, 0} T_n(r_\rho) \sin(m\theta) + u_{n, m, 1} T_n(r_\rho) \cos(m\theta) \right]\\
& = \left[{\bf T}(r_\rho) \otimes {\bf F}(\theta)\right] 
\begin{pmatrix}
u_{0,0, 1 }&
u_{1,0,0}&
\cdots&
u_{2n, m, j}&
\cdots
\end{pmatrix}^\top
= \left[{\bf T}(r_\rho) \otimes {\bf F}(\theta)\right]  {\bf u}.
\end{align*}
If we rearrange the coefficients in ${\bf u}$ into a matrix such that
\begin{align}
U = 
\begin{pmatrix}
u_{0,0,1} & u_{0,1,0} & u_{0,1,1} & u_{0,2,0} & \cdots\\
u_{1,0,1} & u_{1,1,0} & u_{1,1,1} & u_{1,2,0} & \cdots\\
\vdots & \vdots & \vdots &\vdots & \ddots
\end{pmatrix},
\end{align}
then $\left[{\bf T}(r_\rho) \otimes {\bf F}(\theta)\right]  {\bf u} = {\bf T}(r_\rho) U {\bf F}(\theta)^\top$. The choice of truncation degree $N$ and Fourier mode $M$ are independent and may be custom chosen for each problem individually. In this work we pick $M = N$ resulting in the truncated coefficient matrix $U$ of size $(N+1) \times (2N+1)$.

\section{Semiclassical Jacobi polynomials}
\label{sec:semiclassicalJacobi}

We denote the orthonormalised Jacobi polynomials by $P^{(a,b)}_n(x)$, $n\geq 0$ which are orthonormal with respect to the weight  $(1-x)^a (1+x)^b$ such that $a, b >-1$ \cite[Sec.~18.3]{dlmf}.  The term \emph{weighted Jacobi polynomials} refers to the Jacobi polynomial $P_n^{(a,b)}(x)$ multiplied by its weight, i.e.~$W^{(a,b)}_n(x) \coloneqq (1-x)^a (1+x)^b P_n^{(a,b)}(x)$.

The building blocks for the Zernike annular polynomials are the so-called semiclassical Jacobi polynomials. Recall that these are univariate orthogonal polynomials on the interval $(0,1)$ with respect to the inner product
\begin{align}
\int_0^1 f(x) g(x) x^a (1-x)^b (t-x)^c \D x,
\end{align}
where $t > 1$ and $a,b > -1$. First introduced by Magnus\footnote{The weight considered by Magnus is the slightly different $(1-x)^a x^b (t-x)^c$, which has the unfortunate property that the singularities are neither listed left-to-right or right-to-left. We have changed the order to be left-to-right.} in \cite[Sec.~5]{Magnus1995}, we denote the orthonormal semiclassical Jacobi polynomials as
$$
\Q_n(x) = k_n x^n + O(x^{n-1}),
$$
where, for concreteness, $k_n > 0$. Note that, when $c = 0$, these become scaled-and-shifted orthonormalised Jacobi polynomials and we drop the $t$ dependence. That is, we have for any $t$,
\begin{align}
Q_n^{t,(a,b,0)}(x) = (-1)^{n} 2^{(a+b+1)/2}  P_n^{(a,b)}(1-2x) = 2^{(a+b+1)/2} P_n^{(b,a)}(2x-1).
\label{eq:scaling}
\end{align}
Many beautiful connections between semiclassical Jacobi polynomials and Painlev\'e equations are known, and we refer the readers to \cite{Magnus1995}.

\begin{remark}
The semiclassical Jacobi polynomials are a special case of the generalised Jacobi polynomials found in \cite[Sec.~3]{Ellison2023}. 
\end{remark}

We require recurrence relationships between families of the Zernike annular polynomials in the construction of spectral methods. These recurrence relationships are derived from recurrence relationships that relate hierarchies of semiclassical Jacobi polynomial families. We denote the connection matrix mapping the parameter family from $(\alpha,\beta,\gamma)$ to $(a,b,c)$ by $R^{t,(\alpha,\beta,\gamma)}_{(a,b,c)}$. To indicate that maps are from a (partially) weighted family to another (partially) weighted family, we utilise the Roman script $\mathrm{a}$, $\mathrm{b}$, and $\mathrm{c}$ or combinations thereof. For instance: 
\begin{align}
\begin{split}
\bQ(x) &= {\bf Q}^{t,(\alpha,\beta,\gamma)}(x) R^{t,(\alpha,\beta,\gamma)}_{(a,b,c)},\\
x^a \bQ &= x^\alpha {\bf Q}^{t,(\alpha,\beta,\gamma)} R^{t,(\alpha,\beta,\gamma)}_{\mathrm{a}, (a,b,c)},\\
x^a (1-x)^b (t-x)^c \bQ(x) &=x^\alpha (1-x)^\beta (t-x)^\gamma {\bf Q}^{t,(\alpha,\beta,\gamma)}(x) R^{t,(\alpha,\beta,\gamma)}_{\mathrm{abc}, (a,b,c)}.
\end{split}\label{eq:connection}
\end{align}
Similarly, we denote the differentiation connection matrix between families of semiclassical Jacobi polynomials by $D^{t,(\alpha,\beta,\gamma)}_{(a,b,c)}$, e.g.
\begin{align}
\begin{split}
\fdx \bQ(x) &= {\bf Q}^{t,(\alpha,\beta,\gamma)}(x) D^{t,(\alpha,\beta,\gamma)}_{(a,b,c)},\\
\fdx x^a \bQ &= x^\alpha {\bf Q}^{t,(\alpha,\beta,\gamma)} D^{t,(\alpha,\beta,\gamma)}_{\mathrm{a}, (a,b,c)},\\
\fdx [x^a (1-x)^b (t-x)^c \bQ(x)] &=x^\alpha (1-x)^\beta (t-x)^\gamma {\bf Q}^{t,(\alpha,\beta,\gamma)}(x) D^{t,(\alpha,\beta,\gamma)}_{\mathrm{abc}, (a,b,c)}.
\end{split}\label{eq:diff}
\end{align}


There exist choices of $(\alpha, \beta, \gamma)$ and $(a,b,c)$ such that the matrices $R^{t,(\alpha,\beta,\gamma)}_{\boldsymbol{\cdot}, (a,b,c)}$ and $D^{t,(\alpha,\beta,\gamma)}_{\boldsymbol{\cdot}, (a,b,c)}$ are sparse. In the next three subsections, we introduce the factorisation techniques that allow one to apply analysis and synthesis operators with quasi-optimal complexity as well as compute Jacobi, connection and differentiation matrices.

\subsection{Connection matrices}

Given the quasimatrix of a semiclassical Jacobi family evaluated at a point, ${\bf Q}^{t,(a,b,c)}(x)$, the goal is to find the (truncated) infinite-dimensional matrix of coefficients in  \cref{eq:connection} that relates the evaluation to a different semiclassical Jacobi family evaluated at the same point.  In recent work by Gutleb et al.~\cite{Gutleb2023}, the authors analyse six infinite-dimensional matrix factorisations for this purpose. The main contribution of their work is summarised in \cite[Tab.~1]{Gutleb2023}. We replicate the relevant results in \cref{th:factorisations}.

\begin{theorem}[Tab.~1 in \cite{Gutleb2023}]
\label{th:factorisations}
Let $w(x)$ denote a nonnegative and bounded weight on $\mathbb{R}$ and consider the measure $\mathrm{d}\mu$ such that both $\mathrm{d}\mu$ and $w(x) \mathrm{d}\mu$ are positive Borel measures on the real line whose support contains an infinite number of points and has finite moments.  Suppose that ${\bf P}(x)$ and ${\bf Q}(x)$ are the quasimatrices of two families of orthonormal polynomials with respect to $\mathrm{d}\mu$ and $w(x) \mathrm{d}\mu$, respectively. Let $X_P$ denote the Jacobi matrix of ${\bf P}(x)$, i.e. $x {\bf P}(x) = {\bf P}(x) X_P$. Then
\begin{align}
{\bf P}(x) = {\bf Q}(x) R \;\; \text{and} \;\; w(x) {\bf Q}(x) = {\bf P}(x) R^\top \;\; \text{where} \;\; w(X_P) = R^\top R.
\label{eq:cholesky}
\end{align}
Here $R$ is upper-triangular, i.e.~$R^\top R$ is a Cholesky factorisation. Moreover, if $w(x)$ is a polynomial, then $w(X_P)$ and $R$ are banded. Furthermore, if $\sqrt{w}(x)$ is also a polynomial, then
\begin{align}
{\bf P}(x) = {\bf Q}(x) R \;\; \text{and} \;\; \sqrt{w}(x) {\bf Q}(x) = {\bf P}(x) Q \;\; \text{where} \;\; \sqrt{w}(X_P) = QR.
\label{eq:QR}
\end{align}
Here $Q$ is orthogonal and $R$ is upper-triangular, i.e.~they are the factors of a positive-phase QR-factorisation (and are unique). 
\end{theorem}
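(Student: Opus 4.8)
The plan is to build everything from the single change-of-basis matrix $R$ relating the two graded polynomial bases, and then to identify $R^\top R$ with $w(X_P)$ through the functional calculus of the Jacobi operator. First I would observe that, since $P_n$ and $Q_n$ are both polynomials of exact degree $n$ with positive leading coefficients, the map sending ${\bf Q}(x)$ to ${\bf P}(x)$ is a unique upper-triangular change of basis with positive diagonal entries; writing ${\bf P}(x) = {\bf Q}(x) R$ fixes $R$, which is invertible because both families span the polynomials grade by grade.

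The heart of the argument is the identity $(w(X_P))_{ij} = \int P_i(x)\, w(x)\, P_j(x)\,\mathrm{d}\mu$, equivalently $w(x){\bf P}(x) = {\bf P}(x)\, w(X_P)$, where the latter is read as an expansion in $L^2(\mathrm{d}\mu)$ when $w$ is not a polynomial. For a polynomial $w$ this is immediate: iterating the three-term recurrence $x{\bf P}(x) = {\bf P}(x) X_P$ gives $x^k {\bf P}(x) = {\bf P}(x) X_P^k$, and linearity finishes it. For a general bounded $w$ one invokes the spectral theorem for the symmetric tridiagonal Jacobi operator $X_P$: the hypotheses (support with infinitely many points and finite moments, ensuring $X_P$ is a well-defined essentially self-adjoint operator whose orthonormal polynomials form a complete system) make the unitary $e_n \mapsto P_n$ intertwine $X_P$ with multiplication by $x$ on $L^2(\mathrm{d}\mu)$, so the functional calculus sends $w(X_P)$ to multiplication by $w$ and yields the stated matrix entries. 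I expect this identification, and the precise sense in which $w{\bf Q} = {\bf P}R^\top$ holds when $w$ is not a polynomial, to be the main obstacle, since it is exactly where the measure-theoretic hypotheses enter.

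With this in hand the Cholesky claim is a short computation: using ${\bf P}(x) = {\bf Q}(x)R$ and orthonormality of ${\bf Q}(x)$ with respect to $w\,\mathrm{d}\mu$,
\begin{align*}
w(X_P) = \int {\bf P}^\top {\bf P}\, w\,\mathrm{d}\mu = R^\top\!\left(\int {\bf Q}^\top {\bf Q}\, w\,\mathrm{d}\mu\right)\! R = R^\top R,
\end{align*}
and positive-definiteness of this Gram matrix (a nonzero polynomial cannot vanish on the infinite support of $w\,\mathrm{d}\mu$) gives existence and uniqueness of the factor with $R$ upper-triangular and positive diagonal. The companion relation follows by substituting ${\bf Q}(x) = {\bf P}(x)R^{-1}$ into $w(x){\bf P}(x) = {\bf P}(x)w(X_P)$, giving $w(x){\bf Q}(x) = {\bf P}(x)\,w(X_P)\,R^{-1} = {\bf P}(x)\,R^\top R\,R^{-1} = {\bf P}(x)R^\top$. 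For the bandedness, note $X_P$ is symmetric tridiagonal, so if $w$ has degree $d$ then $w(X_P)$ has bandwidth $d$, and the Cholesky factor of a banded symmetric positive-definite matrix inherits upper bandwidth $d$.

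Finally, for the QR refinement I would use that $\sqrt{w}(X_P)$ is symmetric (a real polynomial in the symmetric matrix $X_P$), whence $w(X_P) = \sqrt{w}(X_P)^\top \sqrt{w}(X_P)$. Taking the positive-phase QR factorisation $\sqrt{w}(X_P) = QR$ gives $R^\top R = R^\top Q^\top Q R = w(X_P)$, so by uniqueness of the positive-diagonal Cholesky factor this $R$ coincides with the one above. The second identity then drops out from $\sqrt{w}(x){\bf P}(x) = {\bf P}(x)\sqrt{w}(X_P)$ (valid since $\sqrt{w}$ is a polynomial) together with ${\bf Q}(x) = {\bf P}(x)R^{-1}$, yielding $\sqrt{w}(x){\bf Q}(x) = {\bf P}(x)\sqrt{w}(X_P)R^{-1} = {\bf P}(x)(QR)R^{-1} = {\bf P}(x)Q$.
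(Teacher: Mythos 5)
Your proof is correct and follows the same route as the source this theorem is quoted from: the paper itself gives no proof (it reproduces Tab.~1 of the cited reference), and there the argument is likewise built on identifying $w(X_P)$ with the Gram matrix $\int {\bf P}^\top {\bf P}\, w\,\mathrm{d}\mu$ via the spectral theorem, expanding ${\bf P}={\bf Q}R$ to get $R^\top R$, and matching the triangular connection matrix with the unique positive-diagonal Cholesky (resp.\ QR) factor. The only point you gloss over is that in the infinite-dimensional setting the existence and uniqueness of the Cholesky and positive-phase QR factors, and the orthogonality (rather than mere isometry) of $Q$, come from the consistency of nested finite sections of the banded operator, but this is a standard refinement and does not affect the argument.
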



Suppose that $\alpha- a \geq 0$, $\beta - b \geq 0$, $\gamma - c \geq 0$ are integers. Let $X_{t,(a,b,c)}$ denote the Jacobi matrix of the orthormalised semiclassical Jacobi quasimatrix ${\bf Q}^{t,(a,b,c)}(x)$, i.e.
\begin{align*}
x {\bf Q}^{t,(a,b,c)}(x) = {\bf Q}^{t,(a,b,c)}(x) X_{t,(a,b,c)}.
\end{align*} 
Suppose one computes the following Cholesky factorisation
\begin{align}
X_{t,(a,b,c)}^{\alpha-a} (I-X_{t,(a,b,c)})^{\beta - b} (tI - X_{t,(a,b,c)})^{\gamma - c} = R^\top R,
\label{eq:chol1}
\end{align}
where $I$ is the identity matrix of conforming size. Then \cref{eq:cholesky} in \cref{th:factorisations} reveals that $R^{t,(\alpha,\beta,\gamma)}_{(a,b,c)} = R$ and $R^{t,(a,b,c)}_{\mathrm{abc},(\alpha,\beta,\gamma)} = R^\top$. Furthermore, we note that $R$ is upper-triangular with bandwidth $(\alpha-a)+(\beta-b)+(\gamma-c) + 1$. For more general choices of $\alpha, \beta, a, b > -1$, and $\gamma, c \in \mathbb{R}$, we refer the interested reader to \cite{Gutleb2023}. 



When $\alpha \gg a$, $\beta \gg b$, or $\gamma \gg c$, the matrix on the left-hand side of \cref{eq:chol1} becomes increasingly less positive-definite and eventually a Cholesky factorisation fails due to numerical roundoff. In such a case, we recommend computing the connection matrix via an intermediate semiclassical Jacobi family. Alternatively, one may consider \cref{eq:QR} in \cref{th:factorisations}:
\begin{align}
{\bf Q}^{t, (a,b,c)}(x) = {\bf Q}^{t, (a,b,c+2)}(x) R, \;\;  (t-x){\bf Q}^{t, (a,b,c+2)}(x) = {\bf Q}^{t, (a,b,c)}(x) Q. \label{eq:qr1}
\end{align}

In summary ${\bf Q}^{t, (a,b,0)}(x) ={\bf Q}^{t, (a,b,1)}(x) R_1$ where $R_1^\top R_1 = tI - X_{t,(a,b,0)}$ and ${\bf Q}^{t, (a,b,0)}(x) ={\bf Q}^{t, (a,b,2)}(x) R_2$ where $QR_2= tI - X_{t,(a,b,0)}$. Connection matrices for higher values of $c$ may be computed by chaining together these factorisations.

\subsection{Jacobi matrices}

The goal is to compute the entries of the Jacobi matrix $X_{t,(a,b,c)}$ with $\mathcal{O}(cN)$ complexity, where $c \in \mathbb{N}$ and $N$ is the truncation degree. We first note that the main difficulty arises in the parameter $c$. If $c=0$ then explicit formulae for the Jacobi matrix exist. 

\begin{remark}
Given the base Jacobi matrix $X_{t,(a,b,c)}$ for any $c \in \mathbb{R}$, $c \geq 0$, then one may compute the Jacobi matrix $X_{t,(a,b,c+m)}$, for any $m \in \mathbb{N}$, with the following techniques. Thus we are not restricted to the case where $c \in \mathbb{N}$.
\end{remark}

The identities in \cref{eq:chol1} and \cref{eq:qr1} provide three alternatives for computing the Jacobi matrix when $c \neq 0$, $c \in \mathbb{N}$:
\begin{enumerate}
\itemsep=0pt
\item via the Cholesky factorisation;
\item via the $R$ in a QR factorisation;
\item via the $Q$ in a QR factorisation.
\end{enumerate}
\noindent \underline{Via Cholesky}. One may find the connection matrix between the case $c=0$ and any other choice of $c \in \mathbb{N}$ utilising \cref{eq:chol1}. If $c \in \mathbb{N}$ then the connection matrix is banded. We note that in principle one may compute the connection matrix to any $c \in \mathbb{N}$ in one step. However, one must find the Cholesky factorisation of $(tI - X_{t, (a,b,0)})^c$ which is numerically indefinite for even moderate values of $c$. Thus it is \emph{highly} recommended that one proceeds in steps of one for $c$. If $R^\top R =  t I-X_{t,(a,b,0)}$, then
\begin{align}\label{eq:choleskymethod}
\begin{split}
x  {\bf Q}^{t,(a,b,1)}(x) &= x  {\bf Q}^{t,(a,b,0)}(x)R^{-1}
= {\bf Q}^{t,(a,b,0)}(x) X_{t,(a,b,0)} R^{-1} \\
&=  {\bf Q}^{t,(a,b,1)}(x) R X_{t,(a,b,0)}R^{-1}.
\end{split}
\end{align}
Hence $X_{t,(a,b,1)} = R X_{t,(a,b,0)}R^{-1}$. When $c>1$, we increment the $c$ parameter in steps of one until we reach the desired Jacobi matrix. 

\noindent \underline{Via QR}. If one utilises a QR factorisation via \cref{eq:qr1} then it is possible to increment the $c$ parameter in steps of two. If $QR = t I-X_{t,(a,b,0)}$, then
\begin{align}
\begin{split}
x (t-x) {\bf Q}^{t,(a,b,2)}(x) 
& = x  {\bf Q}^{t,(a,b,0)}(x) Q = {\bf Q}^{t,(a,b,0)}(x) X_{t,(a,b,0)} Q \\
&= (t-x) {\bf Q}^{t,(a,b,2)}(x) Q^\top X_{t,(a,b,0)} Q.
\end{split}
\label{eq:analysis}
\end{align}
Hence, $X_{t,(a,b,2)} =  Q^\top X_{t,(a,b,0)} Q$. Alternatively, note that
\begin{align}\label{eq:qrmethodR}
\begin{split}
 {\bf Q}^{t,(a,b,0)}(x) X_{t,(a,b,0)} &= x {\bf Q}^{t,(a,b,0)}(x) = x {\bf Q}^{t,(a,b,2)}(x) R\\
&= {\bf Q}^{t,(a,b,2)}(x) X_{t,(a,b,2)} R =  {\bf Q}^{t,(a,b,0)}(x) R^{-1} X_{t,(a,b,2)} R.
\end{split}
\end{align}
Thus $X_{t,(a,b,2)} =R  X_{t,(a,b,0)}  R^{-1}$.


For either of the three routes, one computes the triple-matrix-product in $\mathcal{O}(N)$ complexity. See \cref{sec:complexity} for more details. Thus to compute the Jacobi matrix for an arbitrary parameter $c \in \mathbb{N}$ requires $\mathcal{O}(cN)$ complexity. As mentioned before, when building multivariate orthogonal polynomials, one typically requires a hierarchy of Jacobi matrices, one for each value of $c \in \{c_0, c_0+1, c_0+2, \dots\}$, $c_0 \geq 0$. Hence, in practice, these intermediate Jacobi matrices are required.

Out of these three approaches, the most theoretically stable method is the $Q$-factor variant of the QR factorisation as $\|Q\|_2=1$, whereas the upper triangular factors (and their inverses) in Cholesky and QR do not have unit $2$-norm.

\subsection{Analysis \& synthesis}
\label{sec:semiclassical-analysis}
The analysis and synthesis operators map a function to the coefficient vector of an expansion and vice versa. More precisely, let $\mathrm{d}\mu$ denote a positive Borel measure on the real line whose support contains an infinite number of points and has finite moments. Let $L^2(\mathbb{R}, \mathrm{d}\mu)$ denote the space of square integrable functions with respect to the measure $\mathrm{d}\mu$ and let $\ell^2$ denote the space of square summable sequences. Suppose that ${\bf P}(x)$ denotes a quasimatrix that is orthogonal with respect to $\mathrm{d}\mu$. Then the analysis operator $\mathcal{A}$ and synthesis operator $\mathcal{S}$ satisfy:
\begin{align}
&\mathcal{A} : L^2(\mathbb{R}, \mathrm{d}\mu) \to \ell^2,  &&f \mapsto \int_{\mathbb{R}} {\bf P}(x)^\top f(x) \mathrm{d}\mu(x) = {\bf f},\\ 
&\mathcal{S} : \ell^2 \to L^2(\mathbb{R}, \mathrm{d}\mu), &&{\bf f} \mapsto {\bf P}{\bf f} = f.
\end{align}
For a given basis, it is favourable if there exists a fast transform between function evaluations and coefficient vector expansions, i.e.~an ability to compute the analysis and synthesis operators in a fast manner.

Suppose that the goal is to apply the synthesis operator to $(t-x)^{c/2} {\bf Q}^{t,(a,b,c)}(x) {\bf f}$. We consider a ``square-root'' weighted expansion as it directly connects to the synthesis operator for Zernike annular polynomials in \cref{sec:zernike-analysis-synthesis}. Suppose that $c$ is even. Then, via \cref{eq:qr1},
\begin{align}
\begin{split}
(t-x)^{c/2}{\bf Q}^{t, (a,b,c)}(x) {\bf  f}
&=  (t-x)^{c/2-1}{\bf Q}^{t,(a,b,c-2)}(x) Q_{c-2} {\bf  f}\\
& =   {\bf Q}^{t,(a,b,0)}(x) Q_0 Q_2 \cdots Q_{c-2} {\bf  f}\\
& =  {\bf T}(1-2x) R_T S^{-1}_{(a,b)} Q_0 Q_2 \cdots Q_{c-2} {\bf  f}.
\end{split}
\end{align}
Here $R_T$ is the upper-triangular conversion matrix as computed via the Chebyshev--Jacobi transform \cite{Slevinsky2018}, $S_{(a,b)}$ is the diagonal scaling as defined by \cref{eq:scaling}, and $Q_\gamma R_\gamma = tI - X_{t,(a,b,\gamma)}$. When $c$ is odd, one converts down to  $(t-x)^{1/2} {\bf Q}^{t,(a,b,1)}(x)$ with QR and then a final Cholesky factorisation to recover $(t-x)^{1/2} {\bf Q}^{t,(a,b,0)}(x)$. We then convert ${\bf Q}^{t,(a,b,0)}(x)$ to ${\bf T}(1-2x)$ and there exists a fast transform for the synthesis operator of $(t-x)^{1/2}{\bf T}(1-2x)$. Thus the complexity for computing the Chebyshev coefficients is $\mathcal{O}(cN)$ where $N$ is the truncation degree. The synthesis operator may then be applied in $\mathcal{O}(N \log N)$ complexity via the DCT. We note that the analysis operator is the reverse process of the synthesis operator.

For an unweighted expansion, we instead utilise the $R$ factors in the QR factorisation. If $c$ is even, we find that 
\begin{align}
\begin{split}
{\bf Q}^{t, (a,b,c)}(x) {\bf  f} =  {\bf T}(1-2x) R_T S^{-1}_{(a,b)} R_0^{-1} R_2^{-1} \cdots R_{c-2}^{-1} {\bf  f}.
\end{split}
\end{align}

\subsection{Differentiation}

The goal is to compute the entries of the banded matrix $D^{t,(a+1,b+1,c+1)}_{(a,b,c)}$ and weighted counterparts. A core concept for computing the differentiation matrices with low complexity is the following sparsity result.

\begin{theorem}[Theorem 2.20 in \cite{Gutleb2023}]
\label{th:banded-derivatives}
The differentiation matrix in the right-hand side of
\begin{align*}
\fdx {\bf Q}^{t,(a,b,c)}(x) = {\bf Q}^{t,(a+1,b+1,c+1)}(x) D^{t,(a+1,b+1,c+1)}_{(a,b,c)}
\end{align*}
only has nonzero entries on the first two super-diagonals.

\end{theorem}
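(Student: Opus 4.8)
The plan is to read off the entries of $D := D^{t,(a+1,b+1,c+1)}_{(a,b,c)}$ from orthonormality and reduce the claim to two degree counts. Set $w(x) = x^a(1-x)^b(t-x)^c$ and $W(x) = x^{a+1}(1-x)^{b+1}(t-x)^{c+1} = x(1-x)(t-x)\,w(x)$. Since ${\bf Q}^{t,(a+1,b+1,c+1)}(x)$ is orthonormal with respect to $W$, the $(m,n)$ entry is
\[
D_{mn} = \int_0^1 \left[\fdx Q_n^{t,(a,b,c)}(x)\right] Q_m^{t,(a+1,b+1,c+1)}(x)\, W(x)\,\mathrm{d}x.
\]
One bound is immediate: $\fdx Q_n^{t,(a,b,c)}$ has degree $n-1$, so it is a combination of $Q_m^{t,(a+1,b+1,c+1)}$ with $m \le n-1$ only, whence $D_{mn}=0$ for $m \ge n$ and $D$ is strictly upper-triangular. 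The content of the theorem is the complementary bound $m \ge n-2$, which I would obtain by integration by parts.

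Integrating by parts transfers the derivative onto the weighted factor,
\begin{align*}
D_{mn} &= \Big[Q_n^{t,(a,b,c)} Q_m^{t,(a+1,b+1,c+1)} W\Big]_0^1 \\
&\quad - \int_0^1 Q_n^{t,(a,b,c)}(x)\, \fdx\!\left[Q_m^{t,(a+1,b+1,c+1)}(x)\,W(x)\right]\,\mathrm{d}x,
\end{align*}
and the boundary term vanishes because $a,b > -1$ give $a+1,b+1 > 0$, so $W(0)=W(1)=0$. Writing $q = Q_m^{t,(a+1,b+1,c+1)}$ (degree $m$) and computing the logarithmic derivative of $W$, one finds
\[
\fdx\!\left[q\,W\right] = w(x)\,S(x), \qquad S(x) = x(1-x)(t-x)\,q'(x) + q(x)\,R(x),
\]
where $R(x) = (a+1)(1-x)(t-x) - (b+1)x(t-x) - (c+1)x(1-x)$ is a fixed quadratic. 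Hence
\[
D_{mn} = -\int_0^1 Q_n^{t,(a,b,c)}(x)\, S(x)\, w(x)\,\mathrm{d}x,
\]
and since $Q_n^{t,(a,b,c)}$ is orthogonal to all polynomials of degree $< n$ against $w$, this vanishes whenever $\deg S < n$. As $\deg S = m+2$, this gives $D_{mn}=0$ for $m \le n-3$, leaving nonzero entries only at $m = n-1, n-2$, i.e.\ on the first two super-diagonals.

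The step needing care is the claim $\deg S = m+2$: a priori the leading terms of $x(1-x)(t-x)q'$ and of $q\,R$ could cancel. Tracking leading coefficients with $q(x) = k_m x^m + \cdots$, and noting that $x(1-x)(t-x)$ has leading coefficient $1$ while $R$ has leading coefficient $(a+1)+(b+1)+(c+1)$, the degree-$(m+2)$ coefficient of $S$ is $k_m\big(m + a + b + c + 3\big)$. Since $a,b > -1$ and $c \ge 0$, this is strictly positive, so no cancellation occurs and $\deg S = m+2$ exactly. This also confirms that the second super-diagonal is generically populated, so the sparsity bound is sharp.
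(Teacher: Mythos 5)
Your proof is correct. Note, however, that the paper does not prove this statement at all: it is quoted verbatim as Theorem~2.20 of Gutleb et al.\ \cite{Gutleb2023} and used as a black box, so there is no in-paper argument to compare against. Your self-contained derivation is the standard one for Jacobi-type weights and it holds up: strict upper-triangularity follows from the degree count $\deg \fdx Q_n^{t,(a,b,c)} = n-1$; the lower band bound follows from integration by parts, where the boundary term dies because $a+1,b+1>0$ (and $t-x$ is bounded away from zero on $[0,1]$, so the $(t-x)^{c+1}$ factor is harmless), and orthogonality of $Q_n^{t,(a,b,c)}$ against polynomials of degree $<n$ kills everything with $m\le n-3$. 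Your care over the possible cancellation in the leading coefficient of $S$ is the right place to be careful, and the computed coefficient $k_m(m+a+b+c+3)$ is indeed nonvanishing throughout the paper's parameter range ($a,b>-1$, $c\ge 0$, and for $m=0$ the first term is absent anyway). The only cosmetic overreach is the closing claim that the second super-diagonal is ``generically populated'': the theorem only asserts an upper bound on the support, so this is not needed, though your leading-coefficient computation does in fact justify it (a polynomial of exact degree $n$ has a nonzero component along $Q_n^{t,(a,b,c)}$).
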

Similar results holds for the (partially) weighted counterparts. 
$$
D^{t,(a-1,b+1,c+1)}_{\mathrm{a},(a,b,c)}, D^{t,(a-1,b-1,c+1)}_{\mathrm{ab},(a,b,c)}\hbox{, and }D^{t,(a-1,b-1,c-1)}_{\mathrm{abc},(a,b,c)}
$$
all have a bandwidth of two and similarly for the other combination of weights, ${_\mathrm{b}}$,  ${_\mathrm{c}}$, ${_\mathrm{bc}}$, and ${_\mathrm{ac}}$.

Given the differentiation and Jacobi matrix for ${\bf Q}^{t,(a,b,c)}(x)$ we note that one obtains the differentiation matrix for ${\bf Q}^{t,(a,b,c+1)}(x)$ in $\mathcal{O}(N)$ complexity as follows:
\begin{align}
\begin{split}
\fdx {\bf Q}^{t,(a,b,c+1)}(x)
&= \fdx {\bf Q}^{t,(a,b,c)}(x) (R^{t,(a,b,c+1)}_{(a,b,c)})^{-1}\\
&= {\bf Q}^{t,(a+1,b+1,c+1)}(x) D^{t,(a+1,b+1,c+1)}_{(a,b,c)} (R^{t,(a,b,c+1)}_{(a,b,c)})^{-1}\\
&= {\bf Q}^{t,(a+1,b+1,c+2)}(x) R^{t,(a+1,b+1,c+2)}_{(a+1,b+1,c+1)} D^{t,(a+1,b+1,c+1)}_{(a,b,c)} (R^{t,(a,b,c+1)}_{(a,b,c)})^{-1}.
\end{split}\label{eq:diff1}
\end{align}
The connection coefficients are Cholesky factors \cite{Gutleb2023}:
\begin{align}
(R^{t,(a,b,c+1)}_{(a,b,c)})^\top R^{t,(a,b,c+1)}_{(a,b,c)} &= tI - X_{t,(a,b,c)}, \\
(R^{t,(a+1,b+1,c+2)}_{(a+1,b+1,c+1)})^\top R^{t,(a+1,b+1,c+2)}_{(a+1,b+1,c+1)} &= tI - X_{t,(a+1,b+1,c+1)}.
\end{align}
The unknown Jacobi matrix $X_{t,(a+1,b+1,c+1)}$ is obtained by noting
\begin{align}
(R^{t,(a+1,b+1,c+1)}_{(a,b,c)})^\top R^{t,(a+1,b+1,c+1)}_{(a,b,c)} &= X_{t,(a,b,c)} (I- X_{t,(a,b,c)}) (tI - X_{t,(a,b,c)}) 
\end{align}
and
\begin{align}
X_{t,(a+1,b+1,c+1)} = R^{t,(a+1,b+1,c+1)}_{(a,b,c)} X_{t,(a,b,c)} (R^{t,(a+1,b+1,c+1)}_{(a,b,c)})^{-1}.
\end{align}
Thanks to \cref{th:banded-derivatives} the triple matrix product on the right-hand side of \cref{eq:diff1} has a bandwidth of two. Hence, the entries of the differentiation matrix may be computed in linear complexity as one knows the upper bandwidth and thus we are not required to compute every entry of each column. A similar computation is conducted for (partially) weighted quasimatrices except the parameter associated with the weight is incremented down instead of up.

If one does not have the differentiation and Jacobi matrix of ${\bf Q}^{t,(a,b,c)}(x)$, then one recursively decrements the $c$ parameter until they hit a case where the the matrices are known. In the worst case, they reach the case where $c=0$ which is a scaling of the classical Jacobi polynomial case. In particular
\begin{align}
\fdx {\bf Q}^{t,(a,b,0)}(x) ={\bf Q}^{t,(a+1,b+1,0)}(x) D^{t,(a+1,b+1,0)}_{(a,b,0)}
\end{align}
where $D^{t,(a+1,b+1,0)}_{(a,b,0)}$ has an explicit expression and has a bandwidth of one \cite[Eq.~18.9.15]{dlmf}. Thus the worst case complexity to compute the differentiation matrix for an arbitrary ${\bf Q}^{t,(a,b,c)}(x)$ quasimatrix, $c \in \mathbb{N}$,  with zero pre-computation is $\mathcal{O}(cN)$. 

\subsection{Computing the triple matrix products in $\mathcal{O}(N)$ complexity}
\label{sec:complexity}

In this section we briefly discuss how to compute the triple matrix products in \eqref{eq:choleskymethod}, \eqref{eq:analysis} and \eqref{eq:qrmethodR} in $\mathcal{O}(N)$ complexity and provide a comparison of the Cholesky method and the two methods based on QR decomposition. For the $R$-based methods of Cholesky or QR, the object of concern is naturally the right-application of the inverse $R^{-1}$ which, in general, will not be banded. For the $Q$-based method via QR, the inverse computation is trivial via $Q^\top$, but in exchange neither $Q$ nor $Q^\top$ are banded.

As usual in computational contexts, the main idea for optimal complexity for the $R$ methods is to never explicitly construct the inverse matrix $R^{-1}$. Instead one notes that the bandwidth of the matrix product $RX$ to which we wish to right-apply $R^{-1}$ is known from the bandwidths of $R$ and $X$, where $X$ is a Jacobi matrix. Since the resulting matrix $RXR^{-1}$ is a Jacobi matrix and must thus be tridiagonal, one only needs to compute three entries per column with $N$-independent complexity, i.e. in $\mathcal{O}(1)$. Computing the entries required for an $N \times N$ principal subblock of the new Jacobi matrix thus has complexity $\mathcal{O}(N)$.

Obtaining optimal complexity for the computation of $Q^\top X Q$ in the $Q$ method is slightly more work, in particular, since $Q^\top$ generally has infinitely many subdiagonals. Applying $Q^\top$ na\"ively to the tridiagonal $X$ would thus require $O(N^2)$ operations. To avoid this we require access to the Householder matrices that comprise the orthogonal matrix $Q$. Fortunately, competitive QR factorisations of symmetric tridiagonal (and banded) matrices are already stored in such a way that $Q$ is not directly constructed and instead the information to compute the individual Householder matrices is retained. Using this information, the action of each Householder triple matrix product $H_j^\top XH_j$ only affects a small block (determined by the size of the \emph{lower} bandwidth of the $Q$ factor) on the band and elements in the top left of $H_j^\top XH_j$ are progressively finalised and no longer modified by applications of the remaining Householder matrices. This means that the algorithm only has to update a small block of $X$ along the band starting in the top left by multiplying with the active subblock of the Householder matrix on both sides. The size of the blocks to be updated does not change with $N$ if we further use the knowledge that the resulting matrix, as a Jacobi matrix of univariate orthogonal polynomials, must be tridiagonal and thus the modification to $X$ from each triple Householder product $H_j^\top XH_j$ can be computed in $\mathcal{O}(1)$ complexity.

\begin{remark}
The $Q$ and $R$ variants correspond to the two similarity transformations implied by the QR algorithm with a constant shift for symmetric tridiagonal matrices. Similarly, the $R$ method from Cholesky corresponds to the LR algorithm  with the same constant shift.
\end{remark}

In \cref{fig:choleskyqrcomparison:a} we show a plot demonstrating the $\mathcal{O}(N)$ computational complexity of the above-described algorithms and compare the performance of the three available approaches: Cholesky as in \eqref{eq:choleskymethod}, $Q$-based QR as in \eqref{eq:analysis} and $R$-based QR as in \eqref{eq:qrmethodR}. In \cref{fig:choleskyqrcomparison:b} we show the relative 2-norm error comparing single and double precision computations of computed Jacobi matrix principal subblocks. The stability of all three methods is self-evident. In our experiments, we have not observed more than a constant factor between all three approaches; certain parameter values lead to one method outperforming the others. By analogy to the QR and LR eigenvalue algorithms, our default preference is the $Q$-method.

\begin{figure}[h!]
\centering
\subfloat[Computational cost]{\includegraphics[width =0.475\textwidth]{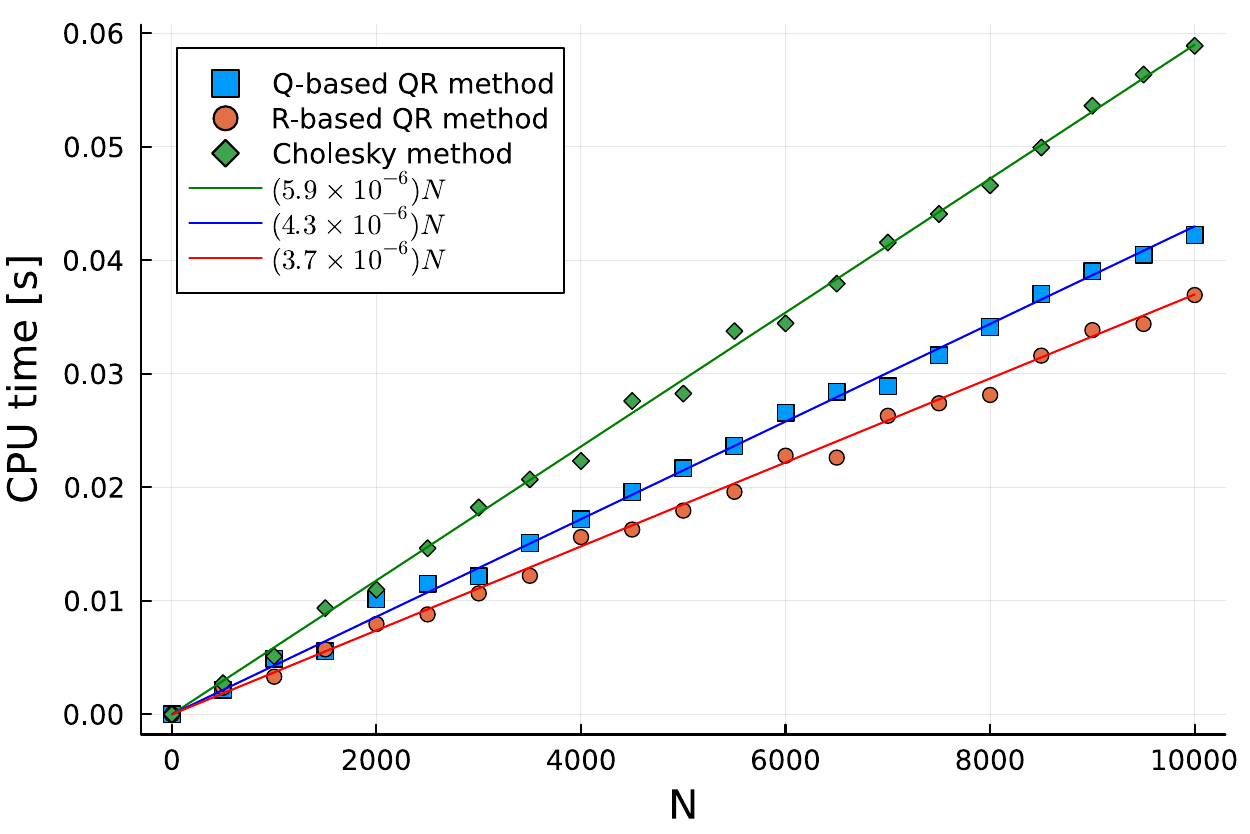}\label{fig:choleskyqrcomparison:a}}
\subfloat[2-norm relative error (single precision)]{\includegraphics[width =0.475\textwidth]{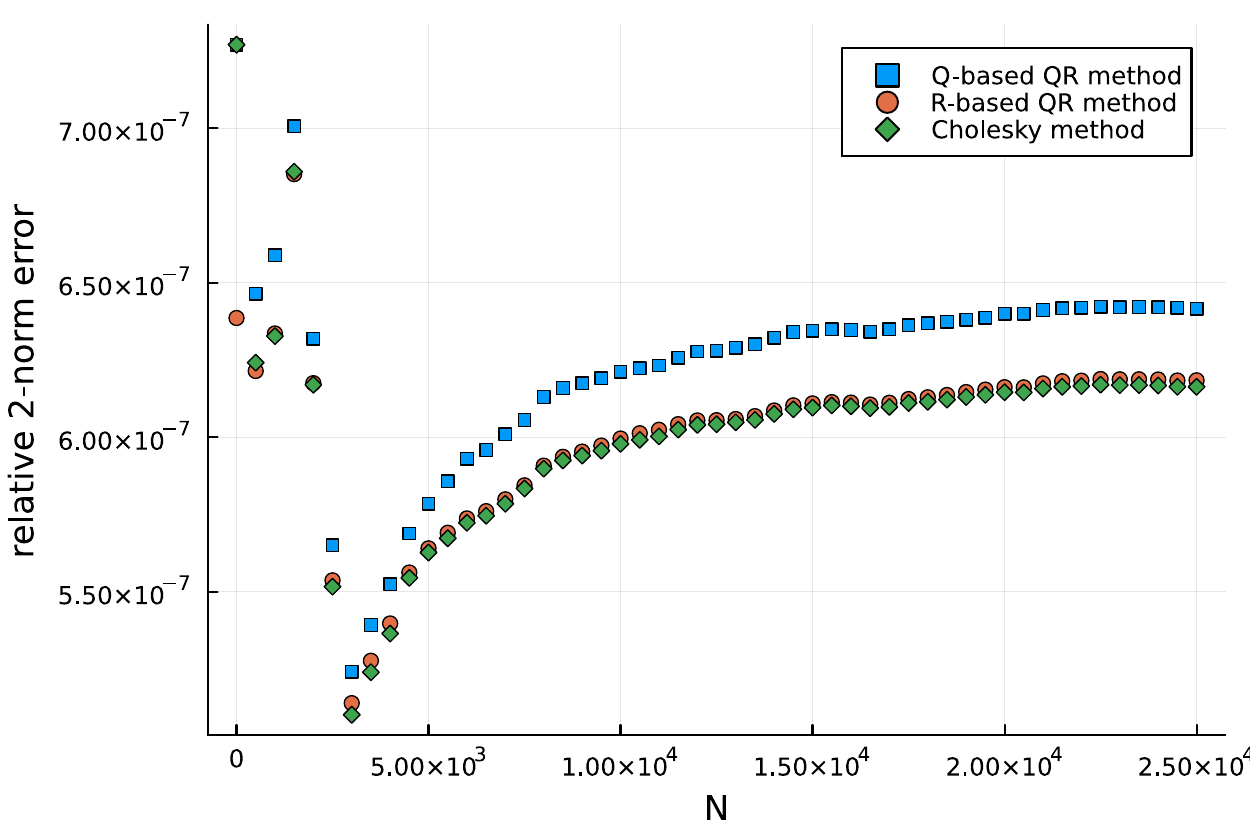}\label{fig:choleskyqrcomparison:b}}
\caption{(a) shows the $\mathcal{O}(N)$ complexity of the Cholesky as well as $Q$ and $R$ based QR methods for computing the $N \times N$ principal Jacobi matrix subblock of the semiclassical Jacobi polynomials $Q_n^{t,(1,1,22)}(x)$ from $Q_n^{t,(1,1,20)}(x)$. The Cholesky method requires twice the amount of work in this example as it proceeds in two steps. CPU timings were obtained on a Lenovo ThinkPad X1 Carbon laptop with a 12th Gen Intel i5-1250P CPU. (b) shows the relative 2-norm error for $N\times N$ principal subblocks of the Jacobi matrices comparing single to double precision computations.}\label{fig:choleskyqrcomparison}
\end{figure}

\section{Orthogonal polynomials on the annulus}
\label{sec:annulusOPs}

We now have the ingredients to define the orthogonal polynomials on the annulus as well as compute the connections between various families. Consider an inner radius  $0 < \rho < 1$ and without loss of generality take the outer radius to be one. Throughout this section $R^{t,(\alpha,\beta,\gamma)}_{\boldsymbol{\cdot}, (a,b,c)}$ and $D^{t,(\alpha,\beta,\gamma)}_{\boldsymbol{\cdot}, (a,b,c)}$ denote the lowering and differentiation matrix from the (potentially weighted) semiclassical Jacobi quasimatrix with parameters $(\alpha,\beta,\gamma)$ to $(a,b,c)$ as they were defined in \cref{sec:semiclassicalJacobi}.

On the two-dimensional disk ($\rho = 0$) we have the (complex-valued) generalised Zernike polynomials, written in terms of Jacobi polynomials as
\begin{align}
Z^{(b)}_{n,m}(x,y) \coloneqq r^{m} \E^{\I m \theta} P_{(n-m)/2}^{(b,m)}(2r^2-1), 
\end{align}
where $r^2 = x^2 + y^2$, $0 \leq m \leq n$ and 
\begin{align}
\begin{cases}
m = 0 & \text{if} \; n =0\\
m = 1,3, \dots, n & \text{if $n$ is odd},\\
m = 0, 2, \ldots, n & \text{if $n$ is even}.
\end{cases}
\label{eq:m2n}
\end{align}
These are polynomials in Cartesian coordinates $x$ and $y$ since $r^m \E^{\pm \I m \theta} = (x \pm \I y)^m$. Moreover, they are orthogonal on the unit disk with respect to the weight $(1-r^2)^b$. 

Note the real analogues are deduced by replacing the complex exponentials with sines and cosines. It is convenient for generalising to other dimensions to write these in terms of {\it harmonic polynomials}
\begin{align}
Y_{m,0}(x,y) &= \Im(x + \I y)^m = r^m \sin(m \theta), \\
Y_{m,1}(x,y) &= \Re(x + \I y)^m = r^m \cos(m \theta).
\end{align}
Thus the real-valued Zernike polynomials are:
\begin{align}
Z^{(b)}_{n,m,j}(x,y) =  Y_{m,j}(x,y) P_{(n-m)/2}^{(b,m)}(2r^2-1).  
\end{align}
For $n$ odd,  $m = 1, 3, \ldots, n$, and for $n$ even, $m = 0, 2, \ldots, n$. If $m = 0$ then $j = 1$, otherwise $j \in \{0,1\}$.  

\begin{definition}[Generalised Zernike annular polynomials]
\label{def:def:2Dannuli}
For $t = (1-\rho^2)^{-1}$, $r^2 = x^2 + y^2$, we define the \emph{generalised Zernike annular polynomials}, $Z^{\rho,(a,b)}_{n,m,j}$, as
\begin{align}
Z^{\rho,(a,b)}_{n,m,j}(x, y) \coloneqq Y_{m,j}(x,y) Q_{(n-m)/2}^{t,(a,b,m)}\fpr({1-r^2 \over 1 - \rho^2}) ,
\label{def:2Dannuli}
\end{align}
where for $n$ odd,  $m = 1, 3, \ldots, n$, and for $n$ even, $m = 0, 2, \ldots, n$. If $m = 0$ then $j = 1$, otherwise $j \in \{0,1\}$. 
\end{definition}
First note the functions as defined in \cref{def:2Dannuli} are all polynomials. 

\begin{proposition}[Orthogonality of generalised Zernike annular polynomials, cf.~{\cite[Eq.~69]{Ellison2023}}]
Let $r^2=x^2+y^2$ and $\Omega_\rho = \{ (x,y) : 0< \rho \leq r \leq 1 \}$. Then
$
Z^{\rho,(a,b)}_{n,m,j}
$
are  orthogonal polynomials with respect to
\begin{align}
\ip<f,g>_{\rho, (a,b)} = \iint_{\Omega_\rho} f(x,y) g(x,y) (1-r^2)^a (r^2 - \rho^2)^b  \dx \dy.
\end{align}
\end{proposition}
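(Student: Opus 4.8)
The plan is to pass to polar coordinates $(r,\theta)$, where $\dx\,\dy = r\,\D r\,\D\theta$, and exploit the fact that each $Z^{\rho,(a,b)}_{n,m,j}$ factors (up to the radial power $r^m$ hidden in $Y_{m,j}$) into a pure angular part $\sin(m\theta)$ or $\cos(m\theta)$ and a radial part depending on $r^2$ only. Writing $Y_{m,0}=r^m\sin(m\theta)$ and $Y_{m,1}=r^m\cos(m\theta)$, the inner product of $Z^{\rho,(a,b)}_{n,m,j}$ and $Z^{\rho,(a,b)}_{n',m',j'}$ separates as an angular integral of $\sin/\cos$ products times a radial integral. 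First I would invoke orthogonality of the trigonometric system on $[0,2\pi)$: the angular integral vanishes unless $m=m'$ and $j=j'$, and otherwise equals the nonzero constant $\pi$ (or $2\pi$ when $m=0$, $j=1$). This immediately kills all cross terms between distinct angular indices and reduces the problem to the case $m=m'$, $j=j'$.

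With $m=m'$ fixed, the surviving radial integral is, up to the positive angular constant,
\begin{equation*}
\int_0^1 r^{2m}\, Q_{(n-m)/2}^{t,(a,b,m)}\!\left(\tfrac{1-r^2}{1-\rho^2}\right) Q_{(n'-m)/2}^{t,(a,b,m)}\!\left(\tfrac{1-r^2}{1-\rho^2}\right) (1-r^2)^a (r^2-\rho^2)^b\, r\,\D r .
\end{equation*}
The key step is the substitution $s=\frac{1-r^2}{1-\rho^2}$, which maps $r\in[\rho,1]$ to $s\in[0,1]$. Using $t=(1-\rho^2)^{-1}$ one checks
\begin{equation*}
1-r^2=(1-\rho^2)s,\qquad r^2-\rho^2=(1-\rho^2)(1-s),\qquad r^2=\frac{t-s}{t},\qquad r\,\D r=-\tfrac{1-\rho^2}{2}\,\D s ,
\end{equation*}
so that $r^{2m}=t^{-m}(t-s)^m$. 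Substituting and flipping the limits, the radial integral becomes a strictly positive constant $\tfrac12\, t^{-m}(1-\rho^2)^{a+b+1}$ times
\begin{equation*}
\int_0^1 Q_{(n-m)/2}^{t,(a,b,m)}(s)\, Q_{(n'-m)/2}^{t,(a,b,m)}(s)\; s^a (1-s)^b (t-s)^m\,\D s .
\end{equation*}

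This last integral is precisely the orthonormality pairing for the semiclassical Jacobi family $Q^{t,(a,b,m)}$ with parameter $c=m$, defined in \cref{sec:semiclassicalJacobi}. Hence it vanishes unless $(n-m)/2=(n'-m)/2$, i.e.\ $n=n'$, and equals $1$ otherwise. Combining the angular and radial reductions shows the full two-dimensional integral vanishes unless $(n,m,j)=(n',m',j')$, which is the claimed orthogonality; since all the constant prefactors are strictly positive, the diagonal pairings are nonzero, confirming these are genuine orthogonal polynomials. The main obstacle is the bookkeeping in the change of variables: one must verify that the Cartesian factor $r^{2m}$ is exactly what converts the annular weight $(1-r^2)^a(r^2-\rho^2)^b$ into the semiclassical weight $s^a(1-s)^b(t-s)^m$, so that the angular mode $m$ aligns with the third semiclassical parameter $c=m$. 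Once that identity is in hand the result is immediate.
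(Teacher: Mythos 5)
Your argument is correct. The paper itself supplies no proof for this proposition, deferring instead to the cited reference \cite{Ellison2023}; your separation into the trigonometric orthogonality of the angular factors followed by the substitution $s=(1-r^2)/(1-\rho^2)$, which converts the radial factor $r^{2m}$ and the annular weight into exactly the semiclassical weight $s^a(1-s)^b(t-s)^m$ with $c=m$, is the standard route and all the bookkeeping checks out (the only slip is the lower limit $0$ in your first displayed radial integral, which should be $\rho$, as your subsequent change of variables correctly assumes).
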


%

\subsection{Connection and differentiation matrices}

In this subsection, our goal is to find the matrices $D$ and $L$ such that
\begin{align}
\Delta {\bf W}^{\rho, (1, 1)}(x,y)&= {\bf Z}^{\rho, (1, 1)}(x,y) D, \label{eq:annulus-Lap}\\
{\bf W}^{\rho, (1, 1)}(x,y)&= {\bf Z}^{\rho, (1, 1)}(x,y) R, \label{eq:annulus-Id}
\end{align}
where ${\bf W}^{\rho,(a,b)}(x,y)$ denotes the weighted version of ${\bf Z}^{\rho,(a,b)}(x,y)$. We discover that $D$ and $R$ are block tridiagonal and pentadiagonal, respectively, where each block is diagonal. 

Let ${\bf Z}^{\rho, (a,b)}_{m,j}(x,y)$ denote the quasimatrix containing only the ``$m$, $j$''-mode 2D annulus orthogonal polynomials with increasing $n$. For instance:
\begin{align}
{\bf Z}^{\rho, (a,b)}_{m,j} =
\left(
Z^{\rho, (a,b)}_{m,m,j}
\ | \ Z^{\rho, (a,b)}_{m+2,m,j}
\ | \  Z^{\rho, (a,b)}_{m+4,m,j} 
\ | \ \cdots
\right).
\end{align}
${\bf W}^{\rho, (1,1)}_{m,j}$ is defined analogously.

\begin{proposition}[Lowering of weighted generalised Zernike annular polynomials]
\label{prop:2d-low}
\begin{align}
{\bf W}^{\rho,(1,1)}_{m,j}(x,y) = (1-\rho^2)^2 {\bf Z}^{\rho,(1,1)}_{m,j}(x,y) R^{t,(1,1,m)}_{(0,0,m)} R^{t,(0,0,m)}_{\mathrm{ab}, (1,1,m)}.
\end{align}
\end{proposition}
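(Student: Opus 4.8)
The plan is to reduce the two-dimensional identity to the one-dimensional connection relations for semiclassical Jacobi polynomials recorded in \cref{eq:connection}, exploiting that both sides factor through the single fixed harmonic mode $(m,j)$. First I would separate the angular and radial parts: by \cref{def:2Dannuli}, for the fixed mode we have ${\bf Z}^{\rho,(1,1)}_{m,j}(x,y) = Y_{m,j}(x,y)\,{\bf Q}^{t,(1,1,m)}(s)$, where I set $s := (1-r^2)/(1-\rho^2)$ and ${\bf Q}^{t,(1,1,m)}$ is the quasimatrix of orthonormal semiclassical Jacobi polynomials evaluated at $s$. The weighted family carries only the annular weight with $(a,b)=(1,1)$, so ${\bf W}^{\rho,(1,1)}_{m,j}(x,y) = (1-r^2)(r^2-\rho^2)\,Y_{m,j}(x,y)\,{\bf Q}^{t,(1,1,m)}(s)$.

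Second, I would carry out the affine change of variables. Since $t=(1-\rho^2)^{-1}$, one checks $1-r^2=(1-\rho^2)\,s$ and $r^2-\rho^2=(1-\rho^2)(1-s)$, whence $(1-r^2)(r^2-\rho^2)=(1-\rho^2)^2\,s(1-s)$. Substituting gives ${\bf W}^{\rho,(1,1)}_{m,j} = (1-\rho^2)^2\,Y_{m,j}\,s(1-s)\,{\bf Q}^{t,(1,1,m)}(s)$, so the whole claim collapses to the purely one-dimensional statement $s(1-s)\,{\bf Q}^{t,(1,1,m)}(s) = {\bf Q}^{t,(1,1,m)}(s)\,R^{t,(1,1,m)}_{(0,0,m)}\,R^{t,(0,0,m)}_{\mathrm{ab},(1,1,m)}$.

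Third, I would read this 1D identity straight off \cref{eq:connection}. The unweighted connection yields ${\bf Q}^{t,(1,1,m)}(s)\,R^{t,(1,1,m)}_{(0,0,m)} = {\bf Q}^{t,(0,0,m)}(s)$, and the partially weighted ($\mathrm{ab}$) connection yields ${\bf Q}^{t,(0,0,m)}(s)\,R^{t,(0,0,m)}_{\mathrm{ab},(1,1,m)} = s(1-s)\,{\bf Q}^{t,(1,1,m)}(s)$. Chaining the two produces exactly the desired factor $s(1-s)$ while returning to the $(1,1,m)$ basis; multiplying back through by $(1-\rho^2)^2\,Y_{m,j}$ and reassembling the modes into the full quasimatrices then matches the left-hand side.

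The computation is essentially bookkeeping once the substitution $s=(1-r^2)/(1-\rho^2)$ is in hand, so I do not anticipate a genuine obstacle. The only points needing care are (i) recognising that the weighted family contributes only the $(1-r^2)(r^2-\rho^2)$ factor, the $m$-dependence being absorbed into $Y_{m,j}$ and the third semiclassical parameter $c=m$ rather than an explicit weight, and (ii) applying the two connection matrices in the correct order so that their product represents right-multiplication by $s(1-s)$ within the fixed mode. The existence and bandedness of the two $R$ factors are guaranteed by \cref{th:factorisations}, as the Cholesky factor of $tI-X_{t,(0,0,m)}$ and its transpose-type counterpart, so the factorisation underlying the identity is well defined.
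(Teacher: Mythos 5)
Your argument is correct and is essentially the paper's own proof: the same reduction to the fixed mode, the same substitution giving $(1-r^2)(r^2-\rho^2) = (1-\rho^2)^2\,\tau(1-\tau)$, and the same chaining of the unweighted connection $R^{t,(1,1,m)}_{(0,0,m)}$ with the $\mathrm{ab}$-weighted connection $R^{t,(0,0,m)}_{\mathrm{ab},(1,1,m)}$, applied in the same order. The only differences are notational ($s$ for $\tau$, $Y_{m,j}$ for $\Re[z^m]$ or $\Im[z^m]$).
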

$R^{t,(1,1,m)}_{(0,0,m)} R^{t,(0,0,m)}_{\mathrm{ab}, (1,1,m)}$ is a pentadiagonal matrix. Again the different $m$ and $j$ modes do not interact. When interlacing the different Fourier modes, we result in a block pentadiagonal matrix where each block is diagonal.
\begin{proof}
Let $z = x + \I y$.
\begin{align}
\begin{split}{\bf W}^{\rho,(1,1)}_{m,1}(x,y)
&= (1-r^2)(r^2-\rho^2)\Re[z^m] {\bf Q}^{t,(1,1,m)}(\tau)\\
&  = t^{-2} \Re[z^m] \tau(1-\tau) {\bf Q}^{t,(1,1,m)}(\tau)\\
& = t^{-2} \Re[z^m] {\bf Q}^{t,(0,0,m)}(\tau) R^{t,(0,0,m)}_{\mathrm{ab}, (1,1,m)}\\
&  = t^{-2} \Re[z^m] {\bf Q}^{t,(1,1,m)}(\tau) R^{t,(1,1,m)}_{(0,0,m)} R^{t,(0,0,m)}_{\mathrm{ab}, (1,1,m)}\\
& = (1-\rho^2)^2 {\bf Z}^{\rho,(1,1)}_{m,1}(x,y) R^{t,(1,1,m)}_{(0,0,m)} R^{t,(0,0,m)}_{\mathrm{ab}, (1,1,m)}.
\end{split}
\end{align}
The proof is similar for $j=0$. 
\end{proof}

In order to compute the Laplacian of the generalised Zernike annular polynomials, we follow \cite{VasilDisk} and use the decomposition
$$
\Delta =  4 \partial \bar\partial =4 \bar\partial  \partial 
$$
for the two linear operators\footnote{We have used slightly different operators from \cite{VasilDisk}, which used $D_\pm := {\partial \over \partial r} \pm {\I \over r} {\partial \over \partial \theta}$. This is to ensure we mapped from polynomials to polynomials.}
\begin{align*}
	2 \partial  = \partial_x - \I \partial_y  = \E^{- \I \theta} \pr(\partial_r - {\I \over r}\partial_\theta) \;\; \text{and} \;\;
	2 \dbar  = \partial_x + \I \partial_y  = \E^{- \I \theta} \pr(\partial_r + {\I \over r}\partial_\theta).
\end{align*}
\begin{lemma}
\label{lem:partial}
For $\tau = {1-r^2 \over 1-\rho^2}$, $t=(1-\rho^2)^{-1}$, $z = r \E^{\I \theta}$, and a differentiable function $f$, we have
\meeq{
	 \partial z^m f(\tau) = z^{m-1} (m f(\tau) - tr^2 f'(\tau)) = z^{m-1} (\tau-t)^{1-m} {\mathrm{d} \over \mathrm{d} \tau}[(\tau-t)^m f(\tau)], \ccr
	 \partial \bar z^m f(\tau) = - t \bar z^{m+1} f'(\tau), \ccr
	 \dbar z^m f(\tau) = - t z^{m+1} f'(\tau), \ccr
	 \dbar \bar z^m f(\tau) =  \bar z^{m-1} (m f(r^2) - t r^2 f'(r^2))  = \bar z^{m-1} (\tau-t)^{1-m}  {\mathrm{d} \over \mathrm{d} \tau}[(\tau-t)^m f(\tau)].
} 
\end{lemma}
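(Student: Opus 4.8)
The plan is to reduce everything to the polar-coordinate forms of the Wirtinger operators recorded just above the statement, writing $z^m = r^m\E^{\I m\theta}$ and $\bar z^m = r^m\E^{-\I m\theta}$, and then to differentiate the products $z^m f(\tau)$ and $\bar z^m f(\tau)$ by the product and chain rules. The only facts I need about the radial variable are that $\tau = t(1-r^2)$ is independent of $\theta$, so that $\partial_\theta \tau = 0$ and $\partial_r \tau = -2tr$, together with $\partial_\theta \E^{\pm\I m\theta} = \pm\I m\,\E^{\pm\I m\theta}$. I will also use repeatedly the elementary identity $\tau - t = -tr^2$, which is immediate from the definition of $\tau$ and is what converts the computed expressions into the compact logarithmic-derivative form.

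For each of the four cases I would apply the relevant polar form and collect terms. Taking $\partial z^m f(\tau)$ as the representative computation, the radial part gives $\partial_r[z^m f(\tau)] = \E^{\I m\theta}\big(m r^{m-1} f(\tau) - 2t r^{m+1} f'(\tau)\big)$, while the angular part contributes $-\tfrac{\I}{r}\partial_\theta[z^m f(\tau)] = m r^{m-1}\E^{\I m\theta} f(\tau)$; the two $f$-terms reinforce, and after multiplying by $\tfrac12\E^{-\I\theta}$ the result collapses to $z^{m-1}\big(m f(\tau) - t r^2 f'(\tau)\big)$. The structural point I would make explicit is the sign pattern governing the other three identities: for $\partial z^m f$ and $\dbar\bar z^m f$ the radial and angular $f$-contributions add, producing the surviving $m f(\tau)$ term, whereas for $\partial\bar z^m f$ and $\dbar z^m f$ they cancel exactly, leaving only the $-t\bar z^{m+1} f'(\tau)$ and $-t z^{m+1} f'(\tau)$ terms respectively. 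Each case is otherwise identical in mechanics to the one displayed.

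Finally, to obtain the compact forms in the first and fourth identities I would expand the right-hand side: using $\tau - t = -tr^2$,
\begin{align*}
(\tau-t)^{1-m}\frac{\mathrm{d}}{\mathrm{d}\tau}\big[(\tau-t)^m f(\tau)\big] = m f(\tau) + (\tau-t) f'(\tau) = m f(\tau) - t r^2 f'(\tau),
\end{align*}
which matches the expressions already derived. I expect the only genuine obstacle to be bookkeeping: keeping the signs of the factors of $\I$ and of the exponentials $\E^{\pm\I\theta}$ straight, and correctly tracking in each of the four cases whether the radial and angular $f$-terms reinforce or cancel. Everything beyond that is a direct application of the product and chain rules.
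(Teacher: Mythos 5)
Your computation is correct, and since the paper states \cref{lem:partial} without proof there is nothing to compare it against; the direct polar-coordinate verification you outline is the natural (and essentially the only) route. Your representative case $\partial z^m f(\tau)$ checks out exactly, the identity $\tau - t = -tr^2$ does convert $mf(\tau) + (\tau-t)f'(\tau)$ into the compact logarithmic-derivative form, and your observation about the sign pattern (radial and angular $f$-terms reinforcing for $\partial z^m$ and $\dbar\bar z^m$, cancelling for $\partial\bar z^m$ and $\dbar z^m$) is precisely what is going on.

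One caution, however, since you explicitly propose to use ``the polar-coordinate forms of the Wirtinger operators recorded just above the statement'': the paper's displayed formula $2\dbar = \partial_x + \I\partial_y = \E^{-\I\theta}\bigl(\partial_r + \tfrac{\I}{r}\partial_\theta\bigr)$ contains a sign typo in the exponential prefactor; the correct identity is $\partial_x + \I\partial_y = \E^{+\I\theta}\bigl(\partial_r + \tfrac{\I}{r}\partial_\theta\bigr)$. If you take the printed form at face value you would obtain $\dbar z^m f(\tau) = -t\,r^2 z^{m-1} f'(\tau)$ rather than the stated $-t z^{m+1} f'(\tau)$, and similarly the fourth identity would come out with the wrong power of $\bar z$. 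So in the two $\dbar$ cases you should either derive the polar form afresh from the Cartesian definition or note the corrected prefactor; with that fix your argument goes through verbatim. (Relatedly, the $f(r^2)$ and $f'(r^2)$ appearing in the fourth displayed identity are evidently typos for $f(\tau)$ and $f'(\tau)$, as the second equality there confirms.)
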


\begin{proposition}[Laplacian of generalised Zernike annular polynomials]
\label{prop:2d-lap}
Let  $t=(1-\rho^2)^{-1}$, then
\begin{align}
\Delta {\bf Z}^{\rho, (a,b)}_{m, j}(x,y) = 4 (1-\rho^2)^{-1} {\bf Z}^{\rho, (a+2, b+2)}_{m, j}(x,y)  D^{t,(a+2,b+2,m)}_{\mathrm{c},(a+1,b+1,m+1)} D^{t,(a+1,b+1,m+1)}_{(a,b,m)}.
\end{align} 
\end{proposition}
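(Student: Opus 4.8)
The plan is to reduce the statement to the action of the first-order operators $\partial$ and $\dbar$ on the complex monomials $z^m g(\tau)$ and $\bar z^m g(\tau)$, where $g(\tau) = {\bf Q}^{t,(a,b,m)}(\tau)$ is the radial quasimatrix and $\tau = (1-r^2)/(1-\rho^2)$. Since $Y_{m,1} = \Re z^m$ and $Y_{m,0} = \Im z^m$ and the Laplacian is real, it suffices to establish the identity for $z^m g(\tau)$ via $\Delta = 4\partial\dbar$ and for $\bar z^m g(\tau)$ via $\Delta = 4\dbar\partial$; the two computations are mirror images under $z \leftrightarrow \bar z$, $\partial \leftrightarrow \dbar$, and taking real and imaginary parts then delivers both $j\in\{0,1\}$ at once. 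The factoring is chosen so that the \emph{plain} $\tau$-derivative (which produces the unweighted matrix $D_1$) is applied innermost and the $(t-\tau)$-\emph{weighted} derivative (which produces $D_2$) is applied outermost.

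First I would compute the inner derivative. Applying $\dbar$ to $z^m g(\tau)$, \cref{lem:partial} gives $\dbar\, z^m g(\tau) = -t\, z^{m+1} g'(\tau)$, a plain $\tau$-derivative. I then invoke the unweighted differentiation relation to write $g'(\tau) = {\bf Q}^{t,(a+1,b+1,m+1)}(\tau)\, D_1$ with $D_1 := D^{t,(a+1,b+1,m+1)}_{(a,b,m)}$, so that $\dbar\,[z^m g] = -t\, z^{m+1} {\bf Q}^{t,(a+1,b+1,m+1)}(\tau)\, D_1$. Next I apply the outer operator $\partial$, this time using the weighted form from \cref{lem:partial}, namely $\partial\, z^{k} f(\tau) = z^{k-1}(\tau-t)^{1-k} \tfrac{\mathrm d}{\mathrm d\tau}[(\tau-t)^{k} f(\tau)]$ with $k = m+1$. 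Up to the sign $(\tau-t)^{m+1} = (-1)^{m+1}(t-\tau)^{m+1}$, the bracketed quantity is the $\mathrm c$-weighted object whose derivative is governed by \cref{eq:diff} and \cref{th:banded-derivatives}: $\tfrac{\mathrm d}{\mathrm d\tau}[(t-\tau)^{m+1} {\bf Q}^{t,(a+1,b+1,m+1)}(\tau)] = (t-\tau)^{m} {\bf Q}^{t,(a+2,b+2,m)}(\tau)\, D_2$ with $D_2 := D^{t,(a+2,b+2,m)}_{\mathrm c,(a+1,b+1,m+1)}$, the bandwidth-two matrix that lowers $c$ by one and raises $a,b$ by one.

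Collecting the factors of $(\tau - t)^{-m}(t-\tau)^{m}$ together with the accumulated sign $(-1)^{2m+1} = -1$ cancels the leading $-t$ and converts it to $+t$, yielding $\partial\dbar\,[z^m g] = t\, z^m {\bf Q}^{t,(a+2,b+2,m)}(\tau)\, D_2 D_1$. Multiplying by $4$, recognising $z^m {\bf Q}^{t,(a+2,b+2,m)}(\tau)$ as the complex ${\bf Z}^{\rho,(a+2,b+2)}_m$ from \cref{def:2Dannuli}, and substituting $t = (1-\rho^2)^{-1}$ reproduces the claimed formula for the $z^m$ part; the $\bar z^m$ computation is identical by the mirror symmetry, so taking $\Re$ and $\Im$ gives the result for both $j$.

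The routine content is the differentiation algebra, handled entirely by \cref{lem:partial} and the connection-to-$D$ relations. The one place demanding care is the sign and power bookkeeping in the penultimate step: one must correctly reconcile the $(\tau-t)^{1-k}$ prefactor produced by $\partial$ with the $(t-\tau)^{c}$ normalisation implicit in the definition of $D_2$ in \cref{eq:diff}, since an error in the parity of $(-1)^{m}$ would flip the sign of the overall prefactor. A secondary point worth checking explicitly is the identity $t r^2 = t - \tau$ underlying \cref{lem:partial}, which guarantees that the intermediate family is exactly $(a+1,b+1,m+1)$ and hence that $D_2 D_1$ is precisely the stated product of bandwidth-two factors.
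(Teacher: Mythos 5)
Your proposal is correct and follows essentially the same route as the paper: factor $\Delta = 4\partial\dbar$, apply $\dbar$ innermost via \cref{lem:partial} to produce the unweighted matrix $D^{t,(a+1,b+1,m+1)}_{(a,b,m)}$, then apply $\partial$ in its $(\tau-t)$-weighted form to produce $D^{t,(a+2,b+2,m)}_{\mathrm{c},(a+1,b+1,m+1)}$, and handle both $j$ by commuting the real Laplacian with $\Re$ and $\Im$. Your explicit sign bookkeeping $(\tau-t)^{-m}(t-\tau)^m(-1)^{m+1}=(-1)^{2m+1}$, which converts the leading $-t$ into $+t$, is exactly the step the paper performs implicitly between the fourth and fifth lines of \cref{eq:LapR}.
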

\begin{proof}
Consider $j=1$ and $z = x + \I y$. Note that $\Delta = 4 \partial \dbar$ is a real operator and, therefore, commutes with the real operator $\Re$.  Hence,
\begin{align}
\begin{split}
&\Delta {\bf Z}^{\rho, (a,b)}_{m, 1}(x,y) \\
&\indent = 4 \partial \dbar \Re[z^m] {\bf Q}^{t,(a,b,m)}(\tau)\\
&\indent= 4 \Re \partial \left(-t z^{m+1} {\bf Q}^{t, (a+1,b+1,m+1)}(\tau) D^{t,(a+1,b+1,m+1)}_{(a,b,m)} \right)\\
& \indent= -4t \Re z^m (\tau-t)^{-m} {\mathrm{d} \over \mathrm{d} \tau} \left [(\tau-t)^{m+1} {\bf Q}^{t,(a+1,b+1,m+1)}(\tau) \right]D^{t,(a+1,b+1,m+1)}_{(a,b,m)}\\
&\indent = 4t\Re[z^m] {\bf Q}^{t,(a+2,b+2,m)}(\tau) D^{t,(a+2,b+2,m)}_{\mathrm{c},(a+1,b+1,m+1)} D^{t,(a+1,b+1,m+1)}_{(a,b,m)}\\
&\indent = 4t {\bf Z}^{\rho, (a+2, b+2)}_{m, 1}(x,y) D^{t,(a+2,b+2,m)}_{\mathrm{c},(a+1,b+1,m+1)} D^{t,(a+1,b+1,m+1)}_{(a,b,m)}.
\end{split}
\label{eq:LapR}
\end{align}
The second and third lines in \cref{eq:LapR} hold thanks to \cref{lem:partial}. By replacing $\Re[z^m]$ with $\Im[z^m]$, the exact same steps hold for $j=0$.
\end{proof}

$D^{t,(a+2,b+2,m)}_{\mathrm{c},(a+1,b+1,m+1)} D^{t,(a+1,b+1,m+1)}_{(a,b,m)}$ is an upper-triangular matrix with a bandwidth of three. We derive a similar relationship for the weighted version $W^{\rho,(1,1)}_{n,m,j}(x,y)$.
\begin{proposition}[Laplacian of weighted generalised Zernike annular polynomials]
\label{prop:2d-Lap}
\begin{align}
\Delta {\bf W}^{\rho, (1,1)}_{m, j}(x,y) = 4(1-\rho^2) {\bf Z}^{\rho, (1, 1)}_{m, j}(x,y) D^{t,(1,1,m)}_{\mathrm{c},(0,0,m+1)}  D^{t,(0,0,m+1)}_{\mathrm{ab},(1,1,m)}.
\end{align}
\end{proposition}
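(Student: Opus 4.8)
The plan is to mirror the proof of \cref{prop:2d-lap}, exploiting that $\Delta = 4\partial\dbar$ is a real operator and hence commutes with $\Re$ (resp.\ $\Im$); it therefore suffices to treat the $j=1$ case and then repeat verbatim with $\Re[z^m]$ replaced by $\Im[z^m]$. First I would rewrite the weighted quasimatrix purely in the $\tau$ variable. Using $t=(1-\rho^2)^{-1}$ one has $1-r^2 = t^{-1}\tau$ and $r^2-\rho^2 = t^{-1}(1-\tau)$, so the weight collapses to $(1-r^2)(r^2-\rho^2) = t^{-2}\tau(1-\tau)$ and, exactly as in the proof of \cref{prop:2d-low}, ${\bf W}^{\rho,(1,1)}_{m,1}(x,y) = t^{-2}\Re[z^m]\,\tau(1-\tau){\bf Q}^{t,(1,1,m)}(\tau)$ with $z=x+\I y$.

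Next I would apply the two factors of $\Delta = 4\partial\dbar$ in turn. Applying $\dbar$ first, \cref{lem:partial} gives $\dbar z^m f(\tau) = -t z^{m+1} f'(\tau)$ with $f(\tau) = \tau(1-\tau){\bf Q}^{t,(1,1,m)}(\tau)$; here $f'$ is precisely the partially weighted derivative governed by $D^{t,(0,0,m+1)}_{\mathrm{ab},(1,1,m)}$, and the key simplification is that for $a=b=1$ the residual weight $\tau^{a-1}(1-\tau)^{b-1}$ appearing on the right-hand side is trivial, so $f'(\tau) = {\bf Q}^{t,(0,0,m+1)}(\tau)D^{t,(0,0,m+1)}_{\mathrm{ab},(1,1,m)}$ carries no leftover factor. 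Applying $\partial$ to the resulting $z^{m+1}{\bf Q}^{t,(0,0,m+1)}(\tau)$, the first identity of \cref{lem:partial} produces $z^m(\tau-t)^{-m}\tfrac{\mathrm{d}}{\mathrm{d}\tau}[(\tau-t)^{m+1}{\bf Q}^{t,(0,0,m+1)}(\tau)]$; after pulling out $(-1)^{m+1}$ the inner derivative is exactly the $\mathrm{c}$-weighted differentiation $\tfrac{\mathrm{d}}{\mathrm{d}\tau}[(t-\tau)^{m+1}{\bf Q}^{t,(0,0,m+1)}(\tau)] = (t-\tau)^m{\bf Q}^{t,(1,1,m)}(\tau)D^{t,(1,1,m)}_{\mathrm{c},(0,0,m+1)}$, so the intermediate family lands on $(0,0,m+1)$ and the two matrices compose in the required order $D^{t,(1,1,m)}_{\mathrm{c},(0,0,m+1)}D^{t,(0,0,m+1)}_{\mathrm{ab},(1,1,m)}$.

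Finally I would collect the scalar prefactors. The powers $(\tau-t)^{-m}(t-\tau)^m$ contribute $(-1)^m$, which together with the $(-1)^{m+1}$ above and the $-t$ from the $\dbar$ step collapse to a single factor $t$; multiplying by the $4t^{-2}$ out front yields $4t^{-1} = 4(1-\rho^2)$, and recognising $\Re[z^m]{\bf Q}^{t,(1,1,m)}(\tau) = {\bf Z}^{\rho,(1,1)}_{m,1}(x,y)$ gives the claim. The main obstacle is purely bookkeeping: keeping the $(t-\tau)$ versus $(\tau-t)$ sign conventions consistent across the two applications of \cref{lem:partial}, and noticing at the outset that the $\mathrm{ab}$-differentiation leaves no residual weight when $a=b=1$ (this is what makes the subsequent $\partial$ produce a clean single $\mathrm{c}$-differentiation rather than a doubled $(t-\tau)$ weight). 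There is no analytic difficulty beyond \cref{lem:partial} and the weight/sparsity structure already recorded in \cref{th:banded-derivatives}.
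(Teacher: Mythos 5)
Your proposal is correct and follows essentially the same route as the paper: rewrite the weight as $t^{-2}\tau(1-\tau)$, apply $\dbar$ then $\partial$ via \cref{lem:partial} to pick up $D^{t,(0,0,m+1)}_{\mathrm{ab},(1,1,m)}$ and then $D^{t,(1,1,m)}_{\mathrm{c},(0,0,m+1)}$, and collect the prefactor $4t^{-1}=4(1-\rho^2)$. The only difference is that you make the $(\tau-t)$ versus $(t-\tau)$ sign bookkeeping fully explicit, which the paper leaves implicit in its final displayed step; your accounting is consistent with the conventions of \cref{eq:diff}.
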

\begin{proof}
Consider $j=1$ and $z = x + \I y$. Then,
\begin{align}
\begin{split}
&\Delta {\bf W}^{\rho, (1,1)}_{m,1}(x,y)\\
& \indent = 4 \partial \dbar \left[(1-r^2)(r^2-\rho^2)\Re[z^m] {\bf Q}^{t,(1,1,m)}(\tau)\right]\\
& \indent = 4t^{-2} \Re \partial \dbar \left[ z^m \tau(1-\tau) {\bf Q}^{t,(1,1,m)}(\tau)\right]\\
& \indent= -4t^{-1}\Re \partial \left[z^{m+1} {\bf Q}^{t,(0,0,m+1)}(\tau) D^{t,(0,0,m+1)}_{\mathrm{ab},(1,1,m)} \right]\\
& \indent = -4t^{-1} \Re z^m (\tau-t)^{-m} {\mathrm{d} \over \mathrm{d} \tau} \left [(\tau-t)^{m+1} {\bf Q}^{t,(0,0,m+1)}(\tau) \right]  D^{t,(0,0,m+1)}_{\mathrm{ab},(1,1,m)}\\
& \indent = 4t^{-1}\Re[z^{m}] {\bf Q}^{t,(1,1,m)}(\tau)  D^{t,(1,1,m)}_{\mathrm{c},(0,0,m+1)} D^{t,(0,0,m+1)}_{\mathrm{ab},(1,1,m)}\\
& \indent = 4t^{-1} {\bf Z}^{\rho, (1, 1)}_{m, 1}(x,y) D^{t,(1,1,m)}_{\mathrm{c},(0,0,m+1)} D^{t,(0,0,m+1)}_{\mathrm{ab},(1,1,m)}.
\end{split}
\label{eq:LapR-2}
\end{align} 
The third and fourth lines in \cref{eq:LapR-2} hold thanks to \cref{lem:partial}. The case $j=0$ follows similarly.
\end{proof}
The matrix $D^{t,(1,1,m)}_{\mathrm{c},(0,0,m+1)}  D^{t,(0,0,m+1)}_{\mathrm{ab},(1,1,m)}$  is tridiagonal. An important feature is the different $m$ and $j$ modes do not interact. Thus we obtain tridiagonal Laplacians for each Fourier mode.

\subsection{Analysis \& synthesis}
\label{sec:zernike-analysis-synthesis}

By computing the connection, Jacobi, and differentiation matrices for hierarchies of semiclassical Jacobi families as described in \cref{sec:semiclassicalJacobi}, we may compute the connection and differentiation matrices for the Zernike annular polynomials in optimal complexity. In particular the connection and differentiation matrices for all the Fourier modes up to the truncation mode and degree $M=N$ in Propositions \labelcref{prop:2d-low} and \labelcref{prop:2d-lap} are computed in $\mathcal{O}(N^2)$ complexity.

We now discuss the technique for $\mathcal{O}(N^2 \log (N))$ complexity analysis and synthesis operators. Suppose that the goal is to apply the synthesis operator to the coefficient vector ${\bf f}$ to evaluate ${\bf Z}^{\rho,(a,b)}(x,y){\bf f}$ on the grid $(x,y) \in \{(r_k \cos(l \theta), r_k \sin(l \theta))\}$ where
\begin{align}
r_k &= \sqrt{\cos^2\left(\frac{2k+1}{4K}\pi\right) + \rho^2 \sin^2\left(\frac{2k+1}{4K}\pi\right)} \;\; \text{for} \;\; 0 \leq k \leq K-1,\\
\theta_l & = \frac{2l}{L} \pi \;\; \text{for} \;\; 0 \leq l \leq L-1.
\label{eq:zernike-grid}
\end{align}
Here $K = \floor{(N+1)/2}+1$, $L = 4K-3$, and $N$ is the truncation degree. Given the even truncation degree $N$, we first rearrange the coefficients $f_{n,m,j}$ in ${\bf f}$ into the $N/2 \times (N+1)$ rectangular matrix:
\begin{align}
\begin{pmatrix}
f_{0,0,1} & f_{1,1,0} & f_{1,1,1} & f_{2, 2, 0} & f_{2, 2, 1} & \cdots & f_{N, N, 0} & f_{N, N, 1} \\
f_{2,0,1} & f_{3,1,0} & f_{3,1,1} & f_{4, 2, 0} & f_{4, 2, 1} & \cdots & 0 & 0 \\
\vdots & \vdots & \vdots & \vdots & \vdots & \ddots & \vdots & \vdots \\
f_{N-2,0,1} & f_{N-1,1,0} & f_{N-1,1,1} & f_{N, 2, 0} & f_{N, 2, 1} & \cdots & 0 & 0 \\
f_{N,0,1} & 0 & 0 & 0 & 0 & \cdots & 0 & 0
\end{pmatrix}.
\label{eq:Z-coeffs}
\end{align}
Each column corresponds to a separate Fourier mode $(m,j)$. The matrix of coefficients \cref{eq:Z-coeffs} is converted to a matrix of Chebyshev--Fourier series coefficients for which a fast transform exists via a DCT and FFT. The conversion is best understood column-wise. Consider the last column and let ${\bf  f}_{N, N, 1} = ( f_{N, N, 1} \; 0 \; \cdots \; 0)^\top \in \mathbb{R}^{N/2}$. Note that
\begin{align}
\begin{split}
{\bf Z}^{\rho, (a,b)}_{N,1}(x,y) {\bf  f}_{N, N, 1}
&= r^{N} \cos(N\theta) {\bf Q}^{t,(a,b,N)}(\tau) {\bf  f}_{N, N, 1}\\
&=  t^{-N/2} (t-\tau)^{N/2} \cos(N\theta) {\bf Q}^{t,(a,b,N)}(\tau) {\bf  f}_{N, N, 1}.
\end{split}
\end{align}
Thus, as derived in \cref{sec:semiclassical-analysis}, the final column in the matrix for the equivalent Chebyshev--Fourier coefficients is given by $t^{-N/2} R_T S^{-1}_{(a,b)} Q_0 Q_2\cdots Q_{N-2} {\bf  f}_{N, N, 1}$. Therefore, the final column is a product of $\mathcal{O}(N)$ $Q$-factors, each of which costs $\mathcal{O}(N)$ to apply, culminating in $\mathcal{O}(N^2)$ work. Na\"ively, the complexity for computing the Chebyshev--Fourier coefficients for all the columns is $\mathcal{O}(N^3)$. However, this conversion has considerable speedups if one utilises Givens rotations and a butterfly application resulting in $\mathcal{O}(N^2 \log N)$ complexity. We refer the interested reader to \cite{Slevinsky2019} for a thorough description and numerical analysis of these algorithms for spherical harmonic polynomials.

\subsection{Non-constant coefficients}
As for classical sparse spectral methods, band-limited operators are available for non-constant coefficients (NCCs). In particular one retains sparsity if the NCC can be accurately represented as a low-degree polynomial in $x$, $y$, or $r^2$. The implementation of NCCs in the spectral method hinges on the two propositions in this subsection.

\begin{proposition}[$r^2$-Jacobi matrix]
\label{prop:jacobi-matrix-r2}
Let $t = (1-\rho^2)^{-1}$ and $X_{t,(a,b,c)}$ denote the Jacobi matrix for the semiclassical Jacobi quasimatrix ${\bf Q}^{t,(a,b,c)}$. Then
\begin{align}
r^2 {\bf Z}_{m,j}^{\rho,(a,b)}(x,y) &=  {\bf Z}_{m,j}^{\rho,(a,b)}(x,y) (I-t^{-1}X_{t,(a,b,m)}), \label{eq:r2-2}
\end{align}
where $I$ is the identity matrix.
\end{proposition}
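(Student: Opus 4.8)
The plan is to reduce the two-dimensional statement to a one-dimensional identity about the semiclassical Jacobi Jacobi matrix, exactly as the preceding Laplacian propositions do. First I would unfold the definition \cref{def:2Dannuli}: the quasimatrix $\vec{Z}_{m,j}^{\rho,(a,b)}(x,y)$ consists of the harmonic polynomial $Y_{m,j}(x,y)$ times the semiclassical Jacobi quasimatrix $\vec{Q}^{t,(a,b,m)}(\tau)$ evaluated at $\tau = (1-r^2)/(1-\rho^2)$. Since $r^2$ multiplies the radial part only and commutes with $Y_{m,j}$, the whole problem collapses to understanding how multiplication by $r^2$ acts on $\vec{Q}^{t,(a,b,m)}(\tau)$.

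The key step is the change of variables. From $\tau = (1-r^2)/(1-\rho^2)$ and $t = (1-\rho^2)^{-1}$ one solves directly for $r^2 = 1 - (1-\rho^2)\tau = 1 - t^{-1}\tau$. Thus multiplying by $r^2$ is the same as multiplying the function of $\tau$ by the affine factor $1 - t^{-1}\tau$. I would then invoke the Jacobi matrix relation for the semiclassical family, namely $\tau\,\vec{Q}^{t,(a,b,m)}(\tau) = \vec{Q}^{t,(a,b,m)}(\tau)\,X_{t,(a,b,m)}$ (this is the defining three-term recurrence for the orthonormal family, available from \cref{sec:semiclassicalJacobi}). Substituting gives
\begin{align*}
r^2\,\vec{Q}^{t,(a,b,m)}(\tau) = (1 - t^{-1}\tau)\,\vec{Q}^{t,(a,b,m)}(\tau) = \vec{Q}^{t,(a,b,m)}(\tau)\,(I - t^{-1}X_{t,(a,b,m)}),
\end{align*}
after which re-attaching the factor $Y_{m,j}(x,y)$ and reassembling into the quasimatrix $\vec{Z}_{m,j}^{\rho,(a,b)}$ yields \cref{eq:r2-2} immediately. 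The proof would therefore mirror the opening lines of the Laplacian proofs: take $z = x + \I y$, write $Y_{m,j}$ as $\Re[z^m]$ or $\Im[z^m]$, carry out the substitution, and note the computation is identical for $j=0$ and $j=1$ since $r^2$ touches neither the angular factor nor the index $j$.

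There is essentially no serious obstacle here, as the result is a direct consequence of the affine change of variables together with the definition of the Jacobi matrix; the only thing to verify carefully is the arithmetic of the change of variables, i.e.\ that the constant term contributes the identity $I$ and the $\tau$ term contributes $-t^{-1}X_{t,(a,b,m)}$ with the correct sign. The mildest subtlety worth a sentence is that the three-term recurrence holds for the orthonormal polynomials as a two-sided quasimatrix identity, so that the multiplication operator becomes right-multiplication by the tridiagonal symmetric matrix $X_{t,(a,b,m)}$; everything else is routine and the different Fourier modes manifestly decouple because $r^2$ is rotationally invariant.
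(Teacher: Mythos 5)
Your proposal is correct and follows exactly the paper's own argument: write $\tau = t(1-r^2)$ so that $r^2 = 1 - t^{-1}\tau$, apply the three-term recurrence $\tau\,{\bf Q}^{t,(a,b,m)}(\tau) = {\bf Q}^{t,(a,b,m)}(\tau)\,X_{t,(a,b,m)}$, and reattach the angular factor $Y_{m,j}$, which is untouched by multiplication by $r^2$. Nothing is missing.
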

\begin{proof}
Consider $\tau = t(1-r^2)$. Then
\begin{align}
\begin{split}
r^2 {\bf Z}_{m,j}^{\rho,(a,b)}(x,y) 
&= r^2 Y_{m,j}(x,y) {\bf Q}^{t,(a,b,m)}(\tau) \\
&=    Y_{m,j}(x,y) (1-t^{-1}\tau) {\bf Q}^{t,(a,b,m)}(\tau) \\
&=    Y_{m,j}(x,y) {\bf Q}^{t,(a,b,m)}(\tau) (I - t^{-1} X_{t,(a,b,m)}) \\
&=   {\bf Z}_{m,j}^{\rho,(a,b)}(x,y)  (I - t^{-1} X_{t,(a,b,m)}).
\end{split}
\end{align}
Thus \cref{eq:r2-2} holds.
\end{proof}

\begin{proposition}[$x$, $y$-Jacobi matrix]
\label{prop:jacobi-matrix-x-y}
Let $t = (1-\rho^2)^{-1}$ and $R^{t,(a,b,c+1)}_{(a,b,c)}$ denote the raising matrix for the semiclassical Jacobi families ${\bf Q}^{t,(a,b,c)} = {\bf Q}^{t,(a,b,c+1)}  R^{t,(a,b,c+1)}_{(a,b,c)}$. Then
\begin{align}
\begin{split}
&x {\bf Z}_{m,j}^{\rho,(a,b)}(x,y)\\ &\indent=  \frac{1}{2} \left[t^{-1} {\bf Z}_{m-1,j}^{\rho,(a,b)}(x,y) (R^{t,(a,b,m)}_{(a,b,m-1)})^\top + {\bf Z}_{m+1,j}^{\rho,(a,b)}(x,y) R^{t,(a,b,m+1)}_{(a,b,m)}\right],
\end{split} \label{eq:x1} \\
\begin{split}
&y {\bf Z}_{m,1}^{\rho,(a,b)}(x,y) \\ &\indent= \frac{1}{2} \left[ - t^{-1} {\bf Z}_{m-1,0}^{\rho,(a,b)}(x,y) (R^{t,(a,b,m)}_{(a,b,m-1)})^\top + {\bf Z}_{m+1,0}^{\rho,(a,b)}(x,y) R^{t,(a,b,m+1)}_{(a,b,m)}\right],  \end{split} \label{eq:x2} \\
\begin{split}
&y {\bf Z}_{m,0}^{\rho,(a,b)}(x,y) \\ &\indent =  \frac{1}{2} \left[t^{-1} {\bf Z}_{m-1,1}^{\rho,(a,b)}(x,y) (R^{t,(a,b,m)}_{(a,b,m-1)})^\top - {\bf Z}_{m+1,1}^{\rho,(a,b)}(x,y) R^{t,(a,b,m+1)}_{(a,b,m)}\right]. \end{split} \label{eq:y2} 
\end{align}
\end{proposition}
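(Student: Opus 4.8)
The plan is to reduce all three identities to elementary manipulations of the complex monomial $z^m$, where $z = x + \I y = r\E^{\I\theta}$, and then to invoke the connection-matrix machinery of \cref{sec:semiclassicalJacobi}. First I would record the algebraic facts $z \cdot z^m = z^{m+1}$ and $\bar z \cdot z^m = r^2 z^{m-1}$, together with $x = \tfrac{1}{2}(z + \bar z)$ and $y = \tfrac{1}{2\I}(z - \bar z)$. Multiplying through gives $x z^m = \tfrac{1}{2}(z^{m+1} + r^2 z^{m-1})$ and $y z^m = \tfrac{1}{2\I}(z^{m+1} - r^2 z^{m-1})$. Taking real and imaginary parts (using that $x$, $y$, $r^2$ are real) then yields the purely geometric recurrences for the harmonic polynomials $Y_{m,j}$, namely $x Y_{m,j} = \tfrac{1}{2}(Y_{m+1,j} + r^2 Y_{m-1,j})$, $y Y_{m,1} = \tfrac{1}{2}(Y_{m+1,0} - r^2 Y_{m-1,0})$, and $y Y_{m,0} = \tfrac{1}{2}(r^2 Y_{m-1,1} - Y_{m+1,1})$. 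The sign patterns and the flip of the $j$-index for $y$ fall out automatically from $\Re(-\I w) = \Im w$ and $\Im(-\I w) = -\Re w$; this bookkeeping is where I would be most careful.

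Next I would attach the radial factor ${\bf Q}^{t,(a,b,m)}(\tau)$ and convert the two kinds of terms separately. For the raising term $Y_{m+1,j}\,{\bf Q}^{t,(a,b,m)}(\tau)$, I would apply the defining raising relation ${\bf Q}^{t,(a,b,m)} = {\bf Q}^{t,(a,b,m+1)} R^{t,(a,b,m+1)}_{(a,b,m)}$, which immediately produces ${\bf Z}^{\rho,(a,b)}_{m+1,j} R^{t,(a,b,m+1)}_{(a,b,m)}$ since $Y_{m+1,j}{\bf Q}^{t,(a,b,m+1)}(\tau) = {\bf Z}^{\rho,(a,b)}_{m+1,j}$ by \cref{def:2Dannuli}.

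The crux is the lowering term carrying the $r^2$ factor. The key observation is that, since $\tau = t(1-r^2)$, one has $r^2 = t^{-1}(t-\tau)$, so $r^2{\bf Q}^{t,(a,b,m)}(\tau) = t^{-1}(t-\tau){\bf Q}^{t,(a,b,m)}(\tau)$. The factor $(t-x)$ is precisely the weight ratio between the $c=m-1$ and $c=m$ semiclassical families, so \cref{th:factorisations} (the Cholesky relation $w(x){\bf Q}(x) = {\bf P}(x)R^\top$ with ${\bf P} = {\bf Q}^{t,(a,b,m-1)}$, $w(x) = t-x$, and $R = R^{t,(a,b,m)}_{(a,b,m-1)}$) gives $(t-x){\bf Q}^{t,(a,b,m)}(x) = {\bf Q}^{t,(a,b,m-1)}(x)(R^{t,(a,b,m)}_{(a,b,m-1)})^\top$. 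Evaluating at $x = \tau$ and multiplying by $t^{-1}$ turns $r^2 Y_{m-1,j}{\bf Q}^{t,(a,b,m)}(\tau)$ into $t^{-1}{\bf Z}^{\rho,(a,b)}_{m-1,j}(R^{t,(a,b,m)}_{(a,b,m-1)})^\top$. Substituting the raising and lowering conversions into the three harmonic recurrences from the first step produces \cref{eq:x1}, \cref{eq:x2}, and \cref{eq:y2} directly; the remaining $j=0$ case of \cref{eq:x1} is identical with $\Re$ replaced by $\Im$ throughout. The only genuine subtlety, and the step I would double-check, is that $r^2$ maps to the \emph{transpose} of the raising matrix rather than to a lowering matrix, which hinges on recognising $(t-x)$ as the relevant weight factor in \cref{th:factorisations}.
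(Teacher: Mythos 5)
Your proposal is correct and follows essentially the same route as the paper: split $x Y_{m,j}$ (resp.\ $y Y_{m,j}$) into a raising term $Y_{m+1,j}$ and a lowering term $r^2 Y_{m-1,j}$, convert the former with ${\bf Q}^{t,(a,b,m)} = {\bf Q}^{t,(a,b,m+1)} R^{t,(a,b,m+1)}_{(a,b,m)}$ and the latter via $r^2 = t^{-1}(t-\tau)$ together with the weighted relation $(t-\tau){\bf Q}^{t,(a,b,m)}(\tau) = {\bf Q}^{t,(a,b,m-1)}(\tau)(R^{t,(a,b,m)}_{(a,b,m-1)})^\top$ from \cref{th:factorisations}. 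The only cosmetic difference is that you derive the harmonic recurrences from $x = \tfrac12(z+\bar z)$, $y = \tfrac{1}{2\I}(z-\bar z)$, whereas the paper uses the product-to-sum identity $2\cos\alpha\cos\beta = \cos(\alpha-\beta)+\cos(\alpha+\beta)$; these are the same computation, though your version handles all the sign patterns and $j$-index flips uniformly.
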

\begin{proof}
We prove \cref{eq:x1} when $j=1$ and note that the case $j=0$ as well as \cref{eq:x2}--\cref{eq:y2} all follow similarly. Let $\tau = t(1-r^2)$ and first consider \cref{eq:x1}:
\begin{align}
\begin{split}
&x {\bf Z}_{m,1}^{\rho,(a,b)}(x,y) 
= (r\cos \theta) r^m \cos(m\theta) {\bf Q}^{t,(a,b,m)}(\tau)\\
&\indent = \frac{1}{2}\left[r^{m+1} \cos((m-1)\theta) {\bf Q}^{t,(a,b,m)}(\tau)
+ r^{m+1} \cos((m+1)\theta) {\bf Q}^{t,(a,b,m)}(\tau)
\right],
\end{split}
\label{eq:jacobix1}
\end{align}
where we use the identity $2\cos\alpha \cos\beta = \cos(\alpha-\beta) +  \cos(\alpha+\beta)$. Note that, since $r^2 = t^{-1} (t-\tau)$,
\begin{align}
\begin{split}
r^2{\bf Q}^{t,(a,b,m)}(\tau) 
= t^{-1} (t-\tau) {\bf Q}^{t,(a,b,m)}(\tau)
= t^{-1} {\bf Q}^{t,(a,b,m-1)}(\tau)(R^{t,(a,b,m)}_{(a,b,m-1)})^\top,
\end{split}
\label{eq:jacobix2}
\end{align}
where the final equality follows as a consequence of \cref{th:factorisations}. Substituting \cref{eq:jacobix2} into \cref{eq:jacobix1}, we deduce that
\begin{align}
\begin{split}
&x {\bf Z}_{m,1}^{\rho,(a,b)}(x,y) \\
&\indent = \frac{1}{2} t^{-1} r^{m-1} \cos((m-1)\theta) {\bf Q}^{t,(a,b,m-1)}(\tau) (R^{t,(a,b,m)}_{(a,b,m-1)})^\top\\
&\indent \indent + \frac{1}{2}r^{m+1} \cos((m+1)\theta) {\bf Q}^{t,(a,b,m+1)}(\tau) R^{t,(a,b,m+1)}_{(a,b,m)} \\
 &\indent =  \frac{1}{2} t^{-1} {\bf Z}_{m-1,1}^{\rho,(a,b)}(x,y) (R^{t,(a,b,m)}_{(a,b,m-1)})^\top
 + \frac{1}{2} {\bf Z}_{m+1,1}^{\rho,(a,b)}(x,y) R^{t,(a,b,m+1)}_{(a,b,m)}.
\end{split}
\label{eq:jacobix3}
\end{align}
\end{proof}

\section{Sparse spectral method for PDEs}
\label{sec:pdes}
In this section we detail how one may use the Chebyshev--Fourier series and the generalised Zernike annular polynomials to construct a \emph{sparse} spectral method for the Helmholtz equation. Consider the Helmholtz  equation, for some $\lambda \in \mathbb{R}$ (potentially $\lambda \in L^\infty(\Omega_\rho)$),
\begin{align}
\begin{split}
\Delta u(x,y) + \lambda u(x,y) &= f(x,y) \;\; \text{in} \;\; \Omega_\rho, \quad
u = 0 \;\; \text{on} \;\; \partial \Omega_\rho. 
\end{split}
\label{eq:helmholtz}
\end{align}

\textbf{Boundary conditions.}
\label{sec:bcs}
We consider two techniques to enforce boundary conditions:
\begin{enumerate}
\itemsep=0pt
\item Incorporate the boundary conditions into the basis. A homogeneous Dirichlet boundary condition is automatically satisfied on the annulus if all the basis polynomials contain the factor $(1-r^2)(r^2-\rho^2)$.
\item Provided the equation decouples across Fourier modes, then one may add the boundary conditions as two additional constraints. In general, these additional constraints result in two dense rows.
\end{enumerate}

An advantage of the second technique is that it is amendable to enforcing inhomogeneous Dirichlet boundary conditions as well as Neumann and Robin boundary conditions with only small modifications \cite{Olver2013}. This results in an almost banded matrix which can be solved optimally \cite[Sec.~5]{Olver2013}. However, it may cause severe ill-conditioning if constructed na\"ively. Tau methods are a robust technique for enforcing boundary conditions in this way. Moreover if they are combined with a preconditioner (via a Schur complement) then one may recover the bandedness, cf.~\cite{Lanczos1938, Ortiz1969} and \cite[Sec.~B]{Burns2020}. In the first technique the boundary conditions are automatically satisfied, however, incorporating nontrivial boundary conditions can be difficult. 


\textbf{Chebyshev--Fourier series.}
Consider $x = r\cos(\theta)$ and $y=r\sin(\theta)$. Then a change of variables for \cref{eq:helmholtz} gives:
 \begin{align}
\left(r^2  \frac{\partial^2}{\partial r^2} + r \frac{\partial}{\partial r} + \frac{\partial^2}{\partial \theta^2} + \lambda r^2 \right ) u(r,\theta) = r^2 f(r,\theta). 
\end{align}
Let ${\bf C}^{(2)}$ denote the quasimatrix of the ultraspherical polynomials \cite[Sec.~18.3]{dlmf}. Recall the definition of $r_\rho$ from \cref{eq:r-affine}. Consider the matrices $D$, $R$, and $X$ such that
\begin{align*}
\left[r^2  \frac{\partial^2}{\partial r^2} + r \frac{\partial}{\partial r}\right] {\bf C}^{(2)}(r_\rho) &= {\bf T}(r_\rho) D, \;\; {\bf C}^{(2)}(r_\rho)R = {\bf T}(r_\rho) ,\;
 r{\bf C}^{(2)}(r_\rho) = {\bf C}^{(2)}(r_\rho)X.
\end{align*}
It is known that $D$ and $R$ have a bandwidth of five \cite{Olver2013} and $X$ is tridiagonal. The algorithm for solving the Helmholtz equation on the annulus via a scaled-and-shifted Chebyshev--Fourier series is provided in \cref{alg:scaled-and-shifted}. Boundary conditions are enforced via the second technique.
\begin{algorithm}[ht]
\caption{Scaled-and-shifted Chebyshev--Fourier series}
\label{alg:scaled-and-shifted}
\begin{algorithmic}[1]
\State{\texttt{EXPAND.} Expand the right-hand side $f(r,\theta)$ in the ${\bf T}(r_\rho)\otimes {\bf F}(\theta)$-basis. Truncate at polynomial degree $N$ and $2N-1$ Fourier modes for $(N+1) (2N+1)$ coefficients.}

The solve is then decomposed into a parallelisable sequence of one-dimensional solves:

\State{\texttt{ASSEMBLE.} (Lazily) assemble the matrices for $m \in \{0,1,\dots,N\}$:
\begin{align}
\begin{pmatrix}
{\bf T}(-1) \\
{\bf T}(1) \\
D - m^2 R + \lambda X^2 R
\end{pmatrix}.
\label{eq:chebyshev-fourier-A}
\end{align}
and truncate to form the matrix $A_m \in \mathbb{R}^{(N+2) \times (N+2)}$.} 
\State{\texttt{EXTRACT.} Extract and truncate the coefficients from ${\bf f}$ associated with the $(m, j)$-mode to form the coefficient vector ${\bf f}_{m,j} \in \mathbb{R}^N$.
}
\State{\texttt{SOLVE.} Solve the $2N+1$ linear systems of size $(N+2)\times(N+2)$, $A_m {\bf u}_{m, j} = \left(0, \; 0, \; (X^2 R {\bf f}_{m,j})^\top \right)^\top$, where $m \in \{0,1,\dots, N\}$, $j \in \{0,1\}$, $(m,j) \neq (0,0)$.}
\State{\texttt{INTERLACE.} Interlace ${\bf u}_{m, j} \in \mathbb{R}^{N+2}$ to form the coefficient vector ${\bf u}$.}
\State{$u(r,\theta) \approx \left[{\bf T}(r_\rho) \otimes {\bf F}(\theta)\right]  {\bf u}$.}
\end{algorithmic}
\end{algorithm}

The expansion of the right-hand side $f(r,\theta)$ in step 1 of \cref{alg:scaled-and-shifted} may be achieved via a DCT and FFT for the Chebyshev and Fourier parts, respectively, resulting in a complexity of $\mathcal{O}(N^2 \log(N))$. In step 4, the first two rows in the right-hand side vector enforce the homogeneous Dirichlet boundary condition. The coefficient vector ${\bf f}_{m,j}$ is an expansion of $f(r,\theta)$ in the ${\bf T}(r_\rho) \otimes {\bf F}(\theta)$ basis and must be mapped to an expansion of $r^2 f(r,\theta)$ in the ${\bf C}^{(2)}(r_\rho) \otimes {\bf F}(\theta)$ basis. This is precisely $X^2 R {\bf f}_{m,j}$.

\textbf{Generalised Zernike annular polynomials.}
An algorithm for solving \cref{eq:helmholtz} with a generalised Zernike annular polynomial discretisation is provided in \cref{alg:2d-annuli}. The Dirichlet boundary condition is directly incorporated into the basis by expanding the solution in weighted generalised Zernike annular polynomials.

\begin{algorithm}[ht]
\caption{Generalised Zernike annular polynomials}
\label{alg:2d-annuli}
\begin{algorithmic}[1]
\State{\texttt{EXPAND.} Expand $f$ in the ${\bf Z}^{\rho,(1,1)}$-basis and obtain the coefficient vector $\mathbf{f}$.}

The solve is then decomposed into a parallelisable sequence of one-dimensional solves:

\State{\texttt{ASSEMBLE.}  Per Propositions \labelcref{prop:2d-low} and \labelcref{prop:2d-Lap}, (lazily) assemble the matrices $4 t^{-1}  D^{t,(1,1,m)}_{\mathrm{c},(0,0,m+1)}   D^{t,(0,0,m+1)}_{\mathrm{ab},(1,1,m)}+ \lambda t^{-2}  R^{t,(1,1,m)}_{(0,0,m)} R^{t,(0,0,m)}_{\mathrm{ab}, (1,1,m)}$ and truncate at degree $N$ to form the matrix $A_m \in \mathbb{R}^{k \times k}$ where $k = \ceil{\frac{N+1-m}{2}}$, $m\in \{0,1,\dots,N\}$.}
\State{\texttt{EXTRACT.} Extract the coefficients of ${\bf f}$ for each Fourier mode and truncate to find the coefficient vectors ${\bf f}_{m,j} \in \mathbb{R}^k$, $m \in \{0,1,\dots, N\}$, $j \in \{0,1\}$, $(m,j) \neq (0,0)$.
}
\State{\texttt{SOLVE.} Solve (in parallel) the $2N+1$ sparse linear systems of size $k \times k$, $A_{m} \mathbf{u}_{m,j} = \mathbf{f}_{m,j}$.}
\State{\texttt{INTERLACE.} Interlace the coefficient vectors $\mathbf{u}_{m,j}$ to construct $\mathbf{u}$.}
\State{$u(x,y) \approx {\bf W}^{\rho,(1,1)}(x,y) \mathbf{u}$.}
\end{algorithmic}
\end{algorithm}

\begin{remark}
We note that in Algorithms \labelcref{alg:scaled-and-shifted} and \labelcref{alg:2d-annuli}, one solves $2N+1$ linear systems where $N$ is the truncation degree. With Chebyshev--Fourier series, the size of all the $2N+1$ linear systems is $(N+2)\times(N+2)$ with two dense rows and a leading nonadiagonal band. Whereas the Zernike annular polynomial induced linear systems are tridiagonal with decreasing size with increasing $m$, starting at the size $\ceil{(N+1)/2} \times \ceil{(N+1)/2}$ and ending at the size $1 \times 1$ as also remarked in \cref{fig:poisson-conditioning}. 
\end{remark}

\begin{remark}[Non-constant coefficients]
\label{rem:ncc}
NCCs can be incorporated into \cref{alg:2d-annuli} by utilizing either \cref{prop:jacobi-matrix-r2} or \cref{prop:jacobi-matrix-x-y}. For instance consider the coefficient $\lambda(r^2) \approx \sum_{n=0}^{N_\lambda} \lambda_n T^{\rho}_n(r^2)$ with $N_\lambda \in \mathbb{N}_0$ small, $\lambda_n \in \mathbb{R}$ and $T_n^{\rho}$ denotes the $n$-th degree first-kind Chebyshev polynomials scaled to the interval $[\rho,1]$. Then in order to incorporate $\lambda(r^2)$, into \cref{alg:2d-annuli}, in line 2 we instead assemble the matrices $4 t^{-1}  D^{t,(1,1,m)}_{\mathrm{c},(0,0,m+1)}   D^{t,(0,0,m+1)}_{\mathrm{ab},(1,1,m)}+ t^{-2}  \Lambda_{m} R^{t,(1,1,m)}_{(0,0,m)} R^{t,(0,0,m)}_{\mathrm{ab}, (1,1,m)}$ where 
\begin{align}
 \Lambda_{m} = \sum_{n=0}^{N_\lambda} \lambda_n T^\rho_n(I-t^{-1}X_{t,(1,1,m)}). \label{eq:ncc1}
\end{align}
We note that the sum in \cref{eq:ncc1} can be efficiently evaluated via Clenshaw's algorithm and $\Lambda_m$ has bandwidth $N_\lambda + 1$. 

An NCC dependent solely on $r^2$ preserves the Fourier mode decoupling. This is not true if the NCC depends on $x$ and $y$.
\end{remark}

\section{Spectral element method} 
\label{sec:spectral-element}
In this section we construct a spectral element method for the strong formulation of \cref{eq:helmholtz}. The disk mesh is constructed such that the first cell is the innermost disk, and the subsequent cells are concentric annuli stacked around the disk as visualised in \cref{fig:disk-annulus}. A mesh for the annulus omits the innermost disk cell. 
\begin{figure}[h!]
\centering
\includegraphics[width =0.2 \textwidth]{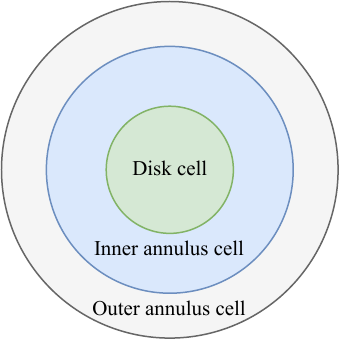}
\caption{The first three cells of a mesh for the spectral element method on a disk. The disk and annuli cells may vary in thickness.}\label{fig:disk-annulus}
\end{figure}

For ease of notation, we consider a two-element method although the methodology extends to an arbitrary number of elements. In the disk and annulus cell, we utilise the Zernike and Zernike annular polynomials, respectively. Consider a mesh with an inner disk cell of radius $\rho$ and the surrounding annulus with inner radius $\rho$ and outer radius 1. Let $\tilde{\bf Z}^{(0)}(r,\theta) \coloneqq {\bf Z}^{(0)}(r/\rho,\theta)$. The goal is to approximate the solution $u(x,y)$ of the Helmholtz equation \cref{eq:helmholtz} with the expansion 
\begin{align}
u(x,y) = 
\begin{pmatrix}
\tilde{\bf Z}^{(0)}(x,y) &  {\bf Z}^{\rho,(0,0)}(x,y) 
\end{pmatrix}
\begin{pmatrix}
\mathbf{u}_0\\
\mathbf{u}_1
\end{pmatrix}.
\end{align}
For any Fourier mode $(m,j)$ we have the lowering relationships
\begin{align*}
\tilde{\bf Z}^{(0)}_{m,j} &= \tilde{\bf Z}^{(2)}_{m,j} R^{(2)}_{m,(0)}, \;\; \Delta \tilde{\bf Z}^{(0)}_{m,j} = \tilde{\bf Z}^{(2)}_{m,j} \tilde{D}^{(2)}_{m,(0)}\\
 {\bf Z}^{\rho,(0,0)}_{m,j} &= {\bf Z}^{\rho,(2,2)}_{m,j} R^{\rho,(2,2)}_{m,(0,0)}, \;\; \Delta {\bf Z}^{\rho,(0,0)}_{m,j} = {\bf Z}^{\rho,(2,2)}_{m,j} D^{\rho,(2,2)}_{m,(0,0)}.
\end{align*}
Note that $\tilde{D}^{(2)}_{m,(0)}$ has a bandwidth of one, $R^{(2,2)}_{m,(0,0)}$ and $D^{\rho,(2,2)}_{m,(0,0)}$ have a bandwidth of three, and $R^{\rho,(2,2)}_{m,(0,0)}$ has a bandwidth of five. 

The Helmholtz operator with Dirichlet boundary conditions takes the following form for each Fourier mode,  $m \in \mathbb{N}_0$, $j \in \{0,1\}$, $(m,j) \neq (0,0)$;
\begin{align}
A_{m} = \begin{pmatrix}
0 & {\bf Z}_{m,0}^{\rho,(0,0)}(1,0) & 0 \\
\tilde{\bf Z}_{m,0}^{(0)}(\rho,0) & - {\bf Z}_{m,0}^{\rho,(0,0)}(\rho,0)  &  1-\mathrm{e}^{-(m+1)} \\
\frac{\mathrm{d}}{\mathrm{d}r} \tilde{\bf Z}_{m,0}^{(0)}(\rho,0) & -\frac{\mathrm{d}}{\mathrm{d}r} {\bf Z}_{m,0}^{\rho,(0,0)}(\rho,0) & 0  \\
\tilde{D}^{(2)}_{m,(0)} + \lambda R^{t,(2)}_{m,(0)} & 0 &  0 \\
0 & D^{\rho,(2,2)}_{m,(0,0)} + \lambda R^{\rho,(2,2)}_{m,(0,0)} &  E_m  
\end{pmatrix}.
\label{eq:Am}
\end{align}
The first row in $A_m$ enforces the boundary condition at $r=1$. As we are discretising the equation in strong form, we are required to enforce continuity of the expansion at $r=\rho$ between cells as well as the outward normal of the derivative, $\partial_n = \frac{\mathrm{d}}{\mathrm{d}r}$, of the expansion. This is achieved by the second and third rows, respectively. The bottom left $2 \times 2$ block matrix contains the recurrence relations between the coefficients and has bandwidth three (one if $\lambda =0$) in the top left block  and bandwidth five (three if $\lambda =0$)  in the bottom right block. The final column is derived from a tau-method and ensures that, after a square truncation, $A_m$ is well-conditioned. 

Let $\tilde{A}_m$ denote the matrix $A_m$ without the last column. Given an even truncation polynomial degree $N$, a discretisation truncates $\tilde{A}_m$ to a rectangular matrix of size $(2k+3) \times (2k+2)$ where $k = \ceil{\frac{N+1-m}{2}}$. The $(1,2)$, $(2,1)$, $(2,2)$, $(3,1)$ and $(3,2)$ blocks are truncated at the size $1 \times (k+1)$. The $(4,1)$ and $(5,2)$ blocks are truncated at the size $k \times (k+1)$. Unfortunately, this truncation leads to a rectangular overdetermined system. This is a well-understood phenomenon when dealing with continuity and boundary conditions in spectral discretisations of problems formulated in strong form. Removing a row from either the $(4,1)$ or $(5,2)$ block leads to severe ill-conditioning for increasing $N$ and $m$ and artificial numerical pollution in the solutions. The final column in $A_m$ resolves this issue and may be understood via the tau-method, c.f.~\cite{Lanczos1938, Ortiz1969} and \cite[Sec.~B]{Burns2020}. 

 In the remainder of this subsection we motivate the final column in $A_m$ via the tau method. For those unfamiliar with tau methods, we note that it is sufficient to consider the purely algebraic description above for the implementation.  Consider the Poisson equation (\cref{eq:helmholtz} with $\lambda = 0)$.  For each Fourier mode, we augment the Poisson equation with the three tau-functions
\begin{align}
\begin{split}
&\tau_{0,m,j}(x, y)  = c_{0,m,j} Z^{\rho, (2,2)}_{N_m,m,j}(x,y), \;\;
 \tau_{1,m,j}(x, y) = c_{1, m,j} Z^{\rho, (2,2)}_{N_m+2,m,j}(x,y), \\ 
& \tau_{2,m,j}(x, y)   = c_{2, m,j} \tilde{Z}^{(2)}_{N_m+2,m,j}(x,y),
\end{split} 
\end{align}
where $N_m = N$ if $m$ is even, otherwise $N_m = N-1$. Thus for each Fourier mode we wish to solve
\begin{align}
\begin{split}
&\Delta \tilde{\bf Z}^{(0)}_{m,j}(x,y) \mathbf{u}_{0, m, j} + \Delta {\bf Z}^{\rho,(0,0)}_{m,j}(x,y) \mathbf{u}_{1, m, j} + \sum_{i=0}^2 c_{i,m,j} \tau_{i,m,j}(x,y) \\
&  = \tilde{\bf Z}^{(2)}_{m,j}(x,y) \tilde{D}^{(2)}_{m,(0)} \mathbf{u}_{0, m, j} +  c_{2, m,j}  \tilde{Z}^{(2)}_{N_m+2,m,j}(x,y)\\
&  \indent + {\bf Z}^{\rho,(2,2)}_{m,j}(x,y)  D^{\rho,(2,2)}_{m,(0,0)} \mathbf{u}_{1, m, j} + \sum_{i=0}^1 c_{i,m,j} Z^{\rho, (2,2)}_{N_m+2i,m,j}(x,y)
= f(x,y),
\end{split}
\label{eq:tau1}
\end{align}
with the boundary condition, where $x^2+y^2=1$,
\begin{align}
{\bf Z}^{\rho,(0,0)}_{m,j}(x,y) \mathbf{u}_{1, m, j} &= 0,
\label{eq:tau2}
\end{align}
and the continuity conditions, where $x^2+y^2=\rho^2$,
\begin{align}
\tilde{\bf Z}^{(0)}_{m,j}(x,y) \mathbf{u}_{0, m, j} - {\bf Z}^{\rho,(0,0)}_{m,j}(x,y) \mathbf{u}_{1, m, j} +  (1-\mathrm{e}^{-(m+1)}) c_{0,m,j} &= 0, \label{eq:tau3} \\
\frac{\mathrm{d}}{\mathrm{d}r} \tilde{\bf Z}^{(0)}_{m,j}(x,y) \mathbf{u}_{0, m, j} - \frac{\mathrm{d}}{\mathrm{d}r} {\bf Z}^{\rho,(0,0)}_{m,j}(x,y) \mathbf{u}_{1, m, j} &= 0. \label{eq:tau4}
\end{align}
The final two tau-functions, $i\in\{1,2\}$, are those chosen in a typical ultraspherical method and result in two columns and associated rows with only a one on the diagonal. Thus these may be immediately row eliminated. However, $\tau_{0,m,j}(x,y)$ must be included as part of the linear system, resulting in the (truncated) column vector $E_m = (0, \cdots, \;\; 0, \; \; 1)^\top \in \mathbb{R}^k$. The inclusion of the first tau-function in the continuity condition is atypical. Here it significantly improves the conditioning of $A_m$ as $m \to \infty$. This is because, at $x^2+y^2=\rho^2$,  $\tilde{Z}^{(0)}_{n,m,j}(x,y) \approx \mathcal{O}(1)$, whenever $(n-m) \approx \mathcal{O}(1)$, whereas ${Z}^{\rho,(0,0)}_{n,m,j}(x,y) \approx \mathcal{O}(\rho^m)$. Left unchecked this introduces ill-conditioning for increasing $m$.

 Let ${\bf 0}$ denote the infinite zero matrix and let the subscripts $_{n,k}$ denote truncating an infinite matrix at the $n$th row and $k$th column. We use $_n$ to denote truncating an infinite vector at the $n$th row. Then, more explicitly, after truncating the expansion of the right-hand side at degree $N$, \cref{eq:tau1} induces, for each Fourier mode:
\begin{align}
\begin{split}
&\left( \tilde{Z}^{(2)}_{m,m,j} \; \cdots \; \tilde{Z}^{(2)}_{N_m+2,m,j} | Z^{\rho, (2,2)}_{m,m,j} \cdots Z^{\rho, (2,2)}_{N_m+2,m,j} \right)\\
&\times
\begin{pmatrix}
[\tilde{D}^{(2)}_{m,(0)}]_{k, k+1} & {\bf 0}_{k, k+1}  &  {\bf 0}_{k, 1}& {\bf 0}_{k, 1} & {\bf 0}_{k, 1} \\
{\bf 0}_{1, k+1} & {\bf 0}_{1, k+1} & 0 & 0 & 1\\
{\bf 0}_{k, k+1} & [D^{\rho,(2,2)}_{m,(0,0)}]_{k, k+1} & [E_m]_{k} &  {\bf 0}_{k, 1} &  {\bf 0}_{k, 1}\\
 {\bf 0}_{1, k+1} &  {\bf 0}_{1, k+1}  & 0 & 1 & 0\\
\end{pmatrix}
\begin{pmatrix}
[{\bf u}_{0,m,j}]_k\\
[{\bf u}_{1,m,j}]_k\\
c_{0,m,j}\\
c_{1,m,j}\\
c_{2,m,j}
\end{pmatrix}\\
& \indent = \left( \tilde{Z}^{(2)}_{m,m,j} \; \cdots \; \tilde{Z}^{(2)}_{N_m,m,j} | Z^{\rho, (2,2)}_{m,m,j} \cdots Z^{\rho, (2,2)}_{N_m,m,j} \right) \begin{pmatrix}
[{\bf f}_{0,m,j}]_k\\
[{\bf f}_{1,m,j}]_k
\end{pmatrix}
\end{split} \label{eq:tau5}
\end{align}
The boundary condition \cref{eq:tau2} induces at $x^2+y^2= 1$:
\begin{align}
\left(Z^{\rho, (0,0)}_{m,m,j} \cdots Z^{\rho, (0,0)}_{N_m,m,j} \right) [{\bf u}_{1,m,j}]_k =0, \label{eq:tau6}
\end{align}
and the continuity conditions \cref{eq:tau3} and \cref{eq:tau4} induce at $x^2+y^2 = \rho^2$
\begin{align}
\begin{split}
&\left( \tilde{Z}^{(0)}_{m,m,j} \; \cdots \; \tilde{Z}^{(0)}_{N_m,m,j} | -Z^{\rho, (0,0)}_{m,m,j} \cdots -Z^{\rho, (0,0)}_{N_m,m,j} \right) 
\begin{pmatrix}
[{\bf u}_{0,m,j}]_k\\
[{\bf u}_{1,m,j}]_k
\end{pmatrix}\\
&\indent+ c_{0,m,j} (1-\E^{-(m+1)}) = 0,
\end{split} \label{eq:tau8}
\end{align}
and
\begin{align}
\frac{\mathrm{d}}{\mathrm{d}r}\left( \tilde{Z}^{(0)}_{m,m,j} \; \cdots \; \tilde{Z}^{(0)}_{N_m,m,j} | -Z^{\rho, (0,0)}_{m,m,j} \cdots -Z^{\rho, (0,0)}_{N_m,m,j} \right) 
\begin{pmatrix}
[{\bf u}_{0,m,j}]_k\\
[{\bf u}_{1,m,j}]_k
\end{pmatrix} = 0. \label{eq:tau9}
\end{align}
Concatenating \cref{eq:tau5}--\cref{eq:tau9} together, and row eliminating $c_{1,m,j}$ and $c_{2,m,j}$, results in $A_m$.


\begin{remark}
A similar tau-method is used for the spectral element method where the basis is the Chebyshev--Fourier series on the annuli cells. In this case the column vector $E_m$ remains the same but the  $1-\mathrm{e}^{-(m+1)}$  in the continuity condition row is omitted.  The spectral element method considered in \cite{anders2023} (and implemented in \texttt{Dedalus} \cite{Burns2020}) is similar to the one discussed here with a Zernike discretisation for the disk cell and a Chebyhsev--Fourier series for the annulus cell. They utilise the test basis ${\bf C}^{(2)} / r^2$ in the radial direction (which is equivalent to multiplying the equation \cref{eq:helmholtz} by $r^2$ but allows for better automation) and a different tau term in the disk (in terms of the original trial basis $\tilde{\bf Z}^{(0)}$) that had better performance for the problems they considered. 
\end{remark}

\begin{algorithm}[ht]
\caption{Spectral element method}
\label{alg:spectral-element}
\begin{algorithmic}[1]
\State{\texttt{EXPAND.} Restrict $f(x,y)$ to the disk and annulus cells and expand in the bases $\tilde{\bf Z}^{(2)}(x,y)$ and ${\bf Z}^{\rho,(2,2)}(x,y)$ to find the coefficient vectors $\mathbf{f}_0$ and $\mathbf{f}_1$, respectively.}

The solve is then decomposed into a parallelisable sequence of one-dimensional solves:

\State{\texttt{ASSEMBLE.} Assemble and truncate $A_m$ as defined in \cref{eq:Am} to form a square matrix of size  $(2k+3) \times (2k+3)$ where $k = \ceil{\frac{N+1-m}{2}}$, $m \in \{0,1,\dots,N\}$, where $N$ is the truncation degree.}
\State{\texttt{EXTRACT.} Extract the coefficients from ${\bf f}_0$ and ${\bf f}_1$  associated with the $(m, j)$-mode to form the coefficient vector ${\bf f}_{m,j} \in \mathbb{R}^{2k}$, $m \in \{0,1,\dots, N\}$, $j \in \{0,1\}$, $(m,j) \neq (0,0)$.
}
\State{\texttt{SOLVE.} Solve (in parallel) for $m \in  \{0,1,\dots, N\}$, $j \in \{0,1\}$, $(m,j) \neq (0,0)$,
\begin{align}
A_m 
\begin{pmatrix}
{\bf u}_{m, j}\\
c_{0,m,j}
\end{pmatrix}
= 
\begin{pmatrix}
0&
0&
0&
{\bf f}_{m,j}^\top
\end{pmatrix}^\top.
\end{align}
}
\State{\texttt{INTERLACE.} Split and interlace ${\bf u}_{m, j}$ to form the coefficient vectors ${\bf u}_0$ and ${\bf u}_1$.}
\end{algorithmic}
\end{algorithm}

\section[PDEs in annuli]{Numerical examples}
\label{sec:examples}

\textbf{Code availability:} For reproducibility, an implementation of the optimal complexity algorithms for semiclassical Jacobi polynomials may be found in  \texttt{ClassicalOrthogonalPolynomials.jl} \cite{ClassicalPoly.jl2023} and \\ \texttt{SemiclassicalOrthogonalPolynomials.jl} \cite{SemiPoly.jl2023}. The Zernike annular polynomials are implemented in the package \texttt{AnnuliOrthogonalPolynomials.jl} \cite{AnnPoly.jl2023}. Their fast analysis and synthesis operators are implemented in \texttt{FastTransforms.jl} \cite{FastTransforms.jl2023}. Scripts to generate the plots and solutions of the numerical examples in this section may be found in \texttt{AnnuliPDEs.jl} \cite{annulipdes.jl} which has been archived on Zenodo \cite{annulipdes-zenodo}.

In the examples we consider the problem of solving PDEs in the two-dimensional annulus or disk, in particular the Poisson and Helmholtz equations with a homogeneous Dirichlet boundary condition.  We consider the sparse spectral methods: Chebyshev--Fourier series (via \cref{alg:scaled-and-shifted}), Zernike annular polynomials (via \cref{alg:2d-annuli}) and spectral element methods (via \cref{alg:spectral-element}). We emphasise that, for all the methods, the blocks decouple and one solves over each Fourier mode separately, reducing the two-dimensional solve to $2N+1$ one-dimensional solves, when truncating at the polynomial degree $N$. 


\textbf{Error measurement:} In all examples we measure the $\ell^\infty$-norm error of the approximation on each cell as evaluated on a heavily oversampled generalised Zernike annular grid \cref{eq:zernike-grid}.

\subsection{Forced Helmholtz equation}
\label{sec:forced-helmholtz}
In this example we consider the forced Helmholtz equation \cref{eq:helmholtz} on the annuli domains with inner radii $\rho = 0.2$, $0.5$, and $0.8$, respectively, with  a non-constant coefficient $\lambda(r) = 80^2 r^2$,  and the data $f(x,y) = \sin(100x)$. Due to the large positive choice of $\lambda$  (as $r\to1$),  the solution supports oscillations that are, in general, difficult to resolve.  The non-constant coefficient induces a bounded operator which is incorporated into the solver as discussed in \cref{rem:ncc}.  Moreover, the right-hand side features a high Fourier mode component. As the explicit solution is unavailable, we measure the errors against over-resolved approximations with coefficients that have long decayed to \texttt{Float64} machine precision.
 
We provide plots of the solutions in \cref{fig:forced-helmholtz} as well as the corresponding convergence plots of the Chebyshev--Fourier series (via \cref{alg:scaled-and-shifted}) and weighted Zernike annular discretisation (via \cref{alg:2d-annuli}) in \cref{fig:forced-helmholtz-convergence}.
\begin{figure}[h!]
\centering
\subfloat[$\rho=0.2$]{\includegraphics[width =0.32 \textwidth]{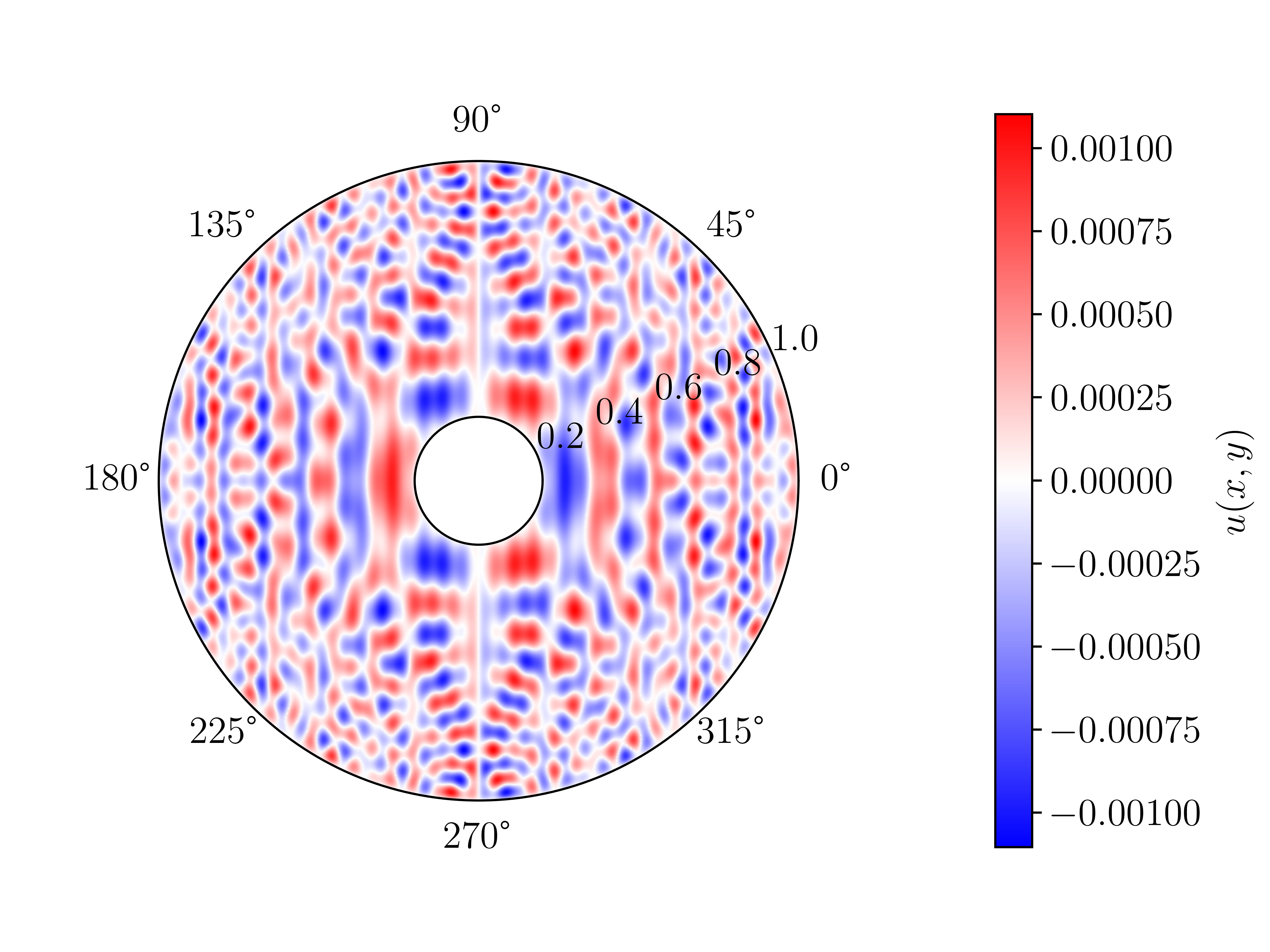}}
\subfloat[$\rho=0.5$]{\includegraphics[width =0.32 \textwidth]{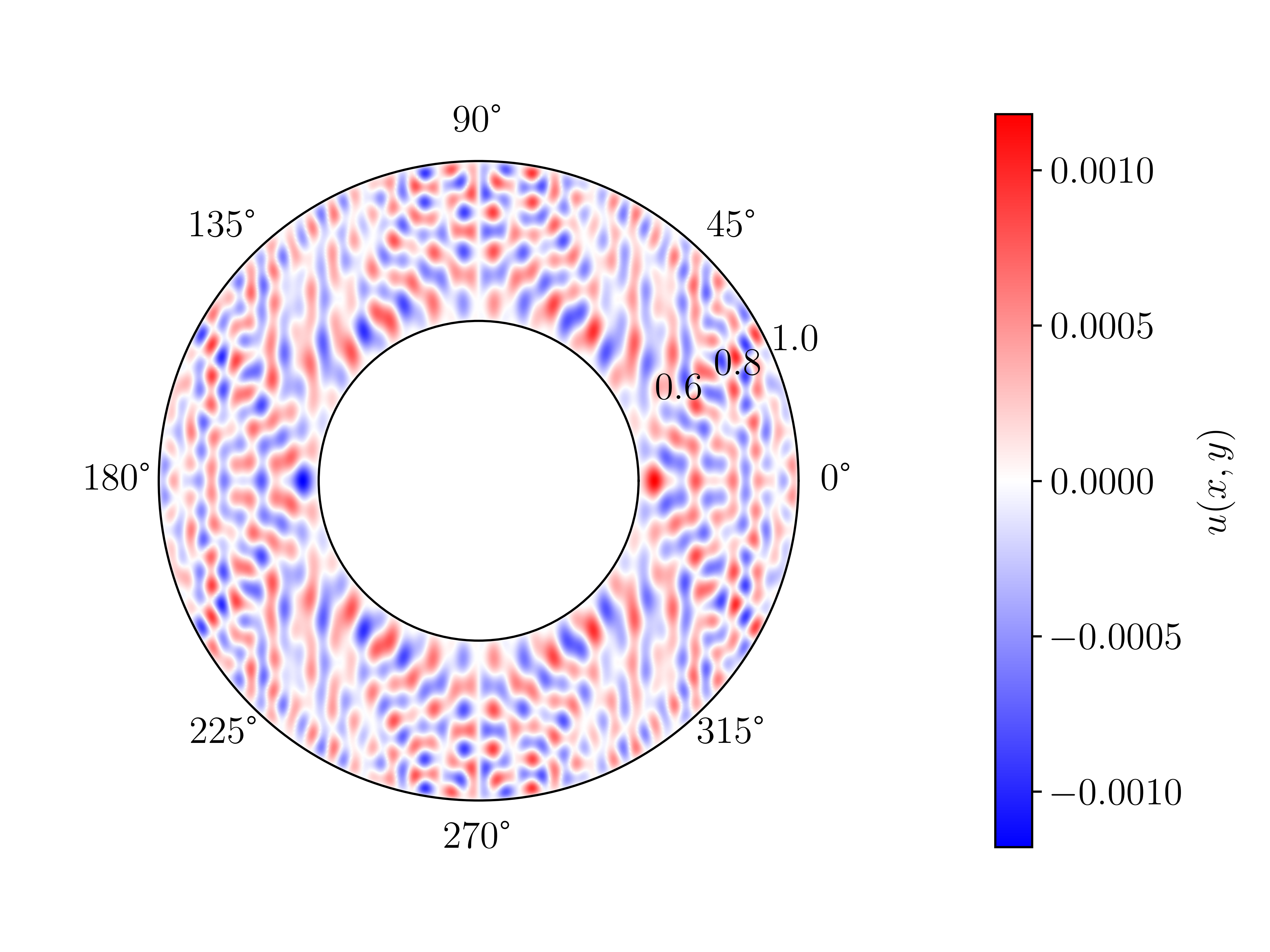}}
\subfloat[$\rho=0.8$]{\includegraphics[width =0.32 \textwidth]{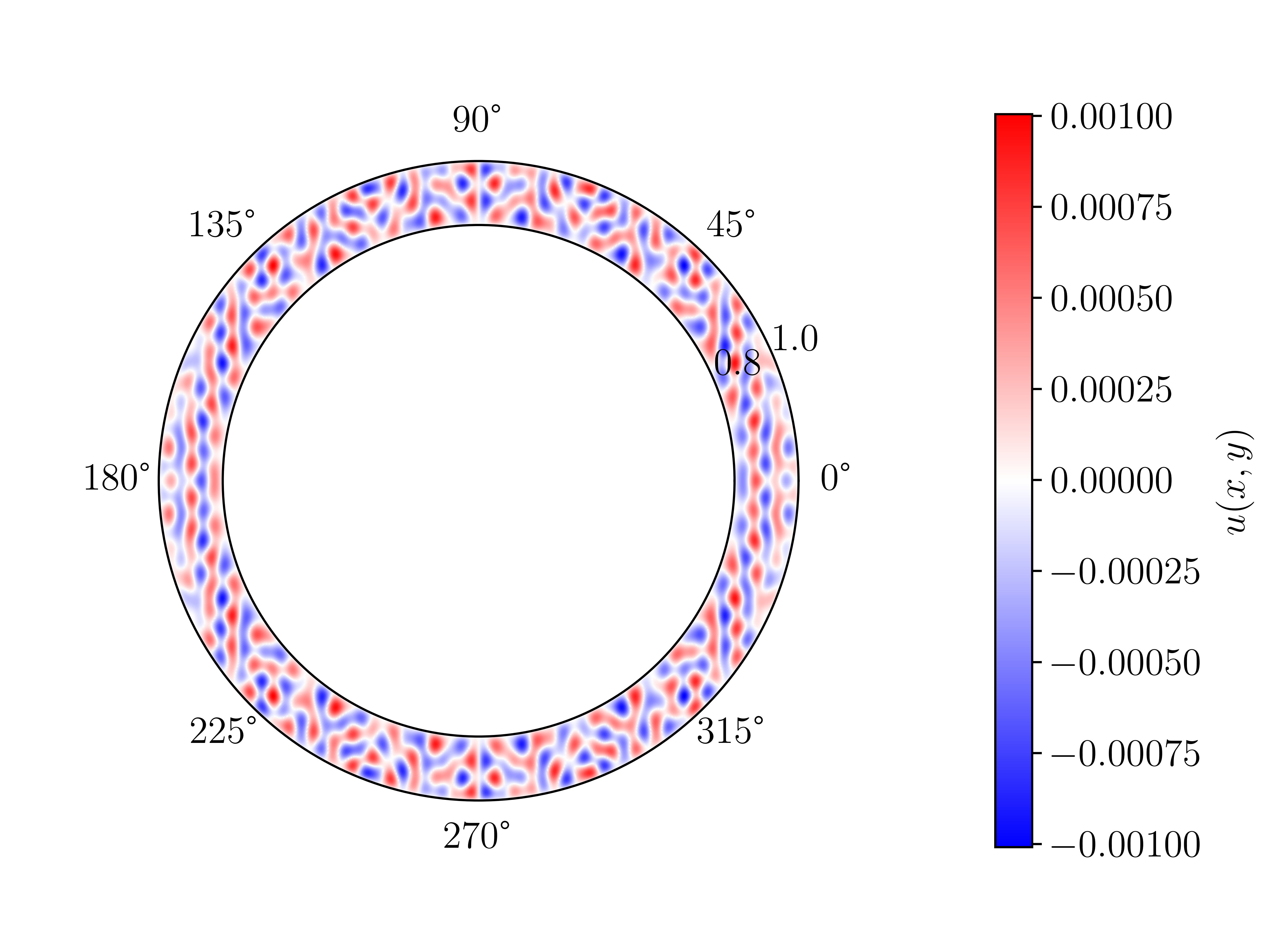}}
\caption{Plots of the solutions of \cref{eq:helmholtz} with  $\lambda(r) =80^2r^2$  and the right-hand side $f(x,y) = \sin(100x)$ on the annuli domains with inradii $\rho = 0.2$, $0.5$, and $0.8$, respectively (\cref{sec:forced-helmholtz}).}\label{fig:forced-helmholtz}
\end{figure}

\begin{figure}[h!]
\centering
\subfloat[$\rho=0.2$]{\includegraphics[width =0.32 \textwidth]{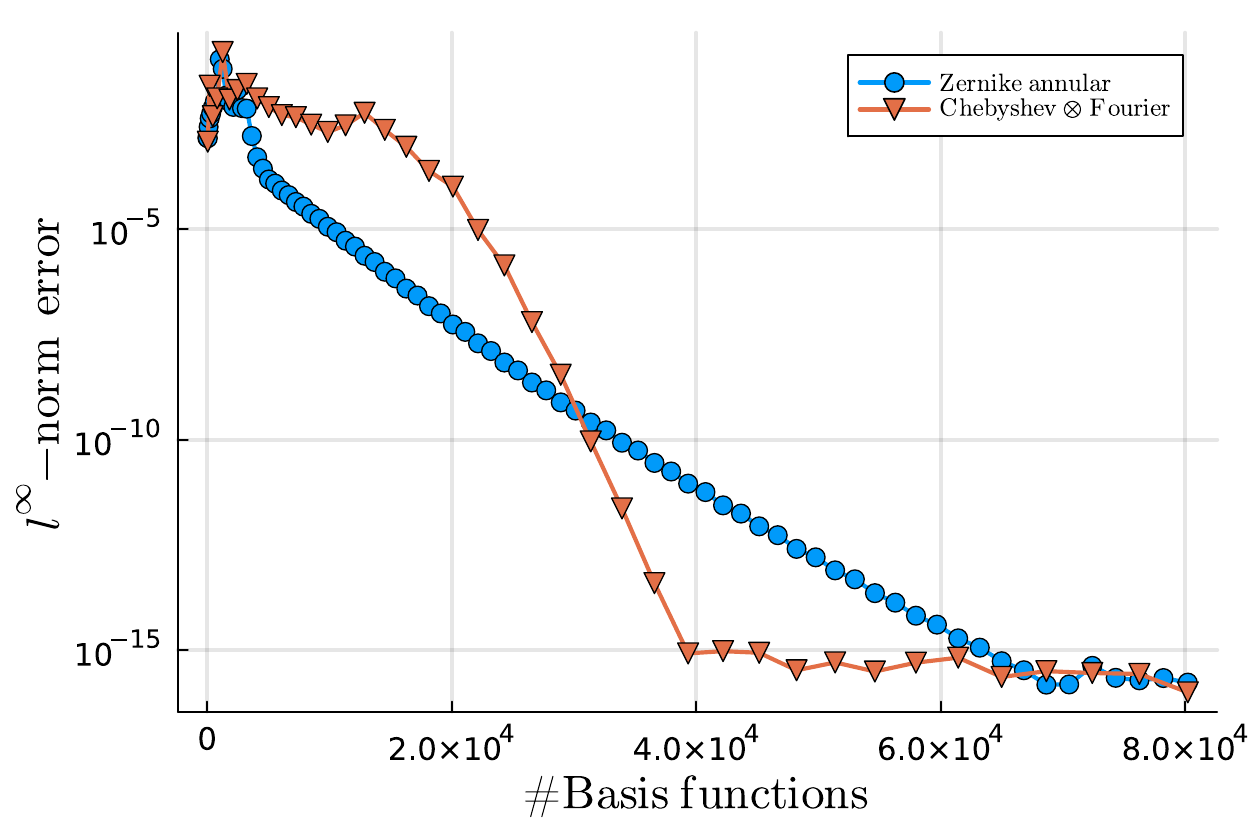}}
\subfloat[$\rho=0.5$]{\includegraphics[width =0.32 \textwidth]{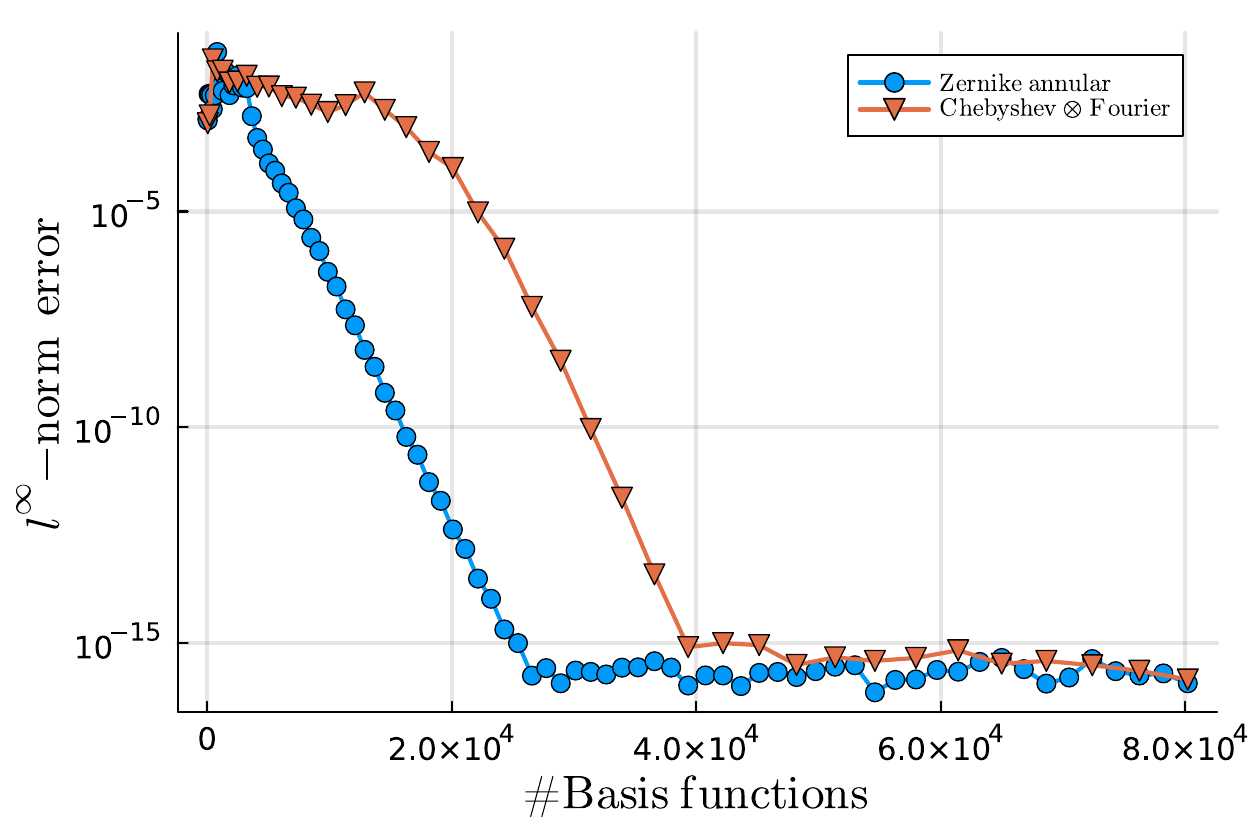}}
\subfloat[$\rho=0.8$]{\includegraphics[width =0.32 \textwidth]{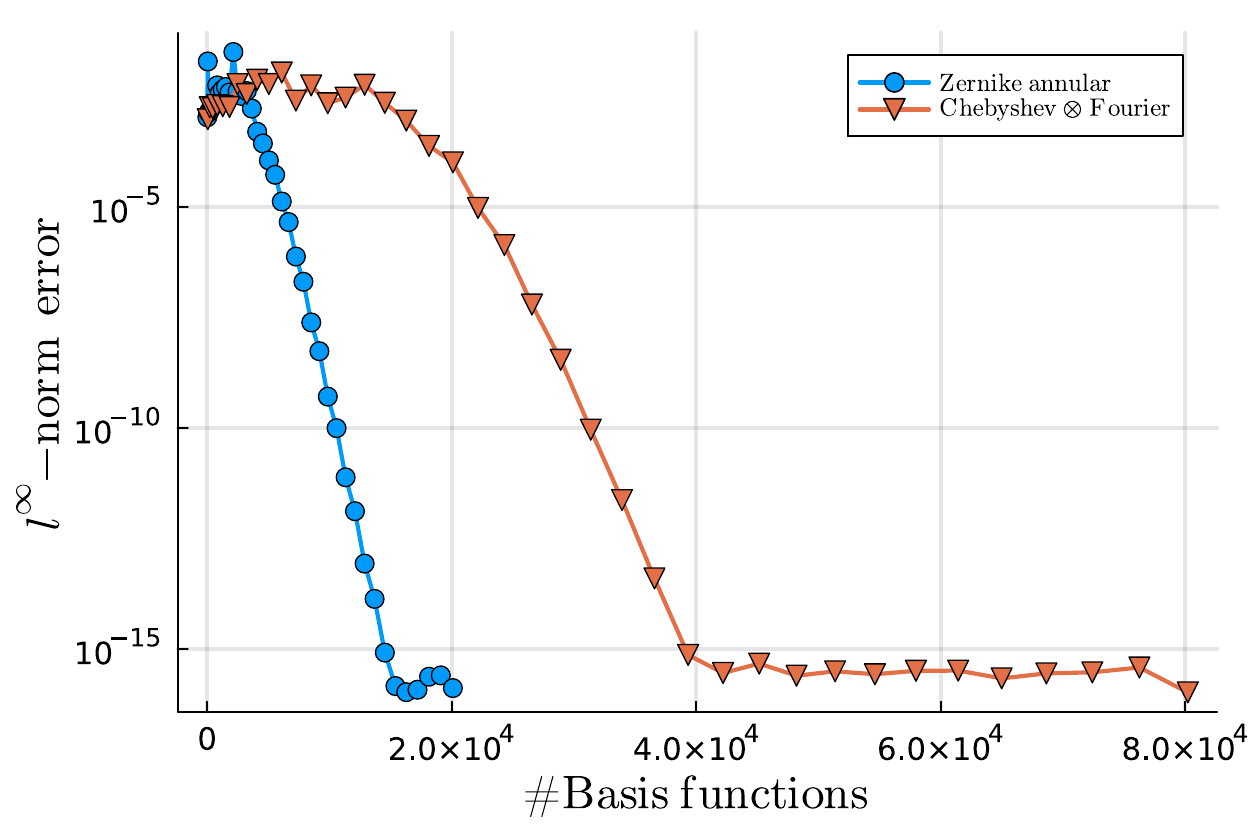}}
\caption{Convergence of the spectral methods for solving \cref{eq:helmholtz} with  $\lambda(r) =80^2r^2$  and the right-hand side $f(x,y) = \sin(100x)$ on the annuli domains with inradii $\rho = 0.2$, $0.5$, and $0.8$, respectively. The Zernike annular polynomial approximation of the solution converges slower than the Chebyshev--Fourier series analogue for $\rho=0.2$  but considerably faster when $\rho=0.5$ and $0.8$.}\label{fig:forced-helmholtz-convergence}
\end{figure}

The weighted Zernike annular discretisation converges much quicker than the Chebyshev--Fourier series when $\rho = 0.5$ and $0.8$ and considerably slower when $\rho=0.2$. In fact, the convergence profile of the Chebyshev--Fourier series is roughly independent of $\rho$, reaching \texttt{Float64} machine precision with  39,339 coefficients (truncation degree $N=139$)  on all three domains. In contrast, the convergence of the weighted Zernike annular method largely depends on $\rho$. When the the inradius is large, the Zernike annular discretisation requires fewer basis functions to resolve functions with large Fourier mode components and fluctuating behaviour near the outer boundary. The degradation in performance as $\rho \to 0$ is likely caused by the proximity of the inner boundary to the logarithmic singularity of the solutions at the origin (outside of the domain). The cause of this is due to the quadratic variable transformation in the radial direction for the Zernike annular discretisation, while the Chebyshev--Fourier series uses an affine variable transformation. The former brings the logarithmic singularity at the origin much closer in the Bernstein ellipse metric than the latter, cf.~\cite[Fig.~4]{Boyd2011}. 

We conclude by noting that, in practical examples, the Zernike annular polynomials are likely to be used as part of a spectral element method (such as in Sections \labelcref{sec:disc-data} and \labelcref{sec:example:disc-ann}). Thus typically $0 \ll \rho < 1$ as it is highly unlikely one would utilise annuli cells with a very small inner radius relative to the outer boundary.

\subsection{Poisson equation with a modified Gaussian bump}

Consider \cref{eq:helmholtz} with $\lambda = 0$, $\rho = 0.2$, and, for some coefficients $a \in \mathbb{R}$, $b,c \in [\rho, 1]$,
\begin{align}
f(x,y) = -4a\E^{-a((x-b)^2 + (y-c)^2)}(-a(b^2 - 2bx + c^2 - 2cy + x^2 + y^2) + 1).
\label{eq:manufactured-gaussian}
\end{align}
The right-hand side $f$ is a smooth function and emulates a modified Gaussian bump. For sufficiently positive $a \gg 0$, the solution is approximately $u(x,y) \approx \E^{-a((x-b)^2 + (y-c)^2)}$. In \cref{fig:manufactured-gaussian-bump} we plot the solution as well as the convergence plot of the two spectral methods against the number of basis functions utilised in the discretisation. We see that the weighted Zernike annular polynomial discretisation converges the quickest.

\begin{figure}[h!]
\centering
\includegraphics[width =0.49 \textwidth]{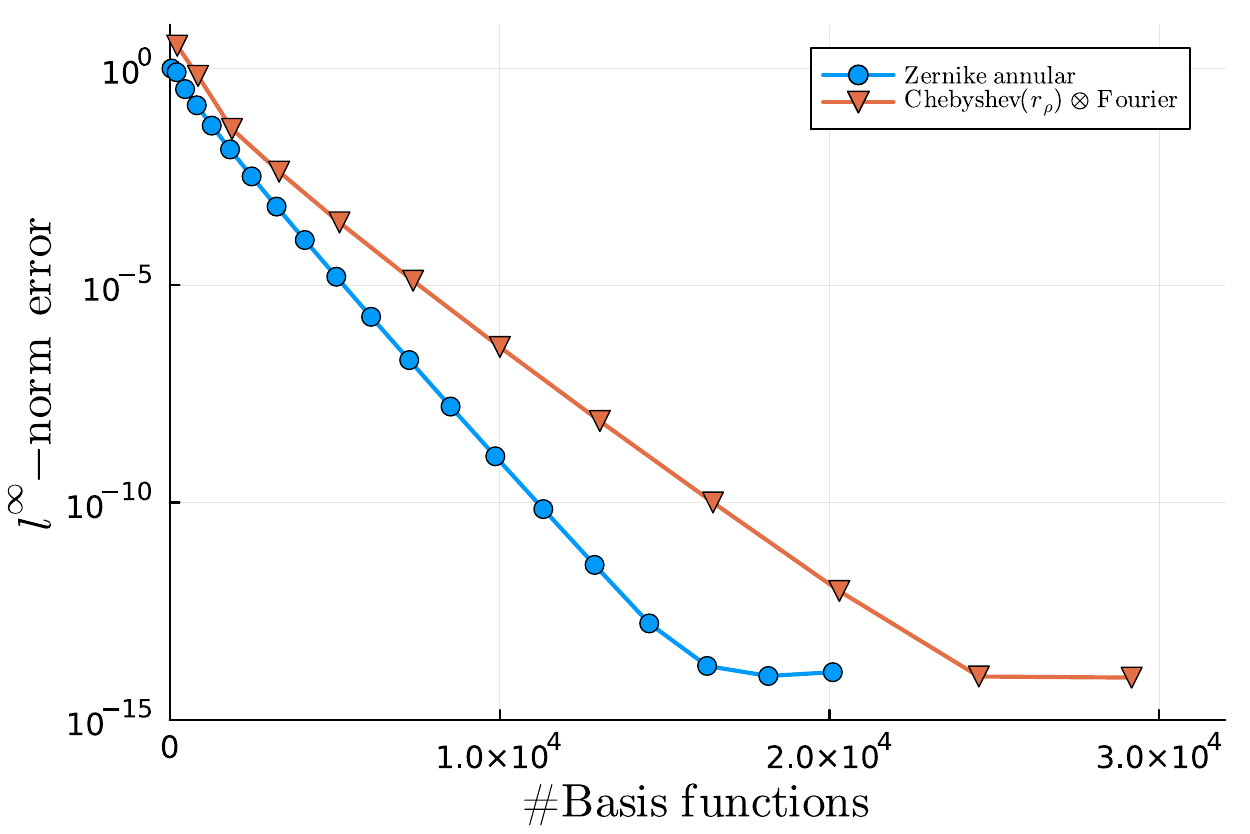}
\includegraphics[width =0.49 \textwidth]{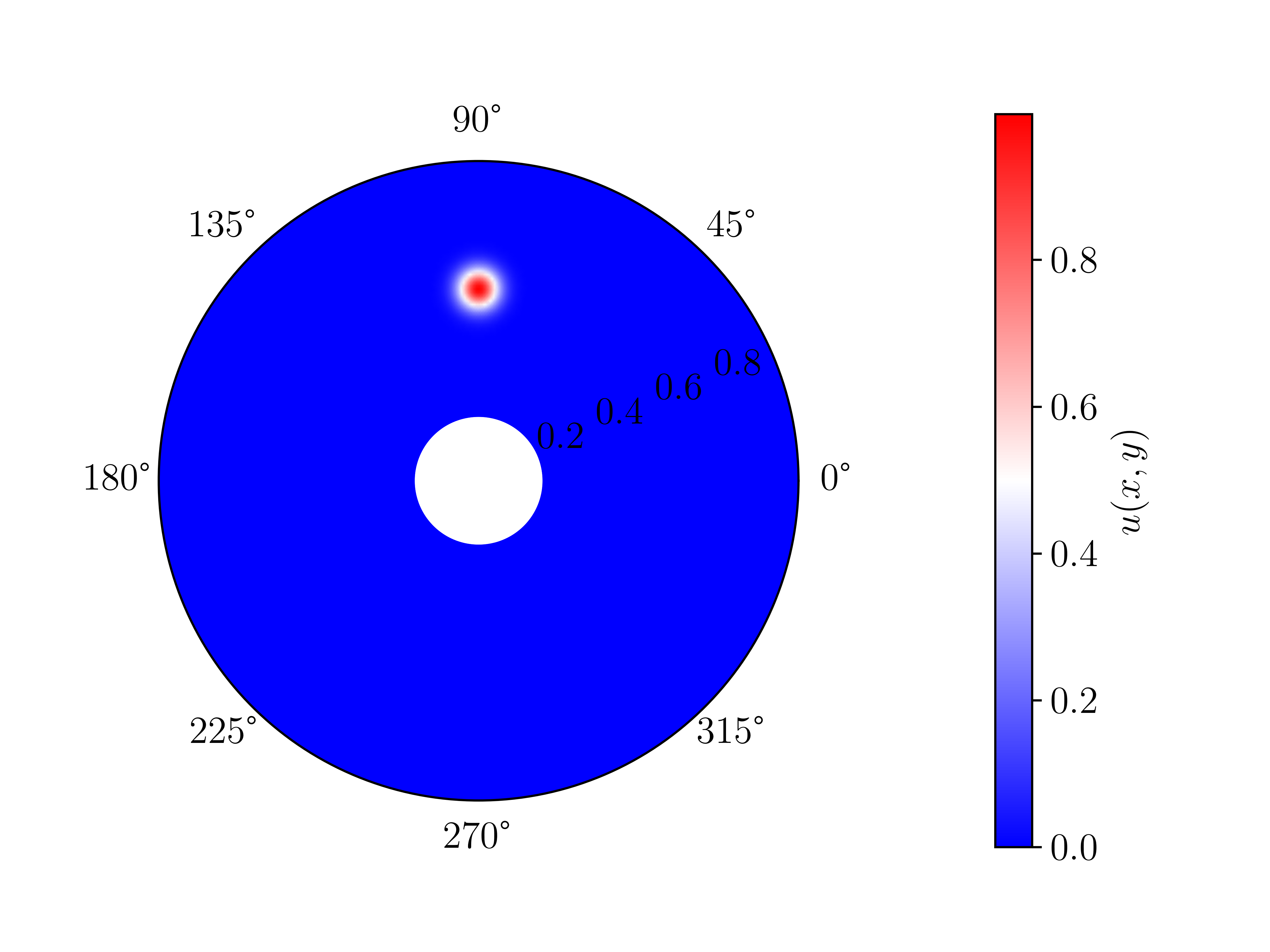}
\caption{Convergence of the spectral methods for solving \cref{eq:helmholtz} with $\lambda =0$, $\rho = 0.2$, and the right-hand side \cref{eq:manufactured-gaussian} with $a = 250$, $b=0$, and $c=0.6$ (left). Plot of the solution (right). The Zernike annular polynomial approximation of the solution, with a smooth right-hand side, converges  faster than the Chebyshev--Fourier series analogue.}\label{fig:manufactured-gaussian-bump}
\end{figure}

\subsection{Discontinuous variable coefficients and data on a disk}
\label{sec:disc-data}
Consider a variable-coefficient Helmholtz equation \cref{eq:helmholtz} such that the coefficient $\lambda(r)$ and the right-hand side have discontinuities in the radial direction. In this example we show that, although a traditional spectral method struggles to resolve the jumps, the spectral element method of \cref{sec:spectral-element} performs particularly well. Let the domain be the unit disk $\Omega_0$. Define the jump function $\kappa(r)$, $\kappa_0, \kappa_1 \in \mathbb{R}$,
\[
\kappa(r) \coloneqq
\begin{cases}
\kappa_0 & \text{if} \; 0 \leq r < \rho,\\
\kappa_1 & \text{if} \; \rho \leq r \leq 1.
\end{cases}
\]
Consider the continuous solution:
\begin{align}
\begin{split}
&u(x,y) = \left( \sum_{i=1}^5 d_i \E^{-a_i((x-b_i)^2 + (y-c_i)^2)} \right)\\
&\indent\times\begin{cases}
\kappa_0r^2/4 + (\kappa_1 - \kappa_0)\rho^2/4 - \kappa_1/4 + (\kappa_0 - \kappa_1)\rho^2\log(\rho)/2 & \text{if} \; 0 \leq r \leq \rho, \\
 \kappa_1r^2/4 - \kappa_1/4 + (\kappa_0 - \kappa_1)\rho^2\log(r)/2 & \text{if} \; \rho < r \leq 1.
\end{cases}
\end{split} \label{eq:u-element}
\end{align}
Throughout this example we fix $d_i = 1$ and $a_i = -10i$ for all $i$. Moreover, for $i \in \{1,2,3,4\}$, $(b_i,c_i) = (\rho\cos(\theta_i), \rho\sin(\theta_i))$ where $\theta_i = 0, \pi/2, \pi/3, 5\pi/4$, respectively, and $(b_5,c_5) = (0.9\cos(3\pi/4), 0.9\sin(3\pi/4))$.  Furthermore, $\kappa(r) = \kappa_0 = 100$ if $0\leq r \leq \rho$ and $\kappa(r) = \kappa_1 = 1$ if $\rho <r\leq 1$ with $\rho = 1/2$. We consider two equations, with the homogeneous Dirichlet boundary condition $u|_{\partial \Omega_0} = 0$,
\begin{align}
\Delta u(x,y) &= f_1(x,y), \label{eq:spectral-poisson} \\
(\Delta + \kappa(r)) u(x,y) &= f_2(x,y). \label{eq:spectral-helmholtz}
\end{align}
The first equation \cref{eq:spectral-poisson} is the Poisson equation posed on the unit disk with the right-hand side
\begin{align}
\begin{split}
f_1(x,y) 
&= \sum_{i=1}^5 d_i \E^{-a_i((x-b_i)^2 + (y-c_i)^2)}\\
& \indent \times \left(-4a_i(-a_i(b_i^2 - 2b_ix + c_i^2 - 2c_iy + x^2 + y^2) + 1) + \kappa(r) \right).
\end{split}
\label{eq:f-element1}
\end{align}
Note that $f_1(x,y)$ has discontinuities in the radial direction at $r=1/2$. The second equation \cref{eq:spectral-helmholtz} is a variable-coefficient Helmholtz equation where the Helmholtz coefficient is $\lambda(r) = \kappa(r)$. Hence, the coefficients of the equation also contain jumps in the radial direction. The corresponding right-hand side is
\begin{align}
f_2(x,y) =  f_1(x,y) + \kappa(r) u(x,y).
\label{eq:f-element2}
\end{align}

In \cref{fig:poisson-conditioning-2-element} we plot the conditioning and size of the Laplacian matrices for increasing Fourier mode $m$ with a truncation degree $N=100$. We observe that the two-element method of Zernike and Zernike annular polynomials has, on average, better conditioning and smaller matrix size. The discontinuous right-hand sides $f_1$ and $f_2$, together with the continuous solution \cref{eq:u-element}, are those provided in \cref{sec:introduction} in \cref{fig:spectral-element-plots}.  In \cref{fig:spectral-element-convergence} we plot the convergence of various spectral methods.  For the Poisson equation \cref{eq:spectral-poisson} we compare the spectral methods: (1) a one-element weighted Zernike spectral method (\cref{alg:2d-annuli}), (2) a two-element method Zernike (annular) method (\cref{alg:spectral-element}) and (3) a two-element method via a Zernike discretisation on the disk cell and a Chebyshev--Fourier series on the outer annulus cell. In the two-element methods, the inner cell is the disk $\{ 0 \leq r \leq 1/2\}$ and the outer cell is the annulus $\{ 1/2\leq r \leq 1\}$. For the variable-coefficient Helmholtz equation we only plot the convergence of (2) and (3). A one-element method would require resolving the function $\kappa(r)$ over the whole disk, requiring high-degree polynomials, and would result in systems with large bandwidths leading to impractical methods. Whereas in the spectral element method, $\kappa(r)$ is a constant in each cell and the problem effectively reduces to a non-variable Helmholtz problem on each cell. We avoid any strategy that utilises a Chebyshev--Fourier series discretisation in a disk cell due to well-studied issues at the origin \cite{Boyd2011, VasilDisk, Wilber2017}.

The two-element Zernike (annular) method of \cref{alg:spectral-element} performs the best. This is followed by the two-element Zernike--Chebyshev--Fourier series method. The one-element method struggles to reduce the accuracy below $\mathcal{O}(10^{-4})$.
\begin{figure}[h!]
\centering
\subfloat[Condition number.]{\includegraphics[width =0.35\textwidth]{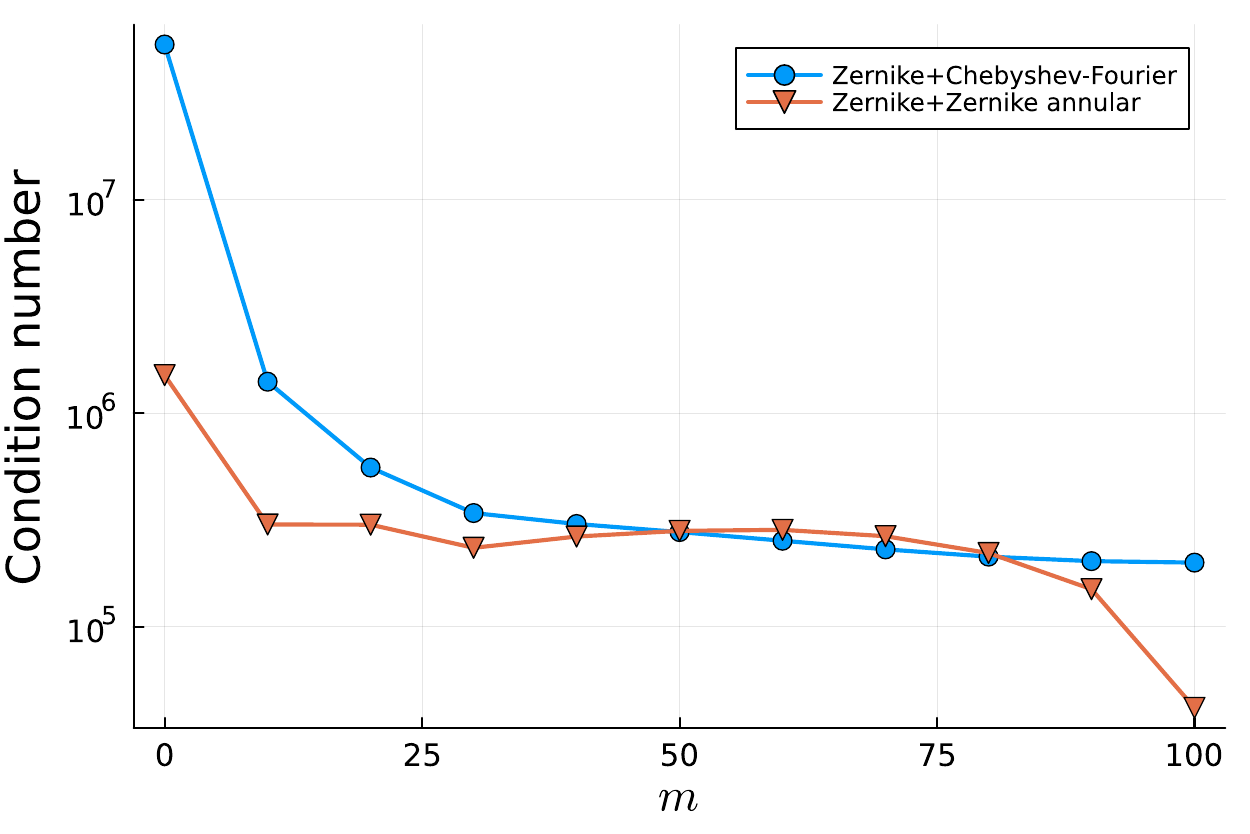}}
\subfloat[Matrix size.]{\includegraphics[width =0.35\textwidth]{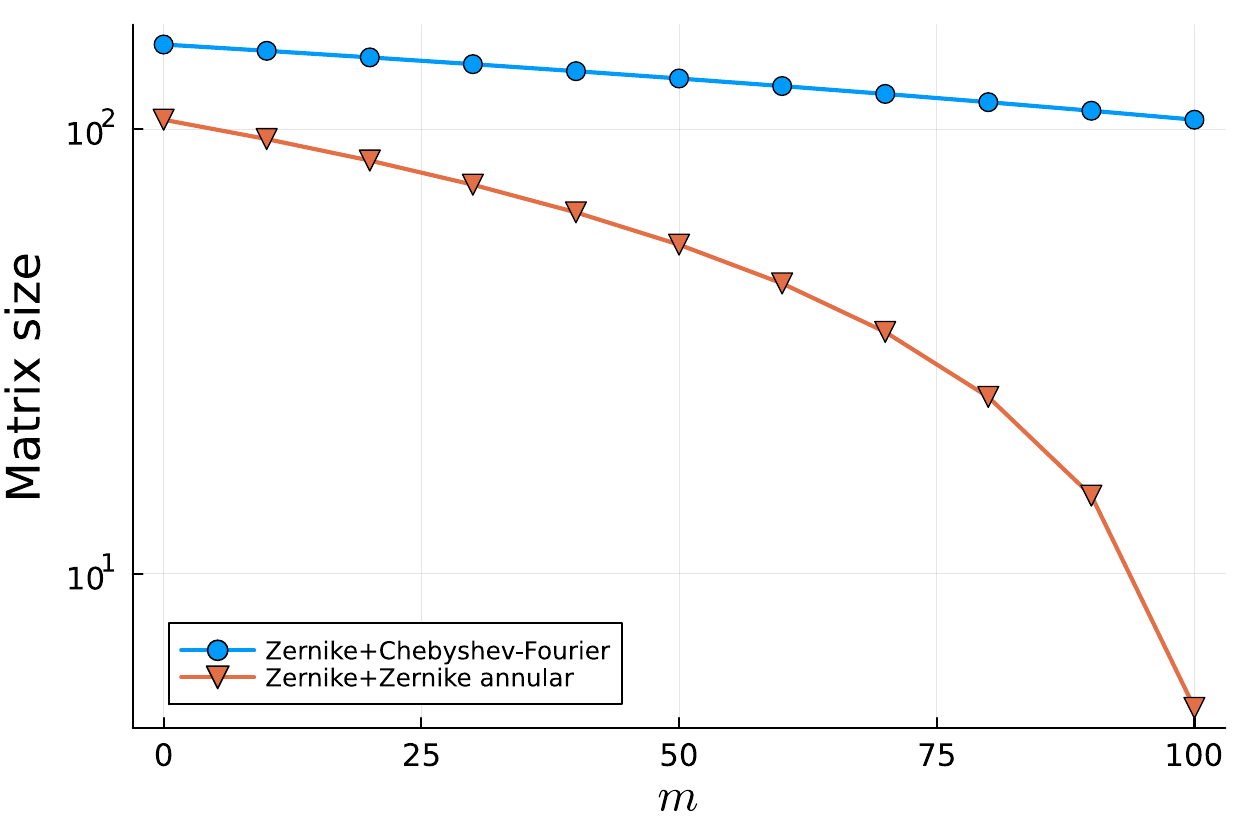}}
\caption{The condition number and matrix size of the Laplacian matrix, for increasing Fourier mode $m$, of the two-element methods with a Zernike discretisation on the disk cell and either a Chebyshev--Fourier series or a Zernike annular discretisation on the annulus cell (inner radius $\rho=1/2$) on the domain $\Omega_{0}$ with truncation degree $N=100$. We observe that the Laplacian matrices for the two-element Zernike (annular) polynomials is better conditioned, on average, and smaller in size.}\label{fig:poisson-conditioning-2-element}
\end{figure}

\begin{figure}[h!]
\centering
\subfloat[Convergence of \cref{eq:spectral-poisson}.]{\includegraphics[width =0.49 \textwidth]{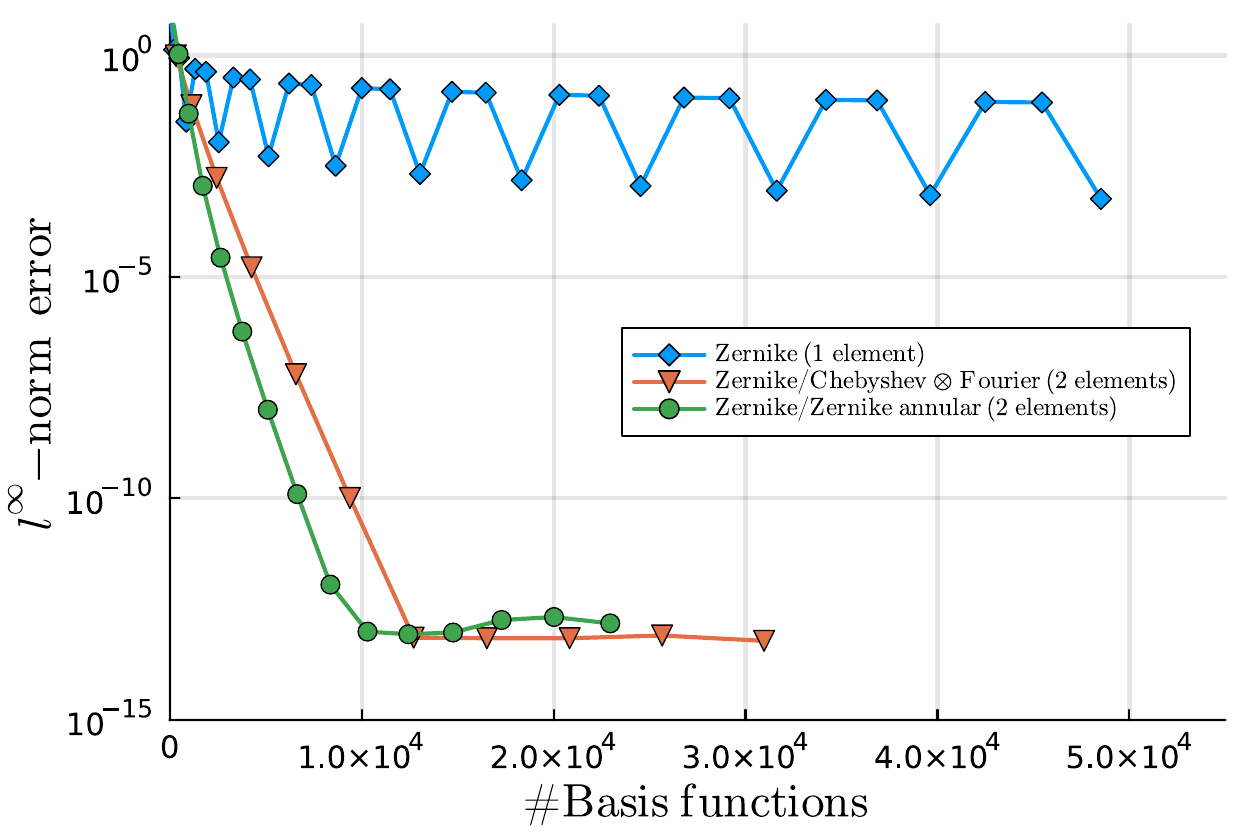}}
\subfloat[Convergence of \cref{eq:spectral-helmholtz}.]{\includegraphics[width =0.49 \textwidth]{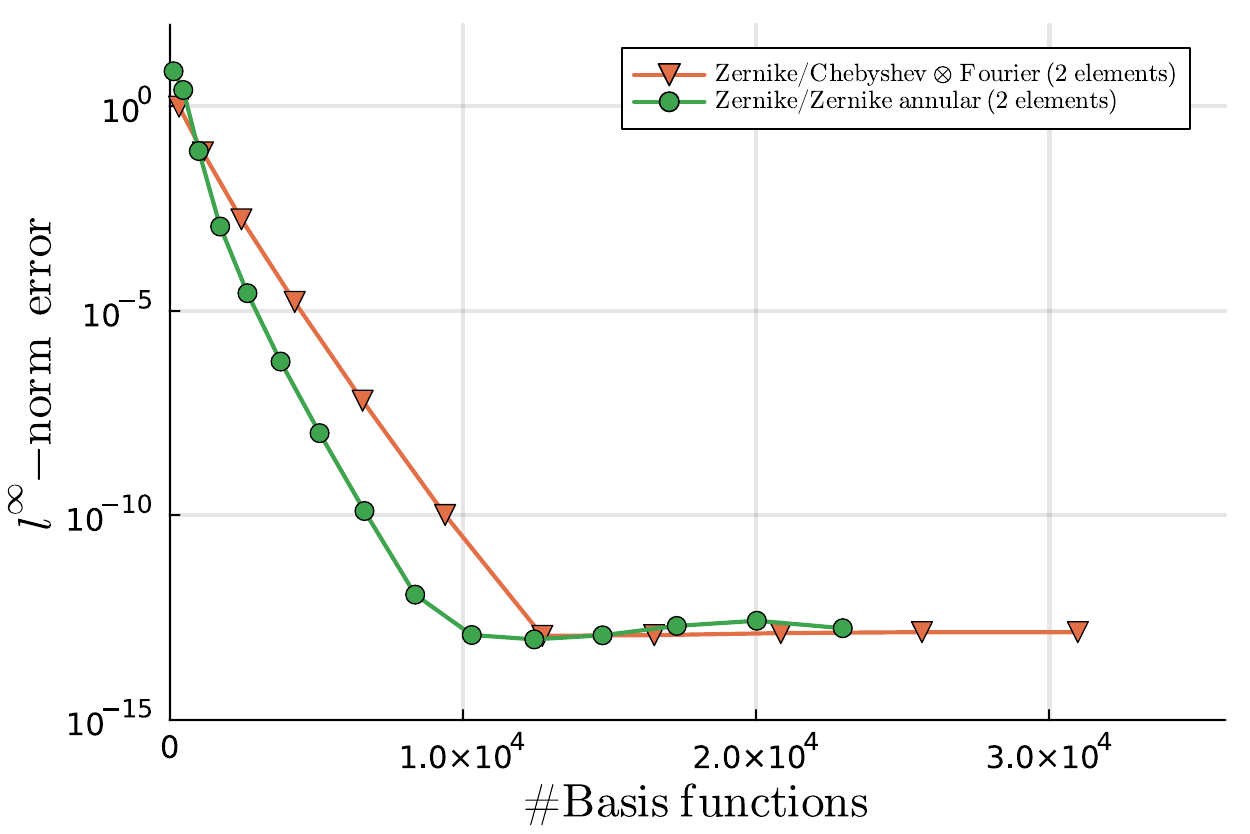}}
\caption{Convergence of the spectral methods for solving \cref{eq:spectral-poisson} and \cref{eq:spectral-helmholtz}. For both right-hand sides, the two-element methods with the boundary of the cells at the radial discontinuity performs significantly better than the one-element analogue. Moreover, the Zernike (annular) two-element methods performs the best overall.}\label{fig:spectral-element-convergence}
\end{figure}

\subsection{Discontinuous variable coefficients and data on an annulus}
\label{sec:example:disc-ann}
We study a similar problem to the one described in \cref{sec:disc-data} but on the annulus domain $\Omega_{1/10}$. We consider the Helmholtz equation \cref{eq:helmholtz} with the right-hand side \cref{eq:f-element2}. The parameters are $\rho = 1/10$, $\alpha = 1/2$, and for $i \in \{1,2,3,4\}$, $d_i = 1$, $a_i = -10i$, $(b_i,c_i) = (\alpha \cos(\theta_i), \alpha\sin(\theta_i))$ where $\theta_i = 0, \pi/2, \pi/3, 5\pi/4$, respectively. For $i=5$ we pick $(d_5, a_5, b_5,c_5) = (10, 80, -0.95, 0)$. Finally $\lambda(r) = \kappa(r)$ where $\kappa(r) = \kappa_0 = 100$ if $\rho \leq r \leq \alpha$ and $\kappa(r) = \kappa_1 = 1$ if $\alpha <r \leq 1$. The right-hand side has a radial discontinuity at $r=1/2$.

We compare three spectral methods: (1) a two-element method Zernike annular method, (2) a two-element method via a Chebyshev--Fourier series on the inner annulus cell and a Zernike annular discretisation on the outer annulus cell and (3) a two-element Chebyshev--Fourier series. The inner cell is the annulus $\{ 1/10 \leq r \leq 1/2\}$ and the outer cell is the annulus $\{ 1/2\leq r \leq 1\}$. The discontinuous right-hand, the continuous solution and the convergence plots are depicted in \cref{fig:annulus-element-convergence}. The quickest convergence is achieved by (1) followed by (2) and then (3).

\begin{figure}[h!]
\centering
\subfloat[Right-hand side.]{\includegraphics[width =0.32 \textwidth]{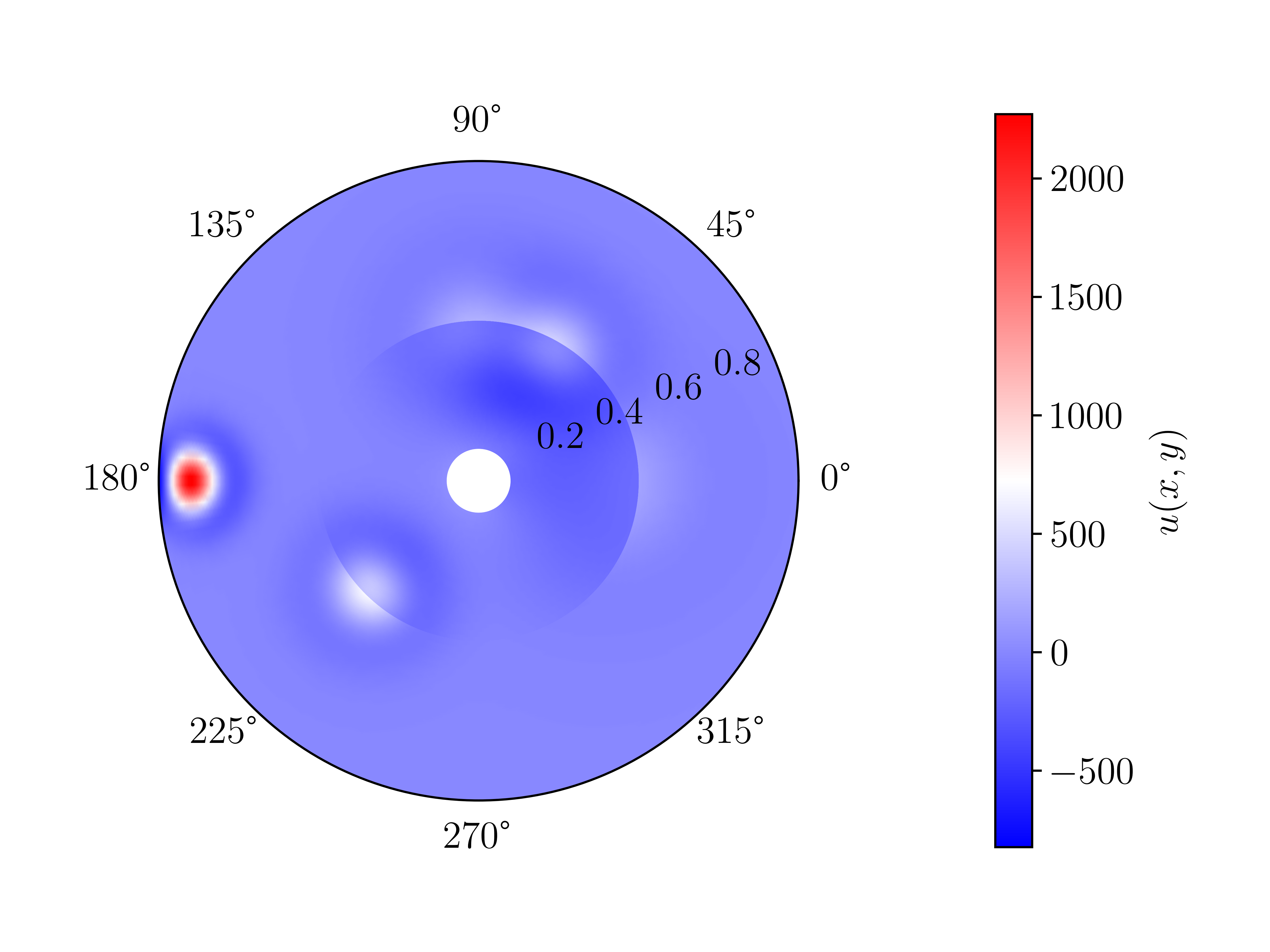}}
\subfloat[Solution.]{\includegraphics[width =0.32 \textwidth]{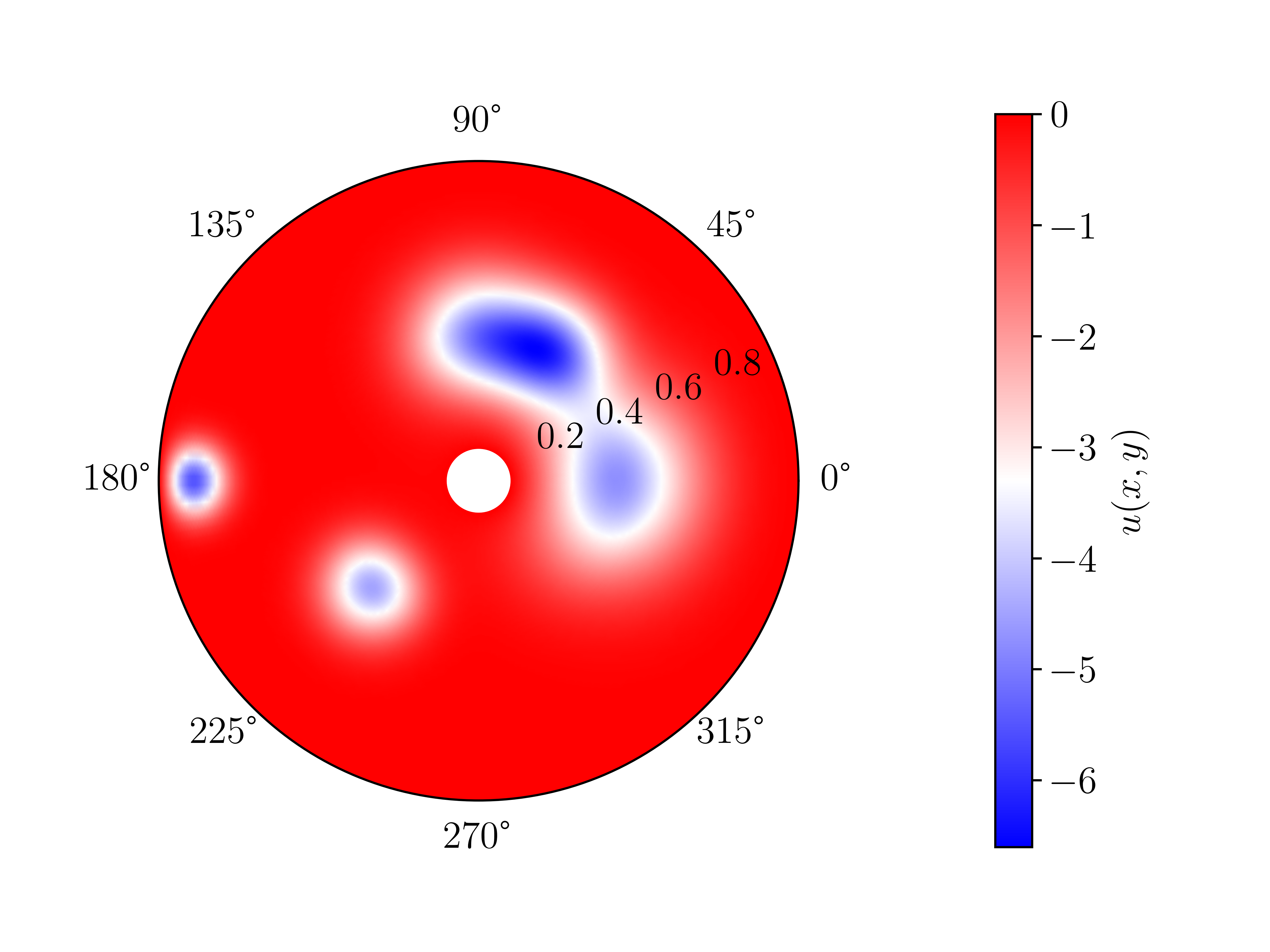}}
\subfloat[Convergence.]{\includegraphics[width =0.32 \textwidth]{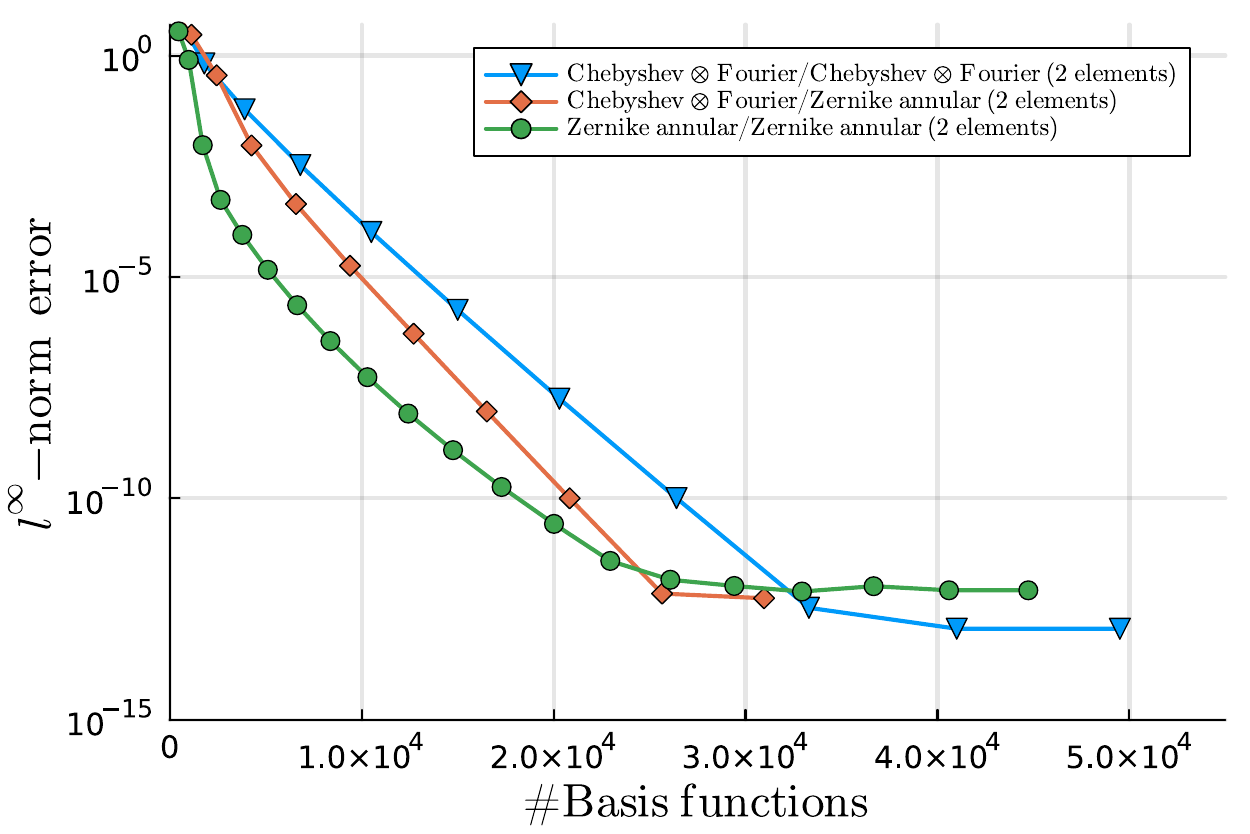}}
\caption{Convergence of the spectral methods for solving the example in \cref{sec:example:disc-ann}. The domain is the annulus with inner radius $\rho=1/10$. The right-hand side has a radial discontinuity at $r=1/2$. The mesh consists of two annuli cells which meet at $r=1/2$. The Zernike annular two-element method performs the best overall, followed by the Chebyshev--Fourier/Zernike annular method and lastly the Chebyshev--Fourier discretisation on both cells.}\label{fig:annulus-element-convergence}
\end{figure}

\section{Conclusions}
\label{sec:conclusions}

In this work we detailed an optimal complexity algorithm for computing connection and differentiation matrices as well a quasi-optimal complexity algorithm for the analysis and synthesis operators of hierarchies of semiclassical Jacobi polynomials. These allowed us to develop similar optimal complexity computations for the generalised Zernike annular polynomials. With the generalised polynomials, we construct several sparse spectral methods for solving PDEs posed on the annulus and disk. In particular we focused on the scaled-and-shifted Chebyshev--Fourier series and the Zernike annular polynomials. Akin to similar observations by Boyd and Yu \cite{Boyd2011} we observed that, for problems that feature a high Fourier mode, the Zernike annular polynomials often converge faster. A key note is that the Helmholtz operator discretised with generalised Zernike annular polynomial results in matrices of bandwidth five as opposed to the bandwidth of nine given by the scaled-and-shifted Chebyshev--Fourier series. We also constructed a spectral element method for the disk and annulus where the cells are an inner disk (omitted if the domain is an annulus) and subsequent concentric annuli of varying thickness. We used this spectral element method to solve the Helmholtz equation with a right-hand side and a variable coefficient with discontinuities in the radial direction. The spectral element method converged quickly to machine precision whereas one cell counterparts did not reduce the error below $\mathcal{O}(10^{-4})$ for the truncation degrees considered. Moreover, the Laplacian has better conditioning when a Zernike annular basis is used.

Unlike the Chebyshev--Fourier series, generalised Zernike annular polynomials may be used to discretise the Helmholtz equation in weak form resulting in a banded and sparse discretisation that preserves symmetry. Constructing a sparse spectral element method that utilises this approach will the subject of future work. We conclude by noting that the definition of the Zernike annular polynomials as given in this work naturally extends to three dimensions. This is achieved by simply considering the three-dimensional spherical harmonics in \cref{def:def:2Dannuli}. Hence, it is possible to construct sparse spectral methods for spherical shells in three dimensions.

\section*{Acknowledgments}
We are grateful to Tom H.~Koornwinder for providing us with previous literature on Zernike annular polynomials,  to Keaton Burns for discussions on tau methods, and to James Bremer for the discussion concerning solvers for radially symmetric potentials. 

\bibliographystyle{siamplain}
\bibliography{references}
\end{document}